\renewcommand{\email}[2][]{%
	\ifx\emails\@empty\relax\else{\g@addto@macro\emails{,\space}}\fi%
	\@ifnotempty{#1}{\g@addto@macro\emails{\textrm{(#1)}\space}}%
	\g@addto@macro\emails{#2}%
}
\newtheorem{theorem}{Theorem}[section]
\newtheorem{corollary}[theorem]{Corollary}
\newtheorem{lemma}[theorem]{Lemma}
\theoremstyle{definition}
\numberwithin{equation}{section}
\newcommand{\R}{{\mathbb R}}
\begin{document}
\title[Nonlinear Kirchhoff equation with critical combined powers nonlinearity]
{Asymptotic profiles for a nonlinear Kirchhoff equation with combined powers nonlinearity}
\author{Shiwang Ma}\email{shiwangm@nankai.edu.cn}
\address{School of Mathematical Sciences and LPMC, Nankai University\\ 
	Tianjin 300071, China}
\author{Vitaly Moroz}\email{v.moroz@swansea.ac.uk}
\address{Department of Mathematics, Swansea University\\ 
	Fabian Way, Swansea SA1 8EN, 	Wales, UK}


\keywords{Nonlinear Kirchhoff equation; mass $L^2$-critical exponent;  critical Sobolev exponent; concentration compactness; asymptotic behavior, normalized solutions}

\subjclass[2010]{Primary 35J60; Secondary 35B25, 35B40}

\date{\today}

\begin{abstract}
We study asymptotic behavior of positive ground state solutions of the nonlinear Kirchhoff equation
\begin{equation*}
-\big(a+b\int_{\mathbb R^N}|\nabla u|^2\big)\Delta u+ \lambda u= u^{q-1}+ u^{p-1} 
\quad {\rm in} \ \mathbb R^N,
 \eqno(P_\lambda)
\end{equation*}
as $\lambda\to 0$ and $\lambda\to +\infty$, where $N=3$ or $N= 4$, $2<q\le p\le 2^*$, $2^*=\frac{2N}{N-2}$ is the Sobolev critical exponent,  $a>0$, $b\ge 0$ are constants and 
$\lambda>0$ is  a parameter.  
In particular, we prove that in the case $2<q<p=2^*$,  as $\lambda\to 0$, after {\em a suitable  rescaling} the ground state solutions of $(P_\lambda)$ converge  to the unique positive solution of the equation 
$-\Delta u+u=u^{q-1}$  and as $\lambda\to +\infty$, after {\em another rescaling} the ground state solutions of $(P_\lambda)$ converge to a particular solution of the critical Emden-Fowler equation $-\Delta u=u^{2^*-1}$. We establish a sharp asymptotic characterisation of such rescalings, which depends in a non-trivial way on the space dimension $N=3$ and  $N= 4$.  We also discuss a connection of our results with a mass constrained problem associated to $(P_\lambda)$ with normalization constraint $\int_{\mathbb R^N}|u|^2=c^2$. As a consequence of the main results, we obtain the existence, non-existence  and asymptotic behavior of positive normalized solutions of such a problem. In particular, we obtain the exact number and their precise asymptotic expressions of normalized 
solutions if  $c>0$ is sufficiently large or sufficiently small. Our results also show that in the space dimension $N=3$, there is a striking difference between the cases $b=0$ and $b\not=0$. More precisely, if $b\not=0$, then both $p_0:= 10/3$ and $p_b:= 14/3$ play a role  in the existence, non-existence, the exact number  and  asymptotic behavior of the normalized solutions of the mass constrained problem, which is completely different from those
for the corresponding nonlinear Schr\"odinger equation and which reveals the special influence of the nonlocal term.

  \end{abstract}

\maketitle

\newpage
\section{Introduction and notations}

We consider the following Kirchhoff equation
\begin{equation*}
-\big(a+b\int_{\mathbb R^N}|\nabla u|^2\big)\Delta u+ \lambda u= |u|^{q-2}u+ |u|^{p-2}u 
\quad\text{in $\R^N$},\eqno{(P_\lambda)}
\end{equation*}
where $N\ge 3$,  $2<q\le p\le 2^*=\frac{2N}{N-2}$, $a>0$, $b\ge 0$ and $\lambda>0$ are parameters. 
For a fixed $\lambda>0$, the corresponding to $(P_\lambda)$ action functional is given by
\begin{equation}\label{e11}
I_{\lambda}(u)=\frac{a}{2}\int_{\mathbb R^N}|\nabla u|^2+\frac{\lambda}{2}\int_{\mathbb R^N}|u|^2+\frac{b}{4}\Big(\int_{\mathbb R^N}|\nabla u|^2\Big)^2-\frac{1}{q}\int_{\mathbb R^N}|u|^q-\frac{1}{2^*}\int_{\mathbb R^N}|u|^{2^*},
\end{equation}
and critical points of $I_\lambda$ in $H^1(\R^N)$ correspond to  solutions of $(P_{\lambda})$.  By a ground state solution of $(P_{\lambda})$ we understand a solution  $u_\lambda\in H^1(\mathbb  R^N)$ such that $I_{\lambda}(u_\lambda)\le I_{\lambda}(u)$ for every nontrivial solution $u$ of $(P_{\lambda})$.
 
In this paper we are interested in the limit asymptotic profile of the ground sates $u_\lambda$ of the  problem $(P_{\lambda})$, and  in the precise  asymptotic behavior of different norms of $u_\lambda$, as $\lambda\to 0$ and $\lambda\to\infty$.
Of particular importance is the $L^2$--mass of the groundstates
\begin{equation}\label{e12}
M(\lambda):=\|u_\lambda\|_2^2,
\end{equation}
which plays a key role in the analysis of stability of the standing wave solution of the time--dependent NLS , cf.~Lewin and Nodari \cite[Section 3.2]{Lewin-1} for a discussion in the context of the local combined power NLS.

For the local prototype of $(P_\lambda)$ with $b=0$ the asymptotic profiles of the ground sates were studied in \cite{Akahori-3, Akahori-2}, where the authors considered the following nonlinear Schr\"odinger equation with focusing combined powers nonlinearity:
\begin{equation}\label{e13}
-\Delta u+\lambda u=|u|^{q-2}u+|u|^{p-2}u, \quad {\rm in} \ \mathbb R^N,
\end{equation}
where $N\ge 3$ and $2<q<p\le 2^*$. When  $p=2^*$ and $q\in (2+\frac{4}{N},2^*)$,   Akahori et al.  in \cite{Akahori-3}   proved that for small $\lambda>0$ the positive ground state of \eqref{e13} is unique and non-degenerate,  and as $\lambda\to 0$ the unique positive ground state $u_\lambda$ converges after an explicit rescaling to the unique positive solution of the limit equation $-\Delta u+u=u^{q-1}$ in $\R^N$. In \cite{Akahori-2}, after a suitable implicit rescaling the authors establish a uniform decay estimate for the positive ground states $u_\lambda$, and then  prove  the uniqueness and nondegeneracy of ground states $u_\lambda$  for $N\ge 5$ and large $\lambda>0$,  and show that for $N\ge 3$, as $\lambda\to\infty$, $u_\lambda$ converges to a particular solution of the critical Emden--Fowler equation.    
Recently, for $p=2^*$,  Coles and Gustafson \cite{Coles} proved that the radial ground 
state $u_\lambda$ is also unique and non-degenerate for all large $\lambda>0$ when $N = 3$ and $q\in (4, 2^* )$. 
In \cite{MM-1}, the authors  studied a related problem and its connection with a mass constrained problem by using a rescaling argument and the concentration--compactness principle. See also \cite{MM-2} for a nonlinear Choquard type equation.

The techniques in this work (as well as in \cite{MM-1,MM-2}) is inspired by \cite{Moroz-1}, where the second author and  C. Muratov studied the asymptotic properties of ground states for a combined powers Schr\"odinger equations with a focusing exponent $p>2$ and a defocusing exponent $q>p$, 
\begin{equation}\label{e14}
-\Delta u+\lambda u=|u|^{p-2}u-|u|^{q-2}u, \quad {\rm in} \ \mathbb R^N,
\end{equation}
and obtained a sharp asymptotic characterisation of the limit profiles of positive ground states $u_\lambda$ of \eqref{e14} as $\lambda\to 0$. Later, in \cite{Lewin-1},  M. Lewin and S. Rota Nodari proved a general result about the uniqueness and non-degeneracy of positive radial solutions of \eqref{e14}. The non-degeneracy of the unique positive solution allowed them to refine the asymptotic results in \cite{Moroz-1} and, amongst other things, to establish the exact asymptotic behavior of $M(\lambda)=\|u_\lambda\|_2^2$. In particular, this implied the uniqueness of normalised energy minimizers at fixed masses in certain regimes. See also \cite{Liu-1}, where Zeng Liu and the second author extended the results in \cite{Moroz-1} to a class of Choquard type equation. 
 
In the present paper, we study the limit asymptotic profiles of the ground sates $u_\lambda$ of the Kirchhoff problem $(P_{\lambda})$ by using a rescaling argument and the concentration--compactness principle,  and obtain an explicit asymptotic expression of different norms of ground states for fixed frequency problem $(P_\lambda)$. To do so, we adapt the technique developed in \cite{MM-1}. However, additional difficulties arise since $(P_\lambda)$ contains five terms and as a consequence, the Poho\v zaev--Nehari algebraic relations can not be resolved, in general.  Fortunately, we succeed to overcome this difficulty in the case $p=2^*$, but the method does not work any more for $p<2^*$. In the latter case, we shall use a suitable scaling to reduce $(P_\lambda)$ to the  local equation \eqref{e13} (cf. \eqref{e52}).  The disadvantage of such a rescaling is that for $b\not=0$, the rescaled family of  ground states  for $(P_\lambda)$ should not necessarily be a ground state for \eqref{e13}.  Besides, to obtain a precise estimate of the least  energy, the scaling should transform a ground state for $(P_\lambda)$  into a ground state of the local equation \eqref{e13}. 
Generally speaking, this is not the case, which prevents us from deriving a precise energy estimate of the ground state $p<2^*$, see Section 5 for a discussion.

 Alternatively to the study of {\em fixed frequency} solutions of $(P_\lambda)$, one can search for solutions to $(P_\lambda)$ with a prescribed mass, that is for a fixed $c>0$ to search for $u\in H^1(\mathbb R^N)$ and $\lambda\in \mathbb R$ that satisfy
\begin{equation}\label{e15}
\left\{\begin{array}{rl}
&-(a+b\int_{\mathbb R^N}|\nabla u|^2)\Delta u+\lambda u=\mu |u|^{q-2}u+|u|^{p-2}u \  \ {\rm in}  \  \mathbb R^N,\medskip\\
&\quad u\in H^1(\mathbb R^N),  \   \    \   \int_{\mathbb R^N}|u|^2=c^2,
\end{array}\right.
\end{equation}
where $\mu>0$ is a new parameter and the frequency $\lambda\in \mathbb R$ becomes a part of the unknown.
 The solutions of \eqref{e15} are usually denoted by a pair $(u,\lambda)$ and referred to as {\em normalized} solutions. Normalised solutions can be obtained by searching critical points of the energy functional 
\begin{equation}\label{e16}
E_\mu(u)=\frac{a}{2}\int_{\mathbb R^N}|\nabla u|^2+\frac{b}{4}\Big(\int_{\mathbb R^N}|\nabla u|^2\Big)^2-\frac{\mu}{q}\int_{\mathbb R^N}|u|^q-\frac{1}{p}\int_{\mathbb R^N}|u|^{p}, 
\end{equation}
subject to the constraint 
$$
S_c:=\{ u\in H^1(\mathbb R^N): \ \int_{\mathbb R^N}|u|^2=c^2\},
$$
where $\lambda\in \mathbb R$ appears a posteriori as a Lagrange multipliers. 
 
In the local case $b=0$,  by rescaling  we also  assume $a=1$. Then equation $(P_\lambda)$  reduces to the classical non linear Schr\"odinger equation
\begin{equation}\label{e17}
-\Delta u +\lambda u =\mu |u|^{q-2}u+|u|^{p-2}u, \ \  {\rm in} \  \mathbb R^N.
\end{equation}
Normalized solutions of \eqref{e17} were studied by T. Cazenave and P.-L. Lions \cite{Cazenave-1}, N. Soave \cite{Soave-1,Soave-2}, L. Jeanjean et al. \cite{Jeanjean-2, Jeanjean-4},  L. Jeanjean and T.  Le \cite{Jeanjean-3}. 
The additional  parameter  $\mu>0$ is often introduced  to control the unknown  Lagrange multipliers $\lambda\in \mathbb R$.   Some of the results on normalized solutions to \eqref{e17} are summarized in \cite{Li-1}. The asymptotic behavior of normalized solution as $\mu$ varies in its range is studied in \cite{Jeanjean-3, Soave-1,  Soave-2, Wei-1, Wei-2}. We mention that in the case $N\ge 4$, $2<q<2+\frac{4}{N}$ and $p=2^*$,  L. Jeanjean and T.  Le \cite{Jeanjean-3} obtained a normalized solution $u_c$ of  mountain pass type for small $c>0$ and proved that 
$\lim_{c\to 0}\|\nabla u_c\|_2^2=S^{N/2}$ and $\lim_{c\to 0}E_\mu(u_c)=\frac{1}{N}S^{N/2}$, where $S$ is the best Sobolev constant.

In  the above results, the number $p_0:=2+\frac{4}{N}$, called the $L^2$-critical exponent,  is crucial for the existence and asymptotic behavior of normalized solutions of \eqref{e17}.   
 However, it is showed in \cite{Ye-1,Ye-2} that when $b\not=0$,
 $p_b:=2+\frac{8}{N}$ is the $L^2$-critical exponent for the minimization problem
\begin{equation}\label{e18}
E_\mu(c):=\inf_{u\in S_c}E_\mu(u).
\end{equation}
 in the sense that for each $c > 0$, $E_\mu(c)>-\infty$ if $2 <p< p_b$ and $E_\mu(c)=-\infty$ if $p_b <
p < 2^*$.  In particular, when $N=3$,  the $L^2$-critical exponent for the problem \eqref{e15} is given by 
 \begin{equation}\label{e19}
 p_b=\left\{\begin{array}{lcl}
 10/3, \   \  {\rm if} \ b=0,\\
 14/3, \   \   {\rm if} \  b\not=0.\end{array}\right.
 \end{equation}
 In \cite{Li-1}, Li, Luo and Yang consider the existence and multiplicity of normalized solutions of \eqref{e15} when $N=3$ and prove that if  $2<q<\frac{10}{3}$ and $\frac{14}{3}<p<6$, then for small $\mu>0$, $E_\mu|_{S_c}$ has a local minimizer at a negative energy level $m(c,\mu)<0$, and has a second critical point of mountain pass type at a positive energy level $\sigma(c,\mu)>0$. If $2<q<\frac{10}{3}<p=6$, then for small  $\mu>0$ a ground state solution is obtained. If $\frac{14}{3}<q<p\le 6$, then for any $\mu>0$, a critical point of  mountain pass type is also obtained. Furthermore,  as the parameter $\mu\to 0^+$, the asymptotic behavior of energy $m(c,\mu)$ and the normalized solution is also investigated.  To our  best  knowledge, the existence of normalized solutions to \eqref{e15} with $\frac{10}{3}<q<p<\frac{14}{3}$ is still unknown.

 When $\mu=0$, then the equation $(P_\lambda)$  reduces to the following Kirchhoff equation with a homogeneous nonlinearity
 \begin{equation}\label{e110}
-\big(a+b\int_{\mathbb R^N}|\nabla u|^2\big)\Delta u +\lambda u =|u|^{p-2}u, \ \  {\rm in} \  \mathbb R^N,
\end{equation}
Amongst other things,  Li and Ye \cite{Ye-2} studied the existence and concentration behavior of minimizers for \eqref{e110} and obtained precise asymptotic behavior  of normalized solutions to  \eqref{e110} with $2<p<2+\frac{8}{N}$ as $c\to +\infty$. For $2<p<2+\frac{4}{N}$, Zeng and Zhang \cite{Zeng-1} proved the existence and uniqueness of the minimizer to the minimization problem
$ E_0(c):=\inf_{u\in S_c}E_0(u)$ for any $c>0$, while  for  $2+\frac{4}{N}\le p<2+\frac{8}{N}$  the authors proved that there exists a threshold mass $c_*>0$ such that for any $c\in (0,c_*)$  there is no  minimizer and for $c>c_*$ there is  a unique minimizer.  Moreover, a precise formula for the minimizer and the threshold value $c_*$ is given according to the mass $c$.  In the case $2<p< 2^*$, Qi and Zou \cite{Qi-1}  obtain the exact number and expressions of the
positive normalized solutions for \eqref{e110} and then answer an open problem
concerning the exact number of positive solutions to the Kirchhoff equation with fixed frequency.  Recently, Qihan He et al.  \cite{He-1} studied  the existence and asymptotic behavior of normalized solutions of \eqref{e110} with  $|u|^{p-2}u$ replaced by a general subcritical nonlinearity $g(u)$ of mass super-critical type. In particular, $g(u)$ contains the nonlinearity in \eqref{e15} with $2+\frac{8}{N}<q\le p<2^*$ as a special case. Under some suitable assumptions,  they obtain the existence of ground state normalized solutions for any given $c >0$. After a detailed analysis via the blow up method, they also described the asymptotic behavior of these solutions as $ c \to  0^+$, as well as $c \to +\infty$.  

If  both $b\not=0$ and $\mu\not=0$,  then $(P_\lambda)$ become a nonlocal equation with a non-homogeneous nonlinearity. It is much more challenging and interesting to investigate the existence and qualitative properties of solutions of \eqref{e15}. Some existence results concerning the normalized solutions of \eqref{e15} have been obtained over  the past few years, see \cite{Li-1, Zhang-1} and reference therein for Kirchhoff equations with combined powers nonlinearity.  
 However, less progress is made on the asymptotic behavior of these solutions whenever the associated parameter varies in a suitable range.  The technique used in \cite{Qi-1,Ye-2, Zeng-1, Zeng-2} for the asymptotic study of equation \eqref{e110} is not applicable any more, and any explicit expression of normalized solution in terms of the mass $c$ is not available for the nonlocal problem $(P_\lambda)$.

Our main purpose in this paper is to study the effect of the nonlocal term in the case $b\neq 0$ on the existence, non-existence, multiplicity and properties of normalized solutions of \eqref{e15}  and to understand the role of the $L^2$-critical exponent $p_b$ in  the existence and asymptotic behavior of normalized  solutions of \eqref{e15}  as the parameter $c$ varies. As a direct consequence of our main results on the fixed frequency problem $(P_\lambda)$ in this paper, we are able to obtain an explicit asymptotic expression of different norms of positive normalized  solutions,  and to give  a complete description on the  existence, multiplicity and precise asymptotic behavior  of positive normalized solutions of \eqref{e15}. In particular, we  prove that both $p_0=\frac{10}{3}$ and $p_b=\frac{14}{3}$ play a key role in the existence,  multiplicity and the asymptotic behavior of normalized solutions of \eqref{e15} if $b\not=0$.
\smallskip

\noindent
\textbf{Organization of the paper.} In Section 2, we state  the main results in this paper. In Section 3, we give a proof of Theorem 2.1 for small $\lambda>0$. Section 4 is  devoted to the proof of Theorems 2.1 for large $\lambda>0$.  In Section 5, we prove Theorem 2.2, and in the last section, as an application of our main results, we present some results concerning the existence, non-existence, and exact number of normalized solutions of the associated mass constrained problem.

\smallskip

\noindent
\textbf{Basic notations.} Throughout this paper, we assume $N\geq
3$. 

\begin{itemize}[leftmargin=2em]
\item $ C_c^{\infty}(\mathbb{R}^N)$ denotes the space of smooth functions with compact support in $\mathbb{R}^N$.
\smallskip

\item $L^p(\mathbb{R}^N)$ with $1\leq p<\infty$ is the Lebesgue space
with the norm $\|u\|_p=\left(\int_{\mathbb{R}^N}|u|^p\right)^{1/p}$.
\smallskip
 
\item $ H^1(\mathbb{R}^N)$ is the usual Sobolev space with the norm
$\|u\|_{H^1(\mathbb{R}^N)}=\left(\int_{\mathbb{R}^N}|\nabla u|^2+|u|^2\right)^{1/2}$. 
\smallskip

\item $H_r^1(\mathbb{R}^N)=\{u\in H^1(\mathbb{R}^N):
u\  \mathrm{is\ radially \ symmetric}\}$. 
\smallskip

\item $ D^{1,2}(\mathbb{R}^N)=\{u\in
L^{2^*}(\mathbb{R}^N): |\nabla u|\in
L^2(\mathbb{R}^N)\}$. 
\smallskip

\item
For any $q\in (2,2^*)$ where $2^*=\frac {2N}{N-2}$, we define 
\begin{equation}\label{e111}
S:=\inf_{u\in D^{1,2}(\mathbb R^N)\setminus \{0\}}\frac{\int_{\mathbb R^N}|\nabla u|^2}{\left(\int_{\mathbb R^N}|u|^{2^*}\right)^{\frac{2}{2^*}}},\quad 
S_q=\inf_{u\in H^1(\mathbb R^N)\setminus \{0\}}\frac{\int_{\mathbb R^N}|\nabla u|^2+|u|^2}{(\int_{\mathbb R^N}|u|^q)^{\frac{2}{q}}}.
\end{equation}

\item
$B_r$ denotes the ball in $\mathbb R^N$ with radius $r>0$ and centered at the origin,  $|B_r|$ and $B_r^c$ denote its Lebesgue measure  and its complement in $\mathbb R^N$, respectively.  

\item
As usual, $C$, $c$, etc., denote generic positive constants.
\end{itemize}

\noindent
\textbf{Asymptotic notations.}
For $\lambda>0$ and nonnegative functions $f(\lambda)$ and  $g(\lambda)$, we write:
\smallskip

(1)  $f(\lambda)\lesssim g(\lambda)$ or $g(\lambda)\gtrsim f(\lambda)$ if there exists a positive constant $C$ independedent of $\lambda$ such that $f(\lambda)\le Cg(\lambda)$.
\smallskip

(2) $f(\lambda)\sim g(\lambda)$ if $f(\lambda)\lesssim g(\lambda)$ and $f(\lambda)\gtrsim g(\lambda)$.
\smallskip

\noindent
If $|f(\lambda)|\lesssim |g(\lambda)|$, we write $f(\lambda)=O((g(\lambda))$. We also denote by $\Theta=\Theta(\lambda)$ a  generic  positive function satisfying
$
C_1\lambda \le \Theta(\lambda)\le C_2\lambda
$
for some positive numbers $C_1,C_2>0$, which are independent of $\lambda$. Finally, if $\lim f(\lambda)/g(\lambda)=1$ as $\lambda\to \lambda_0$, then we write $f(\lambda)\simeq g(\lambda)$ as $\lambda\to \lambda_0$.

\section{Main results}

The existence of ground state solutions established in  \cite{Lu-1} for Kirchhoff equations with a general nonlinearity and in \cite[Theorem 2.1]{Xie-1} for Kirchhoff equations with critical nonlinearites applies  to $(P_\lambda)$  directly or after a suitable scaling. 
 In this paper, we are interested in the asymptotic behavior of ground state solutions of $(P_\lambda)$.  Our main results are the following two theorems. In the first one, we consider $(P_\lambda)$ with $p=2^*$ in dimensions $N=3,4$. In the second one, we consider $(P_\lambda)$ in dimension $N=3$.

\begin{theorem} \label{t1}
 Let $p=2^*$ and $\{u_\lambda\}$ be a family ground states of $(P_\lambda)$. 
If $N=3, 4$, and $q\in(2,2^*)$, then for small $\lambda>0$, $u_\lambda$ satisfies
$$
u_\lambda(0)= \lambda^{\frac{1}{q-2}}(V_0(0)+o(1)),
$$
$$
\|\nabla u_\lambda\|^2_2=\lambda^{\frac{2N-q(N-2)}{2(q-2)}}\left\{\frac{N(q-2)}{2q}a^{\frac{N-2}{2}}S_q^{\frac{q}{q-2}}+O(\lambda^{\frac{2N-q(N-2)}{2(q-2)}})\right\},
$$
$$
\| u_\lambda\|^2_2=\lambda^{\frac{4-N(q-2)}{2(q-2)}}\left\{\frac{2N-q(N-2)}{2q}a^{\frac{N}{2}}S_q^{\frac{q}{q-2}}+O(\lambda^{\frac{2N-q(N-2)}{2(q-2)}})\right\},
$$
$$
\|u_\lambda\|_q^q=\lambda^{\frac{2N-q(N-2)}{2(q-2)}}\left\{a^{\frac{N}{2}}S_q^{\frac{q}{q-2}}+o(1)\right\}.
$$
$$
\|u_\lambda\|_{2^*}^{2^*}=\lambda^{\frac{N[2N-q(N-2)]}{2(N-2)(q-2)}}\left\{a^{\frac{N}{2}}\|V_0\|_{2^*}^{2^*}+o(1)\right\}.
$$
and the least energy $m_\lambda$ of the ground state satisfies
\begin{equation}\label{e21}
m_\lambda=I_\lambda(u_\lambda)=\lambda^{\frac{2N-q(N-2)}{2(q-2)}}\left\{\frac{q-2}{2q}a^{\frac{N}{2}}S_q^{\frac{q}{q-2}}+O(\lambda^{\frac{2N-q(N-2)}{2(q-2)}})\right\}.
\end{equation}
Moreover, for small $\lambda>0$, the rescaled family of ground states 
\begin{equation}\label{e22}
v_\lambda(x)=\lambda^{-\frac{1}{q-2}}u_\lambda\big(\lambda^{-\frac{1}{2}}x\big)
\end{equation}
satisfies 
$$
\|\nabla v_\lambda\|^2_2\sim \|v_\lambda\|_{2^*}^{2^*} \sim\|v_\lambda\|^q_q\sim \|v_\lambda\|_2^2\sim 1,
$$
and as $\lambda\to 0$, $v_\lambda$ converges in $H^1(\mathbb R^N)$ to $v_0$, where $v_0(x)=V_0(\frac{x}{\sqrt a})$ and $V_0$ is the unique positive solution of the equation
$$
-\Delta V +V=V^{q-1}\quad\text{in $\R^N$}.
$$
If $N=4$, $q\in (2,4)$ and $bS^2<1$ or $N=3$ and $q\in (4,6)$, then for large  $\lambda>0$, $u_\lambda$ satisfies 
$$
u_\lambda(0)\sim\left\{
\begin{array}{lcl}
(\lambda\ln\lambda)^{\frac{2}{q-2}},
&\text{if}&N=4,\smallskip\\
\lambda^{\frac{1}{2(q-4)}}, &\text{if}&N=3,
\end{array}\right.
$$
$$
\frac{aS^2}{1-bS^2}-\|\nabla u_\lambda\|^2_2\sim \left(\lambda\ln\lambda\right)^{-\frac{4-q}{q-2}},   \   \  \text{if} \  \   N=4,\smallskip
$$
$$
\frac{bS^{3}+S^{\frac{3}{2}}\sqrt{b^2S^3+4a}}{2}-\|\nabla u_\lambda\|^2_2\sim \lambda^{-\frac{6-q}{2(q-4)}}, \  \   \   \text{if}  \  \  N=3,
$$
$$
 \|u_\lambda\|_{2^*}^{2^*}=\left\{
\begin{array}{lcl}
\frac{a^2S^2}{(1-bS^2)^2}+O(\left(\lambda\ln\lambda\right)^{-\frac{4-q}{q-2}}), &\text{if}&N=4,\smallskip\\
 \frac{1}{8}(bS^{2}+S^{\frac{1}{2}}\sqrt{b^2S^3+4a})^3+O(\lambda^{-\frac{6-q}{2(q-4)}}), &\text{if}&N=3,
 \end{array}
 \right.
$$
$$
\|u_\lambda\|_2^2\sim\left\{
\begin{array}{lcl}
\lambda^{-\frac{2}{q-2}}(\ln\lambda)^{-\frac{4-q}{q-2}}, &\text{if}&N=4,\smallskip\\
\lambda^{-\frac{q-2}{2(q-4)}}, &\text{if}&N=3,
\end{array}\right.
$$
$$
\|u_\lambda\|_q^q\sim\left\{
\begin{array}{lcl}
(\lambda\ln\lambda)^{-\frac{4-q}{q-2}}, &\text{if}&N=4,\smallskip\\
\lambda^{-\frac{6-q}{2(q-4)}}, &\text{if}&N=3.
\end{array}\right.
$$
Moreover,  there exists $\zeta_\lambda\in (0,+\infty)$ verifying  
$$
\zeta_\lambda\sim\left\{
\begin{array}{lcl}
(\lambda\ln\lambda)^{-\frac{1}{q-2}}, &\text{if}&N=4,\smallskip\\
\lambda^{-\frac{1}{q-4}},&\text{if}&N=3,
\end{array}\right.
$$
such that for large $\lambda>0$, the rescaled family of ground states
\begin{equation}\label{e23}
w_\lambda(x)=\zeta_\lambda^{\frac{N-2}{2}}u_\lambda(\zeta_\lambda x)
\end{equation}
satisfies 
$$
\|\nabla w_\lambda\|^2_2\sim \|w_\lambda\|_{2^*}^{2^*}\sim \|w_\lambda\|_q^q\sim 1, \quad 
\  \|w_\lambda\|_2^2\sim \left\{
\begin{array}{lcl}
\ln\lambda, &\text{if}&N=4,\smallskip\\
\lambda^{\frac{6-q}{2(q-4)}},&\text{if}&N=3,
\end{array}\right.
$$
and as $\lambda\to \infty$, $w_\lambda$ converges in $D^{1,2}(\mathbb R^N)$ and  $L^q(\mathbb R^N)$  to $w_\infty$, where 
$w_\infty(x)=U_1(\gamma_Nx)$ with $\gamma_N$ being given by
\begin{equation}\label{e24}
\gamma_N=\left\{
\begin{array}{lcl}
\sqrt{\frac{1-bS^2}{a}}, &\text{if}&N=4,\smallskip\\
\frac{2}{bS^{\frac{3}{2}}+\sqrt{b^2S^3+4a}}, &\text{if}&N=3.
\end{array}\right.
\end{equation}
Moreover,  the least energy $m_\lambda$ of the ground state satisfies
\begin{equation}\label{e25}
m_\infty-m_\lambda\sim \left\{
\begin{array}{lcl}
 \left(\lambda\ln\lambda\right)^{-\frac{4-q}{q-2}}, &\text{if}&N=4,\smallskip\\
\lambda^{-\frac{6-q}{2(q-4)}}, &\text{if}&N=3,
 \end{array}
 \right.
\end{equation}
where $m_\infty$ is given by
\begin{equation}\label{e26}
m_\infty=\left\{\begin{array}{lcl}
\frac{a^2S^2}{4(1-bS^2)}, &\text{if}&N=4,\smallskip\\
\frac{1}{6}a(bS^3+S^{\frac{3}{2}}\sqrt{b^2S^3+4a})&\text{if}&N=3.\\
\mbox{} +\frac{1}{48}b(bS^3+S^{\frac{3}{2}}\sqrt{b^2S^3+4a})^2,
\end{array}\right.
\end{equation}
\end{theorem}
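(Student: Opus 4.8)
The plan is to combine a rescaling argument with the concentration--compactness principle, in the spirit of \cite{MM-1,Moroz-1}, organised around two algebraic identities. Writing $A=\|\nabla u_\lambda\|_2^2$, $B=\|u_\lambda\|_2^2$, $C=\|u_\lambda\|_q^q$, $D=\|u_\lambda\|_{2^*}^{2^*}$ for a ground state $u_\lambda$, the Nehari identity $aA+bA^2+\lambda B=C+D$ together with the Poho\v zaev identity $\frac{N-2}{2}(aA+bA^2)+\frac N2\lambda B=\frac Nq C+\frac{N-2}{2}D$ can, thanks to $p=2^*$, be partially resolved into
\begin{equation*}
\lambda\|u_\lambda\|_2^2=\frac{2N-q(N-2)}{2q}\,\|u_\lambda\|_q^q,\qquad
m_\lambda=I_\lambda(u_\lambda)=\frac aN\|\nabla u_\lambda\|_2^2+\frac{(4-N)b}{4N}\|\nabla u_\lambda\|_2^4.
\end{equation*}
These reduce the whole theorem to identifying one rescaled limit profile and pinning down the leading asymptotics of $\|\nabla u_\lambda\|_2^2$; all remaining quantities follow by unwinding a rescaling. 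Throughout I work with radial, nonincreasing ground states, obtained by Schwarz symmetrisation (which does not increase any of the five terms of $I_\lambda$), so that $u_\lambda(0)=\|u_\lambda\|_\infty$.

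For $\lambda\to0$ I use the rescaling \eqref{e22}, under which $(P_\lambda)$ becomes
\begin{equation*}
-\bigl(a+b\lambda^{\theta}\|\nabla v_\lambda\|_2^2\bigr)\Delta v_\lambda+v_\lambda=v_\lambda^{q-1}+\lambda^{\sigma}v_\lambda^{2^*-1},\qquad
\theta=\frac{2N-q(N-2)}{2(q-2)},\quad\sigma=\frac{2^*-q}{q-2},
\end{equation*}
with $0<\theta\le\sigma$ when $N=3,4$. First I prove the a priori bound $m_\lambda\lesssim\lambda^{\theta}$ by evaluating $I_\lambda$ along the curve $t\mapsto t\phi_\lambda$ for a rescaled copy $\phi_\lambda$ of $V_0$; combined with the two identities above and the Gagliardo--Nirenberg inequality, this shows $\{v_\lambda\}$ is bounded in $H^1(\R^N)$ and bounded away from $0$ there. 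Compactness of $H^1_r\hookrightarrow L^q$ then gives $v_\lambda\to v_0\neq0$ in $L^q$ along a subsequence; passing to the limit in the equation (the $\lambda^{\theta}$ and $\lambda^{\sigma}$ terms vanish since $\|\nabla v_\lambda\|_2$ is bounded) shows $v_0$ solves $-a\Delta v_0+v_0=v_0^{q-1}$, whence $v_0(x)=V_0(x/\sqrt a)$ by Kwong's uniqueness theorem. A Brezis--Lieb / energy-comparison argument upgrades this to convergence in $H^1(\R^N)$, and uniqueness of $v_0$ promotes the subsequential convergence to convergence of the full family. Finally, testing the rescaled equation against $v_\lambda-v_0$ and using the non-degeneracy of $V_0$ yields $\|v_\lambda-v_0\|_{H^1}=O(\lambda^{\theta})$; unwinding the rescaling and inserting the Poho\v zaev--Nehari relations for $V_0$ (which express $\|\nabla V_0\|_2^2$ and $\|V_0\|_q^q$ through $S_q$) produces the stated expansions of all the norms, of $u_\lambda(0)$, and of $m_\lambda$.

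For $\lambda\to\infty$ the critical term dominates; I use \eqref{e23} with $\zeta_\lambda>0$ fixed by the normalisation $\|w_\lambda\|_q^q=1$, i.e.\ $\zeta_\lambda=\|u_\lambda\|_q^{q/\beta}$ with $\beta=\frac{2N-(N-2)q}{2}$, so that $\|\nabla w_\lambda\|_2^2=\|\nabla u_\lambda\|_2^2$ and
\begin{equation*}
\bigl(a+b\|\nabla w_\lambda\|_2^2\bigr)(-\Delta w_\lambda)+\lambda\zeta_\lambda^2\,w_\lambda=\zeta_\lambda^{\beta}w_\lambda^{q-1}+w_\lambda^{2^*-1}.
\end{equation*}
The bounds $0<m_\lambda<m_\infty$ (upper bound by truncated Aubin--Talenti test functions, as in \cite{Xie-1}) together with $m_\lambda=\frac aN A+\frac{(4-N)b}{4N}A^2$ bound $A$ from above and below and force $\zeta_\lambda\to0$ and $\lambda\zeta_\lambda^2\to0$. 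Then the concentration--compactness principle for the critical Sobolev embedding applies: vanishing is excluded by $\|w_\lambda\|_q=1$, and dichotomy by the minimality of $m_\lambda$ together with the strict subadditivity encoded in $m_\infty$; hence $w_\lambda\to w_\infty$ in $D^{1,2}(\R^N)\cap L^q(\R^N)$, where $(a+b\|\nabla w_\infty\|_2^2)(-\Delta w_\infty)=w_\infty^{2^*-1}$. Solving $\tilde a:=a+b\|\nabla w_\infty\|_2^2=a+b\tilde a^{(N-2)/2}S^{N/2}$ identifies $w_\infty(x)=U_1(\gamma_N x)$ with $\gamma_N=\tilde a^{-1/2}$, i.e.\ \eqref{e24}, and $\frac aN A+\frac{(4-N)b}{4N}A^2\to m_\infty$ gives \eqref{e26}. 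The genuinely dimension-dependent step is the tail analysis: since $w_\infty\notin L^2(\R^N)$ when $N=3,4$, comparing $w_\lambda$ with the bubble on $\{|x|\lesssim(\lambda\zeta_\lambda^2)^{-1/2}\}$ --- beyond which the term $\lambda\zeta_\lambda^2 w_\lambda$ forces exponential decay --- gives $\|w_\lambda\|_2^2\sim\ln\lambda$ for $N=4$ and $\|w_\lambda\|_2^2\sim(\lambda\zeta_\lambda^2)^{-1/2}$ for $N=3$; feeding this into the exact identity $\lambda\zeta_\lambda^2\|w_\lambda\|_2^2=\frac{2N-q(N-2)}{2q}\zeta_\lambda^{\beta}\|w_\lambda\|_q^q$ turns it into a self-consistent equation for $\zeta_\lambda$, solved by $\zeta_\lambda\sim(\lambda\ln\lambda)^{-1/(q-2)}$ ($N=4$) and $\zeta_\lambda\sim\lambda^{-1/(q-4)}$ ($N=3$). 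A refined version of the same estimate --- testing the equation against $w_\infty$ and controlling the corrections $\int\zeta_\lambda^{\beta}w_\lambda^{q-1}w_\infty$ and $\int\lambda\zeta_\lambda^2 w_\lambda w_\infty$ --- yields the convergence rate of $A$, hence \eqref{e25}, while $u_\lambda(0)=\zeta_\lambda^{-(N-2)/2}w_\lambda(0)$ and the definitions of the remaining norms give the rest once $\zeta_\lambda$ is known.

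The main obstacle, in both regimes, is the loss of compactness. For $\lambda\to0$ it is comparatively mild (subcritical, handled by radial compactness), and the real work is the sharp rate $O(\lambda^{\theta})$, for which the non-degeneracy of $V_0$ is essential. For $\lambda\to\infty$ it is the crux: excluding dichotomy for the critical embedding, and above all the quantitative tail analysis interpolating between the Aubin--Talenti profile and its exponentially small tail --- this is precisely what fixes the dimension-dependent rates of $\zeta_\lambda$ and of $m_\infty-m_\lambda$, and is the origin of the difference between $N=3$ and $N=4$.
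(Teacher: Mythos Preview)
Your overall strategy coincides with the paper's: rescale, exploit the Nehari--Poho\v zaev system (which, precisely because $p=2^*$, decouples into the two scalar relations you wrote), and identify the limit profile by a compactness argument. The organisation differs in two places, and in each the paper's route is somewhat more elementary.

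For $\lambda\to0$ you invoke the non-degeneracy of $V_0$ to obtain $\|v_\lambda-v_0\|_{H^1}=O(\lambda^\theta)$. The paper avoids non-degeneracy entirely: it proves the two-sided energy bound $m_\lambda-m_0=O(\lambda^\theta)$ by minimax comparison (testing $J_\lambda$ on $v_0$ and $J_0$ on $v_\lambda$, Lemma~\ref{l24-2}), and then reads the individual norm rates directly off the algebraic identities (Lemma~\ref{l25}). Your route yields a stronger conclusion (an $H^1$-distance rate, not just norm rates) at the cost of an extra ingredient.

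For $\lambda\to\infty$ you fix $\zeta_\lambda$ by the normalisation $\|w_\lambda\|_q=1$ and determine it from a self-consistent equation built on a \emph{two-sided} tail estimate $\|w_\lambda\|_2^2\sim g(\lambda\zeta_\lambda^2)$. The paper instead uses a two-step rescaling: first \eqref{e41} to land in $H^1$, then it extracts the concentration scale $\xi_\lambda\to0$ from a bubble decomposition (Lemma~\ref{l34}); one side of the $\xi_\lambda$-estimate comes from a refined truncated-Talenti computation (Lemma~\ref{l28}) giving $m_\lambda\le m_\infty-C\cdot(\text{rate})$, the other from the $L^2$ lower bound (Lemmas~\ref{l37}--\ref{l39}). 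Your self-consistent scheme is attractive and can in principle bypass Lemma~\ref{l28}, but two steps are under-argued: (i) the claim that boundedness of $A$ alone forces $\zeta_\lambda\to0$ and $\lambda\zeta_\lambda^2\to0$ is not immediate --- the paper gets this from the concentration-compactness step after first securing an $H^1$ (not just $D^{1,2}$) bound via the intermediate rescaling \eqref{e41}; and (ii) the \emph{upper} half of your $L^2$ tail estimate, as well as the $L^q$-convergence $w_\lambda\to w_\infty$, require the uniform pointwise bound $w_\lambda(x)\lesssim(1+|x|)^{-(N-2)}$, which the paper imports from \cite{Akahori-2} (eq.~\eqref{e429}) and which you should make explicit.
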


\begin{theorem} \label{t2}
 Let $N=3$, $b>0$, $2<q\le p<2^*$ and $\{u_\lambda\}$ be a family ground states of $(P_\lambda)$. 
If $p=q$, then for any $\lambda>0$, $u_\lambda$ is the unique positive solution of $(P_\lambda)$ and satisfies
$$
u_\lambda(0)= \left(\lambda/2\right)^{\frac{1}{p-2}}W_0(0),
$$
\begin{equation}\label{e27}
\begin{array}{lcl}
\|\nabla u_\lambda\|^2_2&=&\lambda^{\frac{6-p}{2(p-2)}}\frac{3(p-2)}{p}\sqrt{\varpi_\lambda}\left(S_p/2\right)^{\frac{p}{p-2}}\\
&=&\left\{ \begin{array}{lcl}
\lambda^{\frac{6-p}{p-2}}\left(\frac{9(p-2)^2}{2p^2}(S_p/2)^{\frac{p}{p-2}}+\Theta(\lambda^{-\frac{6-p}{p-2}})\right),\  &as& \lambda\to\infty,\\
\lambda^{\frac{6-p}{2(p-2)}}\left(a^{\frac{1}{2}}(S_p/2)^{\frac{p}{p-2}}+\Theta(\lambda^{\frac{6-p}{2(p-2)}})\right), \  &as&  \lambda\to 0,\end{array}\right.
\end{array}
\end{equation}
\begin{equation}\label{e28}
\begin{array}{lcl}
\| u_\lambda\|^2_2&=&\lambda^{\frac{10-3p}{2(p-2)}}\frac{6-p}{p}(\sqrt{\varpi_\lambda})^3\left(S_p/2\right)^{\frac{p}{p-2}}\\
&=&\left\{\begin{array}{lcl}
\lambda^{\frac{14-3p}{p-2}}\left(\frac{27b^3(p-2)^3(6-p)}{8p^4}(S_p/2)^{\frac{p}{p-2}}+\Theta(\lambda^{-\frac{6-p}{p-2}}\right), \  &as&  \lambda\to\infty,\\
\lambda^{\frac{10-3p}{2(p-2)}}\left(\frac{6-p}{p}a^{\frac{3}{2}}(S_p/2)^{\frac{p}{p-2}}+\Theta(\lambda^{\frac{6-p}{2(p-2)}})\right), \  &as&  \lambda\to 0,\end{array}
\right.
\end{array}
\end{equation}
\begin{equation}\label{e29}
\begin{array}{lcl}
\|u_\lambda\|_p^p&=&\lambda^{\frac{6-p}{2(p-2)}}(\sqrt{\varpi_\lambda})^3\left(S_p/2\right)^{\frac{p}{p-2}}\\
&=&\left\{\begin{array}{lcl}
\lambda^{\frac{14-3p}{p-2}}\left(\frac{27b^3(p-2)^3}{8p^3}(S_p/2)^{\frac{p}{p-2}}+\Theta(\lambda^{-\frac{6-p}{p-2}})\right), \  &as&  \lambda\to \infty,\\
\lambda^{\frac{6-p}{2(p-2)}}\left(a^{\frac{3}{2}}(S_p/2)^{\frac{p}{p-2}}+\Theta(\lambda^{\frac{6-p}{2(p-2)}})\right),\  &as& \lambda\to 0.\end{array}
\right.
\end{array}
\end{equation}
Moreover, for any $\lambda>0$, there holds
\begin{equation}\label{e223}
u_\lambda(x)=\left(\lambda/2\right)^{\frac{1}{p-2}}W_0(\lambda^{\frac{1}{2}}\varpi_\lambda^{-\frac{1}{2}}x), 
\end{equation}
where 
\begin{equation}\label{e210}
\sqrt{\varpi_\lambda}=
\frac{3b(p-2)}{4p}\lambda^{\frac{6-p}{2(p-2)}}(S_p/2)^{\frac{p}{p-2}}+\sqrt{\frac{9b^2(p-2)^2}{16p^2}\lambda^{\frac{6-p}{p-2}}(S_p/2)^{\frac{2p}{p-2}}+a},
\end{equation}
and $S_p=\|
W_0\|_p^{p-2}, \  W_0\in H^1(\mathbb R^3)$ is the unique positive solution of the equation
$$
-\Delta W +W=W^{p-1} \quad\text{in $\R^N$}.
$$
If $q<p$ and $\lambda>0$ is sufficiently small, then $u_\lambda$ is the unique positive solution of $(P_\lambda)$ and satisfies
$$
u_\lambda(0)= \lambda^{\frac{1}{q-2}}(V_0(0)+o(1)),
$$
\begin{equation}\label{e211}
\|\nabla u_\lambda\|_2^2=\left\{\begin{array}{lcl}
\lambda^{\frac{6-q}{2(q-2)}}\left(\frac{3(q-2)}{2q}a^{\frac{1}{2}}S_q^{\frac{q}{q-2}}-\Theta(\lambda^{\frac{p-q}{q-2}})\right),   \   \ &{\rm if}&  \  q>2p-6,\\
\lambda^{\frac{6-q}{2(q-2)}}\left(\frac{3(q-2)}{2q}a^{\frac{1}{2}}S_q^{\frac{q}{q-2}}+O(\lambda^{\frac{6-q}{2(q-2)}})\right),   \   \ &{\rm if}&  \  q\le 2p-6, \end{array} \right.
\end{equation}
\begin{equation}\label{e212}
\|u_\lambda\|^2_2
=\left\{\begin{array}{lcl}
\lambda^{\frac{10-3q}{2(q-2)}}\left(\frac{6-q}{2q}a^{\frac{3}{2}}S_q^{\frac{q}{q-2}}-\Theta(\lambda^{\frac{p-q}{q-2}})\right),   \   \ &{\rm if}&  \  q>2p-6,\\
\lambda^{\frac{10-3q}{2(q-2)}}\left(\frac{6-q}{2q}a^{\frac{3}{2}}S_q^{\frac{q}{q-2}}+O(\lambda^{\frac{6-q}{2(q-2)}})\right),   \   \ &{\rm if}&  \  q\le 2p-6, \end{array} \right.
\end{equation}
\begin{equation}\label{e213}
\|u_\lambda\|_q^q=\left\{\begin{array}{lcl}
\lambda^{\frac{6-q}{2(q-2)}}\left(a^{\frac{3}{2}}S_q^{\frac{q}{q-2}}-\Theta(\lambda^{\frac{p-q}{q-2}})\right),   \   \ &{\rm if}&  \  q>2p-6,\\
\lambda^{\frac{6-q}{2(q-2)}}\left(a^{\frac{3}{2}}S_q^{\frac{q}{q-2}}+O(\lambda^{\frac{6-q}{2(q-2)}})\right),   \   \ &{\rm if}&  \  q\le 2p-6. \end{array} \right.
\end{equation}
Moreover, as $\lambda\to 0$,  the rescaled family of groundstates 
\begin{equation}\label{e224}
v_\lambda(x)=\lambda^{-\frac{1}{q-2}}u_\lambda(\lambda^{-1/2}\sqrt{\varpi_\lambda}x), \  \
\varpi_\lambda=a+b\int_{\R^N}|\nabla u_\lambda|^2,
\end{equation}
converge in $H^1(\mathbb R^3)$ to  the unique positive solution $V_0$ of the equation
$$
-\Delta V+V=V^{q-1}\quad\text{in $\R^N$}.
$$
If $q<p$ and   $\lambda>0$ is sufficiently large, then $u_\lambda$ is the unique positive solution of $(P_\lambda)$ and satisfies 
$$
u_\lambda(0)= \lambda^{\frac{1}{p-2}}(W_0(0)+o(1)),
$$
\begin{equation}\label{e214}
\|\nabla u_\lambda\|_2^2
=\left\{\begin{array}{lcl}
\lambda^{\frac{6-p}{p-2}}\left(\frac{9b(p-2)^2}{4p^2}S_p^{\frac{2p}{p-2}}+O(\lambda^{-\frac{p-q}{p-2}})\right),   \   \ &{\rm if}&  \  q>2p-6,\\
\lambda^{\frac{6-p}{p-2}}\left(\frac{9b(p-2)^2}{4p^2}S_p^{\frac{2p}{p-2}}+O(\lambda^{-\frac{6-p}{p-2}})\right),   \   \ &{\rm if}&  \  q\le 2p-6. \end{array} \right.
\end{equation}
\begin{equation}\label{e215}
\| u_\lambda\|^2_2
=\left\{\begin{array}{lcl}
\lambda^{\frac{14-3p}{p-2}}\left(\frac{27b^3(p-2)^3(6-p)}{16p^4}S_p^{\frac{4p}{p-2}}+O(\lambda^{-\frac{p-q}{p-2}})\right),   \   \ &{\rm if}&  \  q>2p-6,\\
\lambda^{\frac{14-3p}{p-2}}\left(\frac{27b^3(p-2)^3(6-p)}{16p^4}S_p^{\frac{4p}{p-2}}+O(\lambda^{-\frac{6-p}{p-2}})\right),   \   \ &{\rm if}&  \  q\le 2p-6, \end{array} \right.
\end{equation}
\begin{equation}\label{e216}
\| u_\lambda\|^p_p
=\left\{\begin{array}{lcl}
\lambda^{\frac{2(6-p)}{p-2}}\left(\frac{27b^3(p-2)^3}{8p^3}S_p^{\frac{4p}{p-2}}+O(\lambda^{-\frac{p-q}{p-2}})\right),   \   \ &{\rm if}&  \  q>2p-6,\\
\lambda^{\frac{2(6-p)}{p-2}}\left(\frac{27b^3(p-2)^3}{8p^3}S_p^{\frac{4p}{p-2}}+O(\lambda^{-\frac{6-p}{p-2}})\right),   \   \ &{\rm if}&  \  q\le 2p-6. \end{array} \right.
\end{equation}
Moreover, as $\lambda\to \infty$,  the rescaled family of groundstates 
\begin{equation}\label{e225}
w_\lambda(x)=\lambda^{-\frac{1}{p-2}}u_\lambda(\lambda^{-1/2}\sqrt{\varpi_\lambda}x), \  \
\varpi_\lambda=a+b\int_{\R^N}|\nabla u_\lambda|^2,
\end{equation}
converge  in $H^1(\mathbb R^3)$ to  the unique positive solution $W_0$  of the equation
$$
-\Delta W+W=W^{p-1}\quad\text{in $\R^N$}.
$$
\end{theorem}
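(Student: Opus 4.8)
The plan is to reduce $(P_\lambda)$, for $N=3$ and $p<2^*$, to the local equation~\eqref{e13} by freezing the Kirchhoff coefficient. If $u$ is a positive solution of $(P_\lambda)$, set $\varpi:=a+b\int_{\R^3}|\nabla u|^2>0$; then $\hat u(x):=u(\sqrt{\varpi}\,x)$ is a positive solution of $-\Delta \hat u+\lambda \hat u=\hat u^{q-1}+\hat u^{p-1}$ in $\R^3$, while the definition of $\varpi$ turns into the scalar identity $\varpi=a+b\sqrt{\varpi}\,\|\nabla \hat u\|_2^2$, i.e. $\sqrt{\varpi}=\tfrac12\big(b\|\nabla \hat u\|_2^2+\sqrt{b^2\|\nabla \hat u\|_2^4+4a}\big)$. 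Conversely, any positive solution of~\eqref{e13} together with this unique positive root $\varpi$ produces a positive solution $u(x)=\hat u(x/\sqrt{\varpi})$ of $(P_\lambda)$, and one has $u(0)=\hat u(0)$, $\|\nabla u\|_2^2=\sqrt{\varpi}\,\|\nabla \hat u\|_2^2$ and $\|u\|_r^r=\varpi^{3/2}\|\hat u\|_r^r$ for every $r$. In particular uniqueness of positive solutions of $(P_\lambda)$ is equivalent to uniqueness of positive solutions of~\eqref{e13}, and the rescalings in~\eqref{e223},~\eqref{e224},~\eqref{e225} are precisely the standard $\lambda$-rescalings of $\hat u_\lambda$.

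When $p=q$, equation~\eqref{e13} reads $-\Delta \hat u+\lambda \hat u=2\hat u^{p-1}$, whose unique positive solution is $\hat u_\lambda(x)=(\lambda/2)^{1/(p-2)}W_0(\lambda^{1/2}x)$ by Kwong's uniqueness theorem for $-\Delta W_0+W_0=W_0^{p-1}$; the dimension-three Poho\v zaev--Nehari identities give $\|\nabla W_0\|_2^2=\tfrac{3(p-2)}{2p}\|W_0\|_p^p$ and $\|W_0\|_p^p=S_p^{p/(p-2)}$. Substituting $\|\nabla \hat u_\lambda\|_2^2$ into the consistency identity and solving the resulting quadratic for $\sqrt{\varpi_\lambda}$ yields the closed form~\eqref{e210}; unwinding the two scalings then produces~\eqref{e223} and the norm formulas~\eqref{e27}--\eqref{e29}, and the behaviour as $\lambda\to0$ (where $\varpi_\lambda\to a$) and $\lambda\to\infty$ (where $\varpi_\lambda$ grows like a constant times $\lambda^{(6-p)/(p-2)}$) is read off by Taylor-expanding the square root in~\eqref{e210}. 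Uniqueness of $u_\lambda$ for every $\lambda>0$ follows, since both the local solution and the positive root are unique.

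When $q<p$ one rescales $\hat u_\lambda$ in the usual way: $V_\lambda(x):=\lambda^{-1/(q-2)}\hat u_\lambda(\lambda^{-1/2}x)$ solves $-\Delta V_\lambda+V_\lambda=V_\lambda^{q-1}+\lambda^{(p-q)/(q-2)}V_\lambda^{p-1}$, hence $V_\lambda\to V_0$ in $H^1(\R^3)$ as $\lambda\to0$, with the first-order expansion $V_\lambda=V_0+\lambda^{(p-q)/(q-2)}\Phi+o(\lambda^{(p-q)/(q-2)})$, where $\Phi$ solves the linearized equation $(-\Delta+1-(q-1)V_0^{q-2})\Phi=V_0^{p-1}$, solvable on the radial subspace by the non-degeneracy of $V_0$; symmetrically $W_\lambda(x):=\lambda^{-1/(p-2)}\hat u_\lambda(\lambda^{-1/2}x)\to W_0$ in $H^1(\R^3)$ as $\lambda\to\infty$ with a correction of order $\lambda^{-(p-q)/(p-2)}$. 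Since $\|\nabla \hat u_\lambda\|_2^2=\lambda^{(6-q)/(2(q-2))}\|\nabla V_\lambda\|_2^2\to0$ as $\lambda\to0$, while $\|\nabla \hat u_\lambda\|_2^2=\lambda^{(6-p)/(2(p-2))}\|\nabla W_\lambda\|_2^2\to\infty$ as $\lambda\to\infty$, the consistency quadratic gives $\varpi_\lambda=a+O(\lambda^{(6-q)/(2(q-2))})$ as $\lambda\to0$ and $\varpi_\lambda\sim\lambda^{(6-p)/(p-2)}$ as $\lambda\to\infty$; feeding these together with the expansions of $V_\lambda$ and $W_\lambda$ into $u(0)=\hat u(0)$, $\|\nabla u\|_2^2=\sqrt{\varpi}\,\|\nabla \hat u\|_2^2$, $\|u\|_r^r=\varpi^{3/2}\|\hat u\|_r^r$ and the Poho\v zaev--Nehari identities produces~\eqref{e211}--\eqref{e216}. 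The dichotomy $q\gtrless 2p-6$ is precisely the comparison between the order $\lambda^{(6-q)/(2(q-2))}$ of the correction of $\sqrt{\varpi_\lambda}$ and the order $\lambda^{(p-q)/(q-2)}$ of the correction of $V_\lambda$ (they coincide exactly when $2(p-q)=6-q$), and it decides whether the leading remainder carries a definite sign $-\Theta(\cdot)$ or is merely $O(\cdot)$; since the $V_\lambda$, $W_\lambda$ just defined coincide with the $v_\lambda$, $w_\lambda$ of~\eqref{e224},~\eqref{e225}, the asserted $H^1$-convergence is immediate.

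The principal difficulty lies in the analysis of~\eqref{e13} itself: proving that for $\lambda$ small, resp.\ large, its positive solution is unique --- which requires a concentration--compactness/blow-up argument to exclude any positive solution whose rescaling fails to converge to $V_0$, resp.\ $W_0$, followed by an implicit-function-theorem argument near the limit profile using the non-degeneracy of $V_0$, resp.\ $W_0$ --- together with the sharp first-order expansion of that solution, including the sign of the leading remainder (which one extracts by differentiating the Poho\v zaev--Nehari relations in $\lambda$, equivalently by testing the linearized equation against $\Phi$). Granting these facts about~\eqref{e13}, the reduction, the explicit solution of the quadratic, and the bookkeeping of the rescalings are all routine.
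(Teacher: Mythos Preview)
Your proposal is correct and the overall strategy---freeze the Kirchhoff coefficient $\varpi=a+b\|\nabla u\|_2^2$, pass to the local problem $-\Delta \hat u+\lambda\hat u=\hat u^{q-1}+\hat u^{p-1}$, recover $\varpi$ from the quadratic consistency relation, and then read off the norms of $u_\lambda$ from those of the rescaled local solution---is exactly what the paper does. The $p=q$ case is handled identically, down to the explicit formula for $\sqrt{\varpi_\lambda}$.

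Where you diverge from the paper is in how the sharp asymptotics of the local equation are obtained when $q<p$. You propose to get the first-order expansion $V_\lambda=V_0+\lambda^{(p-q)/(q-2)}\Phi+o(\cdot)$ (and analogously for $W_\lambda$) via the implicit function theorem and non-degeneracy of the limit profile, and then to extract the signed remainders by differentiating the Nehari--Poho\v zaev relations. The paper instead works variationally: it proves a two-sided energy estimate $m_0-m_\lambda\sim\lambda^{(p-q)/(q-2)}$ (respectively $\sim\lambda^{-(p-q)/(p-2)}$) by the same test-function arguments used in Section~3, and then feeds this directly into the Nehari--Poho\v zaev algebra to obtain $\|w_\lambda\|_q^q$, $\|\nabla w_\lambda\|_2^2$, $\|w_\lambda\|_2^2$ with the correct signed remainder, without ever writing down $\Phi$. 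The implicit function theorem is invoked only once, to guarantee uniqueness of the positive solution of the local equation for $\lambda$ small or large (so that the rescaled ground state of $(P_\lambda)$ is forced to be the ground state of the local equation, a point the paper flags as nontrivial since the rescaling need not a priori preserve ground states when $b\neq0$). Your route is somewhat more information-rich---it would give the full $H^1$-expansion, not just the norms---but requires the radial non-degeneracy of $V_0$, $W_0$ as an input; the paper's route is softer and needs only the ground-state energy characterization.
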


Assume $M\in C^1((0,\infty), \mathbb R)$, we denote $M(0):=\lim_{\lambda\to 0}M(\lambda)$ and $M(\infty):=\lim_{\lambda\to \infty}M(\lambda)$. The following lemma is proved in \cite[Lemma 1.1]{MM-2}.

\smallskip
\noindent
{\bf Lemma 2.1.} {\it Let $\eta>0$ is a constant.  Then the following statements hold true:

\smallskip
(1)  If $M(\lambda)\sim \lambda^\eta$ as $\lambda\to 0$, then there is $\lambda_0>0$ such that  $M'(\lambda)>0$ 
for $\lambda\in (0,\lambda_0)$.

\smallskip
(2)  If $M(\lambda)\sim \lambda^{-\eta}$ as $\lambda\to 0$, then there is $\lambda_0>0$ such that  $M'(\lambda)<0$ 
for $\lambda\in (0,\lambda_0)$. 

\smallskip
(3)  If $M(\lambda)\sim \lambda^{-\eta}$ as $\lambda\to\infty$, then there is $\lambda_\infty>0$ such that  $M'(\lambda)<0$ 
for $\lambda\in (\lambda_\infty, \infty)$.

\smallskip
(4)  If $M(\lambda)\sim \lambda^{\eta}$ as $\lambda\to\infty$, then there is  $\lambda_\infty>0$ such that  $M'(\lambda)>0$ 
for $\lambda\in (\lambda_\infty, \infty)$.}

\smallskip
\noindent
The following corollary is a direct consequence of Theorems 2.1--2.2 and Lemma 2.1.

\noindent
{\bf Corollary 2.1.} {\em Let  $2<q\le p\le 6$, then
\begin{equation}\label{e217}
M(0)= \left\{\begin{array}{rcl} 
0,  \quad \qquad \quad  &{\rm if}& \   \ q<10/3  \    \  {\rm and}  \   \  q\le p< 6,\\
\frac{6-q}{2q}a^{\frac{3}{2}}S_q^{\frac{q}{q-2}},  \  &{\rm if}& \   \ q=10/3  \  \  {\rm and} \   \ 10/3< p<6,\\
\infty,  \qquad  \  \quad \  &{\rm if}& \   \ q>10/3 \    \  {\rm and}  \   \  q\le p<6,\\
 \end{array}
\right.
\end{equation}
and 
\begin{equation}\label{e218}
M(\infty)= \left\{\begin{array}{rcl} 
\infty, \quad \quad \qquad  \qquad  &{\rm if}& \   \ p<14/3 \    \  {\rm and}  \   \  q\le p,\\
\frac{27b^3(p-2)^3(6-p)}{16p^4}S_p^{\frac{4p}{p-2}},\  &{\rm if}& \   \ p=14/3 \  \  {\rm and} \   q<p,\\
0, \quad  \quad  \qquad  \qquad   \   &{\rm if}& \   \ p>14/3 \    \  {\rm and}  \   \  q\le p.
 \end{array}
\right.
\end{equation}
 Moreover,  there exists a small $\lambda_0>0$  such that
for any $\lambda\in (0,\lambda_0)$,
\begin{equation}\label{e219}
 \left\{\begin{array}{rcl} 
M'(\lambda)>0, \quad   &{\rm if}& \   \ q<10/3 \    \  {\rm and}  \   \  q\le p<6,\\
M'(\lambda)<0,  \quad   &{\rm if}& \  \ q=10/3  \    \  {\rm and} \   \  10/3<p<14/3,\\
M'(\lambda)<0,  \quad    &{\rm if}& \  \ q>10/3  \    \  {\rm and} \   \  q\le p,
 \end{array}\right.
 \end{equation}
and there exists a large $\lambda_\infty>0$ such that for any $\lambda\in (\lambda_\infty, +\infty)$, 
\begin{equation}\label{e220}
 \left\{\begin{array}{rcl} 
M'(\lambda)<0, \quad   &{\rm if}& \   \  p<14/3 \   \   {\rm and } \    \  q\le p,\\
M'(\lambda)>0, \quad   &{\rm if}& \   \ p>14/3 \    \  {\rm and}  \   \   q\le p.
 \end{array}
\right.
\end{equation} }

\smallskip
\noindent
{\bf Remark 2.1.}  In the Sobolev critical case $p=2^*$, a similar result as above also holds and in particular, we have
$M(0)=\infty$ and $M(\infty)=0$ if  $N=4$ and $q\in (3,4)$, or $N=3$ and  $q\in (4,6)$; 
$M(0)=M(\infty)=0$ if $N=4$ and $q\in (2,3)$; $M(0)=\infty$ if  $N=3$ and $q\in (10/3,4]$, and $M(0)=0$ if $N=3$ and $q\in (2,10/3)$.
 The sign of $M'(\lambda)$   can also be determined for small $\lambda>0$ or for large $\lambda>0$, and we omitted the details. 
 We mention that in the subcritical case, the sign of $M'(\lambda)$
 is still open in the following cases:
 
(1)   $q=\frac{10}{3}$, $p\in (\frac{14}{3}, 6)$ and $\lambda>0$ small,  \quad   (2)  $p=\frac{14}{3}$, $q\in (2, \frac{14}{3})$ and $\lambda>0$ large.

According to Corollary 2.1 and results concerning the case $b=0$ given in Section 5,  we draw the following figures which  reveal the variations of $M(\lambda)$ for small $\lambda>0$ and large $\lambda>0$ when $(p,q)$ belongs to different regions in the $(p,q)$ plane. Plainly,  there exists a  dramatic change between $b=0$ and $b\not=0$.  This observation results in a striking different  feature in the existence, non-existence and exact number of normalized solutions of \eqref{e15}.

\begin{figure}[h]
	\centering
	\includegraphics[width=1.0\linewidth]{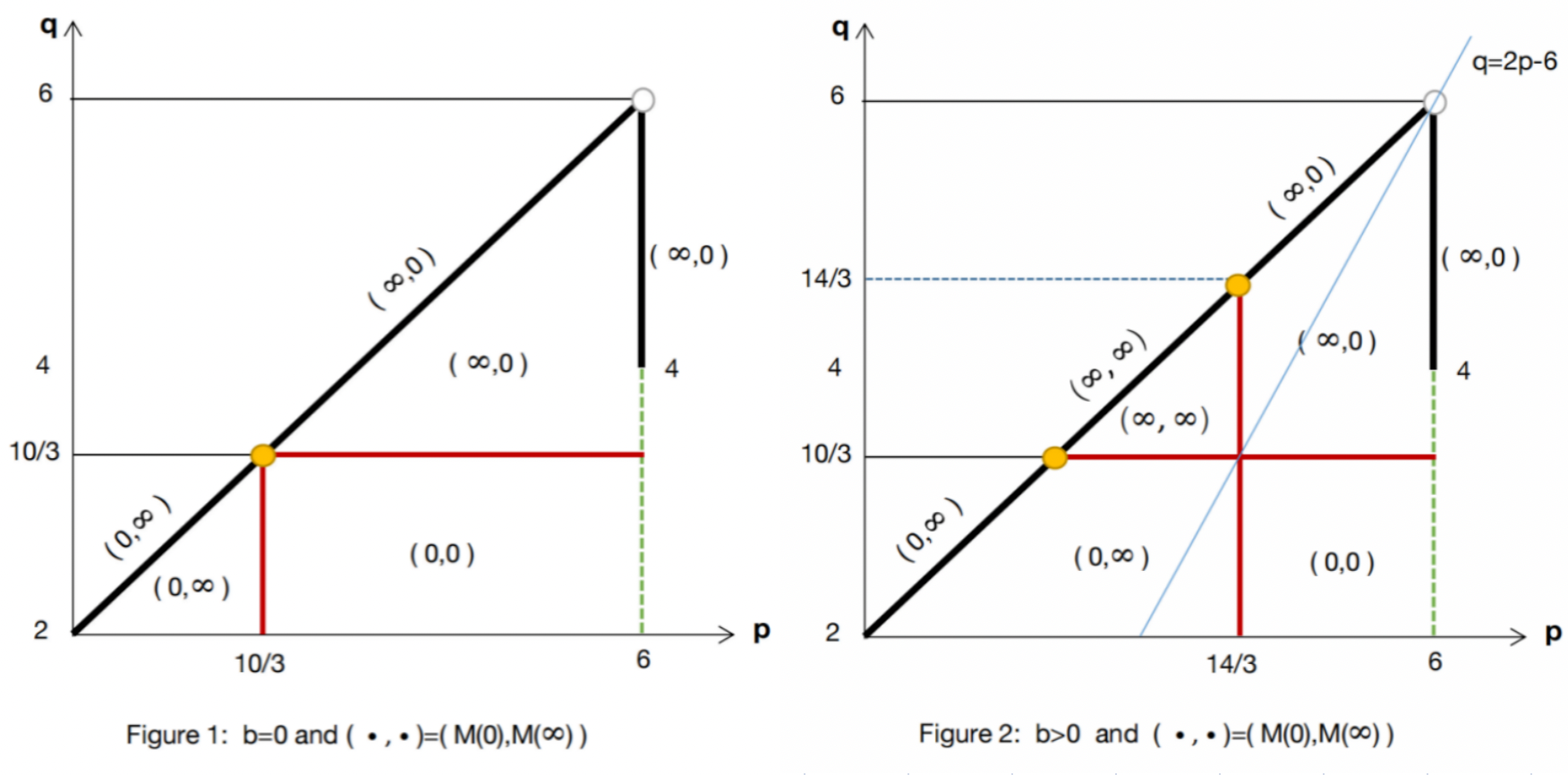}
	\label{fig:figure1-2}
\end{figure}

\section{ Proof of Theorem 2.1 as $\lambda\to 0$}\label{s3}

Let 
\begin{equation}\label{e31}
v(x)=\lambda^{-\frac{1}{q-2}}u(\lambda^{-\frac{1}{2}}x).
\end{equation}
Then the equation $(P_\lambda)$ reduces to 
$$
-a\Delta v+v-b\lambda^{\frac{2N-q(N-2)}{2(q-2)}}\int_{\mathbb R^N}|\nabla v|^2dx \Delta v=v^{q-1}+\lambda^{\frac{2^*-q}{q-2}}v^{2^*-1}.
\eqno{(Q_\lambda)}
$$
The energy functional for $(Q_\lambda)$ is defined by 
\begin{equation}\label{e32}
\begin{array}{lcl}
J_\lambda (v)&=&\frac{a}{2}\int_{\mathbb R^N}|\nabla v|^2+\frac{1}{2}\int_{\mathbb R^N}|v|^2+\frac{b}{4}\lambda^{\frac{2N-q(N-2)}{2(q-2)}}\Big(\int_{\mathbb R^N}|\nabla v|^2\Big)^2\\
&\mbox{}&\qquad 
-\frac{1}{q}\int_{\mathbb R^N}|v|^q-\frac{1}{2^*}\lambda^{\frac{2^*-q}{q-2}}\int_{\mathbb R^N}|v|^{2^*}.
\end{array}
\end{equation}
The formal limit equation for $(Q_\lambda)$ as $\lambda\to 0$ is given by
$$
-a\Delta v+v=v^{q-1} \quad \text{in $\R^N$.}
\eqno(Q_0)
$$
The energy functional for $(Q_0)$ is given by 
$$
J_0(v)=\frac{a}{2}\int_{\mathbb R^N}|\nabla v|^2+\frac{1}{2}\int_{\mathbb R^N}|v|^2
-\frac{1}{q}\int_{\mathbb R^N}|v|^q.
$$

\begin{lemma}\label{l22}
Let $\lambda>0$, $u\in H^1(\mathbb R^N)$ and $v$ is the rescaling \eqref{e31} of $u$.  Then: 

\begin{enumerate}
\item[$(a)$] $ \|\nabla u\|_2^2=\lambda^{\frac{2N-q(N-2)}{2(q-2)}}\|\nabla v\|_2^2,$ \quad $\|u\|_q^q=\lambda^{\frac{2N-q(N-2)}{2(q-2)}}\|v\|_q^q$,\medskip

\item[$(b)$] $\|u\|_2^2=\lambda^{\frac{4-N(q-2)}{2(q-2)}}\|v\|_2^2$,  \quad $\|u\|_{2^*}^{2^*}=\lambda^{\frac{N[2N-q(N-2)]}{2(N-2)(q-2)}}\|v\|_{2^*}^{2^*}$,
\medskip

\item[$(c)$] $I_\lambda(u)=\lambda^{\frac{2N-q(N-2)}{2(q-2)}}J_\lambda(v)$.
\end{enumerate}
\end{lemma}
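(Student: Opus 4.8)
The plan is to verify the scaling identities in Lemma~\ref{l22} directly from the definition \eqref{e31}, since this is a pure change-of-variables computation with no analytic subtlety. Write $v(x)=\lambda^{-\frac{1}{q-2}}u(\lambda^{-\frac{1}{2}}x)$, equivalently $u(y)=\lambda^{\frac{1}{q-2}}v(\lambda^{\frac{1}{2}}y)$, and substitute $y=\lambda^{-\frac12}x$ in each integral. The single book-keeping device needed throughout is: for a function $w$ and the dilation $w_\sigma(x):=w(\sigma x)$ one has $\int_{\R^N}|w_\sigma|^r = \sigma^{-N}\int_{\R^N}|w|^r$ and $\int_{\R^N}|\nabla w_\sigma|^2 = \sigma^{2-N}\int_{\R^N}|\nabla w|^2$; combined with the multiplicative prefactor $\lambda^{\frac{1}{q-2}}$ (which pulls out as $\lambda^{\frac{r}{q-2}}$ from an $L^r$ integral and as $\lambda^{\frac{2}{q-2}}$ from the Dirichlet integral) this produces all the powers of $\lambda$ claimed.

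For part~(a): from $u(y)=\lambda^{\frac{1}{q-2}}v(\lambda^{1/2}y)$ we get $\|\nabla u\|_2^2 = \lambda^{\frac{2}{q-2}}\cdot\lambda\cdot\lambda^{-N/2}\|\nabla v\|_2^2$, and the exponent simplifies: $\frac{2}{q-2}+1-\frac N2 = \frac{4+2(q-2)-N(q-2)}{2(q-2)} = \frac{2N-q(N-2)}{2(q-2)}$ after collecting terms (note $4+2(q-2)=2q$ and $2q-N(q-2)=2N-q(N-2)$). Likewise $\|u\|_q^q = \lambda^{\frac{q}{q-2}}\cdot\lambda^{-N/2}\|v\|_q^q$ and $\frac{q}{q-2}-\frac N2 = \frac{2q-N(q-2)}{2(q-2)} = \frac{2N-q(N-2)}{2(q-2)}$, the same exponent. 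For part~(b): $\|u\|_2^2 = \lambda^{\frac{2}{q-2}}\cdot\lambda^{-N/2}\|v\|_2^2$ with $\frac{2}{q-2}-\frac N2 = \frac{4-N(q-2)}{2(q-2)}$; and $\|u\|_{2^*}^{2^*} = \lambda^{\frac{2^*}{q-2}}\cdot\lambda^{-N/2}\|v\|_{2^*}^{2^*}$, where using $2^*=\frac{2N}{N-2}$ one computes $\frac{2^*}{q-2}-\frac N2 = \frac{2N/(N-2)}{q-2}-\frac N2 = \frac{2N - N(N-2)(q-2)/2}{(N-2)(q-2)}\cdot\frac{1}{1}$; cleaning up gives $\frac{N\big[2N-q(N-2)\big]}{2(N-2)(q-2)}$, matching the claim (one can also just observe $\frac{2^*}{q-2}-\frac N2 = \frac{N}{N-2}\big(\frac{2}{q-2}-\frac{N-2}{2}\big)=\frac{N}{N-2}\cdot\frac{4-(N-2)(q-2)}{2(q-2)}$ and note $4-(N-2)(q-2)=2N-q(N-2)-2(N-2)+4-\ldots$ — better to present it as $\frac{N}{N-2}$ times the $L^2$ exponent's numerator adjusted, but the cleanest route is direct substitution of $2^*$).

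For part~(c): substitute parts~(a) and~(b) into the functional \eqref{e11}. Each of the five terms of $I_\lambda(u)$ becomes $\lambda^{\frac{2N-q(N-2)}{2(q-2)}}$ times the corresponding term of $J_\lambda(v)$ in \eqref{e32} — the quadratic gradient term and the $L^q$ term carry exactly $\lambda^{\frac{2N-q(N-2)}{2(q-2)}}$ by (a); the $\lambda\|u\|_2^2$ term carries $\lambda\cdot\lambda^{\frac{4-N(q-2)}{2(q-2)}} = \lambda^{\frac{4-N(q-2)+2(q-2)}{2(q-2)}} = \lambda^{\frac{2N-q(N-2)}{2(q-2)}}$ after noting $4+2(q-2)=2q$ so the numerator is $2q-N(q-2)$; the Kirchhoff term $\frac b4\|\nabla u\|_2^4$ carries $\lambda^{\frac{2N-q(N-2)}{q-2}} = \lambda^{\frac{2N-q(N-2)}{2(q-2)}}\cdot\lambda^{\frac{2N-q(N-2)}{2(q-2)}}$, which is why the residual factor $\lambda^{\frac{2N-q(N-2)}{2(q-2)}}$ appears in front of $\frac b4(\int|\nabla v|^2)^2$ in \eqref{e32}; and the critical term $\frac{1}{2^*}\|u\|_{2^*}^{2^*}$ carries $\lambda^{\frac{N[2N-q(N-2)]}{2(N-2)(q-2)}} = \lambda^{\frac{2N-q(N-2)}{2(q-2)}}\cdot\lambda^{\frac{2^*-q}{q-2}}$, since $\frac{N}{N-2}-1=\frac{2}{N-2}$ and $\frac{2}{N-2}\cdot\frac{2N-q(N-2)}{2(q-2)} = \frac{2N-q(N-2)}{(N-2)(q-2)} = \frac{2^*-q}{q-2}$ using $2^* - q = \frac{2N}{N-2}-q = \frac{2N-q(N-2)}{N-2}$. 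Factoring out the common $\lambda^{\frac{2N-q(N-2)}{2(q-2)}}$ yields $I_\lambda(u)=\lambda^{\frac{2N-q(N-2)}{2(q-2)}}J_\lambda(v)$. There is no real obstacle here — the only thing requiring a moment's care is consistently simplifying the exponents (using $2q = 4+2(q-2)$ and $2^*-q = \frac{2N-q(N-2)}{N-2}$), and checking that the two exponents appearing inside $J_\lambda$ in \eqref{e32}, namely $\frac{2N-q(N-2)}{2(q-2)}$ and $\frac{2^*-q}{q-2}$, are exactly the leftover powers after pulling out the overall factor.
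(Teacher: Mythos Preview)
Your proof is correct and follows exactly the approach the paper has in mind: the paper itself omits all details, stating only that ``the above lemma is easily proved and the details will be omitted,'' and your direct change-of-variables computation is the natural (and essentially the only) way to carry this out. The exponent bookkeeping is accurate throughout, including the identifications $2q-N(q-2)=2N-q(N-2)$ and $2^*-q=\frac{2N-q(N-2)}{N-2}$ that make the five terms of $I_\lambda$ line up with those of $J_\lambda$.
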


The above lemma is easily proved and the details will be omitted. In particular, it follows from Lemma 3.1 (c) that  the rescaling $v$ of the ground state $u$ of $(P_\lambda)$ corresponds to a ground state of $(Q_\lambda)$.

\begin{lemma}\label{l24-1}
The rescaled family of ground-sates $\{v_\lambda\}$ is bounded in $H^1(\mathbb R^N)$ for small $\lambda>0$.
\end{lemma}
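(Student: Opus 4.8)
The plan is to bound the rescaled ground states $\{v_\lambda\}$ in $H^1(\R^N)$ by combining the variational characterisation of $v_\lambda$ as a ground state of $(Q_\lambda)$ with the Pohozaev and Nehari identities. First I would record the two identities satisfied by $v_\lambda$: the Nehari identity $\langle J_\lambda'(v_\lambda),v_\lambda\rangle=0$, i.e.
\begin{equation*}
a\|\nabla v_\lambda\|_2^2+\|v_\lambda\|_2^2+b\lambda^{\alpha}\|\nabla v_\lambda\|_2^4=\|v_\lambda\|_q^q+\lambda^{\beta}\|v_\lambda\|_{2^*}^{2^*},
\end{equation*}
where $\alpha=\tfrac{2N-q(N-2)}{2(q-2)}>0$ and $\beta=\tfrac{2^*-q}{q-2}>0$, together with the Pohozaev identity for $(Q_\lambda)$. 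Eliminating $\|v_\lambda\|_{2^*}^{2^*}$ (or $\|v_\lambda\|_q^q$) between these two identities yields, for small $\lambda>0$, a coercive combination of $\|\nabla v_\lambda\|_2^2$, $\|v_\lambda\|_2^2$ and $b\lambda^\alpha\|\nabla v_\lambda\|_2^4$ on the left, controlled by the energy $J_\lambda(v_\lambda)$ plus lower order terms on the right.

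Next I would get an upper bound on the ground state energy $J_\lambda(v_\lambda)=\lambda^{-\alpha}m_\lambda$. Using a fixed test function $\varphi\in C_c^\infty(\R^N)$ (for instance the ground state $V_0$ of $(Q_0)$, suitably scaled on its Nehari manifold for $J_\lambda$), one checks that $\max_{t>0}J_\lambda(t\varphi)\le J_0$-level $+\,o(1)$ as $\lambda\to 0$, since the two $\lambda$-dependent terms $b\lambda^\alpha(\cdot)^2$ and $\lambda^\beta\|\cdot\|_{2^*}^{2^*}$ vanish in the limit; hence $J_\lambda(v_\lambda)\le C$ uniformly for small $\lambda$. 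Plugging this into the identity from the previous step gives
\begin{equation*}
\|\nabla v_\lambda\|_2^2+\|v_\lambda\|_2^2+b\lambda^\alpha\|\nabla v_\lambda\|_2^4\le C
\end{equation*}
for small $\lambda>0$, which is exactly the desired $H^1$-bound (and a bit more).

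The main obstacle I anticipate is precisely the feature the authors flag in the introduction: because $(Q_\lambda)$ has five terms, the Pohozaev--Nehari system cannot be solved explicitly, so one must argue carefully that the particular linear combination one extracts is genuinely coercive in $\|\nabla v_\lambda\|_2^2$ and $\|v_\lambda\|_2^2$ with constants independent of $\lambda$, and that the critical term $\lambda^\beta\|v_\lambda\|_{2^*}^{2^*}$ — which a priori could be large even if its coefficient is small — is absorbed. This is where one uses that the coefficient $\lambda^\beta\to 0$ and the Sobolev inequality $\|v_\lambda\|_{2^*}^{2^*}\le S^{-2^*/2}\|\nabla v_\lambda\|_2^{2^*}$, together with the fact that $b\lambda^\alpha\|\nabla v_\lambda\|_2^4\ge 0$, to close the estimate by a standard absorption/bootstrap argument: either $\|\nabla v_\lambda\|_2$ stays bounded, or one derives a contradiction with the energy upper bound. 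A subtlety worth handling explicitly is that, when $b\neq0$, the Pohozaev identity for $(Q_\lambda)$ carries the nonlocal term with a different power of $\|\nabla v_\lambda\|_2^2$ than the Nehari identity, so the elimination must be done keeping track of the sign of each resulting coefficient; one checks these signs are favourable in the regime $2<q<2^*$, $N=3,4$.

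Finally, once boundedness is in hand one notes as a byproduct that $\liminf_{\lambda\to0}\|v_\lambda\|_q^q>0$ (otherwise the Nehari identity forces $v_\lambda\to0$ in $H^1$, contradicting the strictly positive energy lower bound coming from $S_q$), which sets up the concentration--compactness argument in the subsequent lemmas; but for the present statement only the upper bound is needed.
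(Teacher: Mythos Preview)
Your outline is on the right track and would likely succeed, but you are anticipating a difficulty that does not actually arise, and the paper's argument is correspondingly cleaner. The key observation you are missing is that, because $p=2^*$, the gradient term, the nonlocal Kirchhoff term, and the critical $L^{2^*}$ term all appear in the Nehari and Poho\v zaev identities with the \emph{same} ratio of coefficients, namely $1$ versus $1/2^*$. Hence subtracting $\tfrac{1}{2^*}$ times the Nehari identity from the Poho\v zaev identity eliminates all three simultaneously, yielding the exact relation
\[
\Big(\tfrac{1}{2}-\tfrac{1}{2^*}\Big)\|v_\lambda\|_2^2=\Big(\tfrac{1}{q}-\tfrac{1}{2^*}\Big)\|v_\lambda\|_q^q,
\]
with no residual $\lambda^\beta\|v_\lambda\|_{2^*}^{2^*}$ term to absorb. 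Interpolation then bounds $\|v_\lambda\|_2$ by a power of $\|\nabla v_\lambda\|_2$, reducing the $H^1$ bound to a $D^{1,2}$ bound. Similarly, subtracting the Poho\v zaev identity from the energy gives the clean lower bound
\[
m_\lambda=\Big(\tfrac{1}{2}-\tfrac{1}{2^*}\Big)a\|\nabla v_\lambda\|_2^2+\Big(\tfrac{1}{4}-\tfrac{1}{2^*}\Big)b\lambda^\alpha\|\nabla v_\lambda\|_2^4\ge \tfrac{a}{N}\|\nabla v_\lambda\|_2^2,
\]
again with no critical term left over (note $\tfrac{1}{4}-\tfrac{1}{2^*}\ge 0$ for $N=3,4$). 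Combined with the test-function upper bound $m_\lambda\le C$ that you correctly describe, this finishes the proof with no absorption or bootstrap needed. So your worry about the ``five terms'' obstruction is unfounded at this particular step: the algebra resolves perfectly precisely because the top exponent is $2^*$. Your Sobolev-absorption argument would also work, but it is unnecessary extra effort.
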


\begin{proof} It is standard to see that ground-sates of $(Q_\lambda)$ satisfy the Nehari identity 
$$
a\int_{\mathbb R^N}|\nabla v_\lambda|^2+\int_{\mathbb R^N}|v_\lambda|^2+b\lambda^{\frac{2N-q(N-2)}{2(q-2)}}(\int_{\mathbb R^N}|\nabla v_\lambda|^2)^2=\int_{\mathbb R^N}|v_\lambda|^q+\lambda^{\frac{2^*-q}{q-2}}\int_{\mathbb R^N}|v_\lambda|^{2^*}
$$
and the Poho\v zaev identity
$$
\frac{a}{2^*}\int_{\mathbb R^N}|\nabla v_\lambda|^2+\frac{1}{2}\int_{\mathbb R^N}|v_\lambda|^2+\frac{b}{2^*}\lambda^{\frac{2N-q(N-2)}{2(q-2)}}(\int_{\mathbb R^N}|\nabla v_\lambda|^2)^2=\frac{1}{q}\int_{\mathbb R^N}|v_\lambda|^q+\frac{1}{2^*}\lambda^{\frac{2^*-q}{q-2}}\int_{\mathbb R^N}|v_\lambda|^{2^*}.
$$
Therefore, it follows that
\begin{equation}\label{e33}
\left(\frac{1}{2}-\frac{1}{2^*}\right)\int_{\mathbb R^N}|v_\lambda|^2=\left(\frac{1}{q}-\frac
{1}{2^*}\right)\int_{\mathbb R^N}|v_\lambda|^q,
\end{equation}
and hence
$$
\frac{1}{N}\int_{\mathbb R^N}|v_\lambda|^2=\frac{2^*-q}{2^*q}\int_{\mathbb R^N}|v_\lambda|^q\le \frac{2^*-q}{2^*q}(\int_{\mathbb R^N}|v_\lambda|^2)^{\frac{2^*-q}{2^*-2}}
(\frac{1}{S}\int_{\mathbb R^N}|\nabla v_\lambda|^2)^{\frac{2^*(q-2)}{2(2^*-2)}},
$$
which implies that
\begin{equation}\label{e34}
\frac{1}{N}(\int_{\mathbb R^N}|v_\lambda|^2)^{\frac{q-2}{2^*-2}}\le \frac{2^*-q}{2^*q}(\frac{1}{S}\int_{\mathbb R^N}|\nabla v_\lambda|^2)^{\frac{2^*(q-2)}{2(2^*-2)}}.
\end{equation}
 
 To prove the boundedness of $\{v_\lambda\}$ in $H^1(\mathbb R^N)$, it suffices to show that $\{v_\lambda\}$ is bounded in $D^{1,2}(\mathbb R^N)$.

Let $v_0$ be the unique positive solution of the equation $(Q_0)$, then by the  Poho\v zaev's identity, we have 
\begin{equation}\label{e35}
\frac{1}{2}\int_{\mathbb R^N}|v_0|^2<\frac{a}{2^*}\int_{\mathbb R^N}|\nabla v_0|^2+\frac{1}{2}\int_{\mathbb R^N}|v_0|^2=\frac{1}{q}\int_{\mathbb R^N}|v_0|^q,
\end{equation}
which implies that 
for small $\lambda>0$, there is a unique $t_\lambda>0$ such that
\begin{equation}\label{e36}
\begin{array}{cl}
&\frac{a}{2^*t^2}\int_{\R^N}|\nabla v_0|^2+\frac{1}{2}\int_{\R^N}|v_0|^2+\frac{b}{2^*t^{4-N}}\lambda^{\frac{2N-q(N-2)}{2(q-2)}}(\int_{\R^N}|\nabla v_0|^2)^2\\
&\quad =\frac{1}{q}\int_{\R^N}|v_0|^q+\frac{1}{2^*}\lambda^{\frac{2^*-q}{q-2}}\int_{\R^N}|v_0|^{2^*}.
\end{array}
\end{equation}
If $N=3$ and $t_\lambda>1$, then 
$$
\begin{array}{lcl}
t_\lambda&\le&\frac{ \frac{1}{2^*}a\int_{\R^N}|\nabla v_0|^2+\frac{1}{2^*}b\lambda^{\frac{2N-q(N-2)}{2(q-2)}}(\int_{\R^N}|\nabla v_0|^2)^2}
{\frac{1}{q}\int_{\R^N}|v_0|^q-\frac{1}{2}\int_{\R^N}|v_0|^2+\frac{1}{2^*}\lambda^{\frac{2^*-q}{q-2}}\int_{\R^N}|v_0|^{2^*}}\\
&=&\frac{ a\int_{\R^N}|\nabla v_0|^2+b\lambda^{\frac{2N-q(N-2)}{2(q-2)}}(\int_{\R^N}|\nabla v_0|^2)^2}
{a\int_{\R^N}|\nabla v_0|^2+\lambda^{\frac{2^*-q}{q-2}}\int_{\R^N}|v_0|^{2^*}}\\
&<&C<\infty.
\end{array}
$$
If $N=4$ and $t_\lambda>1$, then
$$
\begin{array}{lcl}
t_\lambda^2&=&\frac{\frac{1}{2^*}a\int_{\R^N}|\nabla v_0|^2}{\frac{1}{q}\int_{\R^N}|v_0|^q-\frac{1}{2}\int_{\R^N}|v_0|^2+\frac{1}{2^*}\lambda^{\frac{2^*-q}{q-2}}\int_{\R^N}|v_0|^{2^*}
-\frac{1}{2^*}b\lambda^{\frac{2N-q(N-2)}{2(q-2)}}(\int_{\R^N}|\nabla v_0|^2)^2}\\
&=&\frac{a\int_{\R^N}|\nabla v_0|^2}{a\int_{\R^N}|\nabla v_0|^2+\lambda^{\frac{2^*-q}{q-2}}\int_{\R^N}|v_0|^{2^*}
-b\lambda^{\frac{2N-q(N-2)}{2(q-2)}}(\int_{\R^N}|\nabla v_0|^2)^2}\\
&<&C<\infty.
\end{array}
$$
Therefore, we obtain 
\begin{equation}\label{e37}
\begin{array}{lcl}
m_\lambda&\le &\sup_{t>0}J_\lambda((v_0)_t)\\
&=&\sup_{t>0}\frac{a}{2}t^{N-2}\int_{\R^N}|\nabla v_0|^2+\frac{1}{2}t^N\int_{\R^N}|v_0|^2+\frac{b}{4}t^{2(N-2)}\lambda^{\frac{2N-q(N-2)}{2(q-2)}}(\int_{\R^N}|\nabla v_0|^2)^2\\
&\mbox{}& \  -\frac{1}{q}t^N\int_{\R^N}|v_0|^q-\frac{1}{2^*}t^N\lambda^{\frac{2^*-q}{q-2}}\int_{\R^N}|v_0|^{2^*}\\
 &\le & \sup_{t>0}J_0((v_0)_t)++\frac{b}{4}t_\lambda^{2(N-2)}\lambda^{\frac{2N-q(N-2)}{2(q-2)}}(\int_{\R^N}|\nabla v_0|^2)^2
-\frac{1}{2^*}t_\lambda^N\lambda^{\frac{2^*-q}{q-2}}\int_{\R^N}|v_0|^{2^*}\\ 
&\le &m_0+C\lambda^{\frac{2N-q(N-2)}{2(q-2)}}.
\end{array}
\end{equation}
This prove that $m_\lambda\le C<+\infty$ for all small $\lambda>0$.

On the other hand, we have 
 $$
\begin{array}{rcl}
m_\lambda:=J_\lambda (v_\lambda)&=&\frac{a}{2}\int_{\mathbb R^N}|\nabla v_\lambda|^2+\frac{1}{2}\int_{\mathbb R^N}|v_\lambda|^2+\frac{b}{4}\lambda^{\frac{2N-q(N-2)}{2(q-2)}}(\int_{\mathbb R^N}|\nabla v_\lambda|^2)^2\\
&\mbox{}& \quad 
-\frac{1}{q}\int_{\mathbb R^N}|v_\lambda|^q-\frac{1}{2^*}\lambda^{\frac{2^*-q}{q-2}}\int_{\mathbb R^N}|v_\lambda|^{2^*}\\
&=&(\frac{1}{2}-\frac{1}{2^*})a\int_{\mathbb R^N}|\nabla v_\lambda|^2+(\frac{1}{4}-\frac{1}{2^*})b\lambda^{\frac{2N-q(N-2)}{2(q-2)}}(\int_{\mathbb R^N}|\nabla v_\lambda|^2)^2\\
&\ge & \frac{1}{N}a\int_{\mathbb R^N}|\nabla v_\lambda|^2.
\end{array}
$$
This yields the boundedness of $\|\nabla v_\lambda\|_2$  for small $\lambda>0$  and completes the proof.
\end{proof}

\begin{lemma}\label{l23}
Set
$$
v_t(x)=\left\{\begin{array}{ccl} v(\frac{x}{t}) &\text{if}&  t>0,\smallskip\\
0 &\text{if}& t=0.
\end{array}\right.
$$
Then for small $\lambda>0$,  there holds
\begin{equation}\label{e38}
m_\lambda=\inf_{v\in H^1(\mathbb R^N)\setminus\{0\}}\sup_{t\ge 0}J_\lambda(tv)=\inf_{v\in H^1(\mathbb R^N)\setminus\{0\}}\sup_{t\ge 0}J_\lambda(v_t).
\end{equation}
In particular, we have $m_\lambda=J_\lambda(v_\lambda)=\sup_{t>0}J_\lambda(tv_\lambda)=\sup_{t>0}J_\lambda((v_\lambda)_t)$. 
\end{lemma}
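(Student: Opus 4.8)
The plan is to read both identities as Nehari--Poho\v zaev minimax characterisations of the least energy $m_\lambda$ of $(Q_\lambda)$; recall that, by Lemma~\ref{l22}(c), the rescaling \eqref{e31} turns the ground states $u_\lambda$ of $(P_\lambda)$ into ground states $v_\lambda$ of $(Q_\lambda)$, of energy $m_\lambda$. Fix $v\in H^1(\mathbb R^N)\setminus\{0\}$ and consider the two fibering maps $\varphi_v(t):=J_\lambda(tv)$ and $\psi_v(t):=J_\lambda(v_t)$. Expanding and abbreviating $\vartheta:=\tfrac{2N-q(N-2)}{2(q-2)}>0$, $\theta:=\tfrac{2^*-q}{q-2}>0$,
\[
\varphi_v(t)=\tfrac a2 t^2\|\nabla v\|_2^2+\tfrac12 t^2\|v\|_2^2+\tfrac b4\lambda^{\vartheta}t^4\|\nabla v\|_2^4-\tfrac1q t^q\|v\|_q^q-\tfrac1{2^*}\lambda^{\theta}t^{2^*}\|v\|_{2^*}^{2^*},
\]
\[
\psi_v(t)=\tfrac a2 t^{N-2}\|\nabla v\|_2^2+\tfrac b4\lambda^{\vartheta}t^{2N-4}\|\nabla v\|_2^4+t^N\Big(\tfrac12\|v\|_2^2-\tfrac1q\|v\|_q^q-\tfrac1{2^*}\lambda^{\theta}\|v\|_{2^*}^{2^*}\Big).
\]
Both vanish at $t=0$ and are positive for small $t>0$, and a direct computation shows $\varphi_v'(1)=0$ is the Nehari identity for $v$ and $\psi_v'(1)=0$ is the Poho\v zaev identity for $v$ (both appearing in the proof of Lemma~\ref{l24-1}); in particular every ground state $v_\lambda$ is a critical point of $\varphi_{v_\lambda}$ and of $\psi_{v_\lambda}$ at $t=1$.

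To get ``$\inf_v\sup_t(\cdot)\le m_\lambda$'' I would prove that for small $\lambda>0$ the point $t=1$ is the \emph{global maximum} of both $\varphi_{v_\lambda}$ and $\psi_{v_\lambda}$; then $m_\lambda=J_\lambda(v_\lambda)=\sup_{t\ge0}\varphi_{v_\lambda}(t)=\sup_{t\ge0}\psi_{v_\lambda}(t)$ and the two inequalities follow by taking the infimum over $v$. Subtracting the Poho\v zaev identity from the Nehari identity gives \eqref{e33}, hence $\tfrac12\|v_\lambda\|_2^2-\tfrac1q\|v_\lambda\|_q^q=\tfrac{2-q}{2(2^*-q)}\|v_\lambda\|_2^2<0$, so the $t^N$-coefficient of $\psi_{v_\lambda}$ is negative and $\psi_{v_\lambda}(t)\to-\infty$ as $t\to\infty$; for $N=3$ the $-t^{2^*}$ term likewise forces $\varphi_{v_\lambda}(t)\to-\infty$. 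By Lemma~\ref{l24-1} the family $\{v_\lambda\}$ is bounded in $H^1$ for small $\lambda$, so (as $\vartheta,\theta>0$) the $\lambda$-dependent coefficients of $\varphi_{v_\lambda},\psi_{v_\lambda}$ are $o(1)$ as $\lambda\to0$; the two maps are thus uniform-on-compacta perturbations of the $(b=0,\lambda=0)$ fibering maps attached to the limit profile $v_0$ of $(Q_0)$, whose derivatives change sign exactly once. Together with $\varphi_{v_\lambda}'(1)=\psi_{v_\lambda}'(1)=0$ this makes $t=1$ the unique positive critical point and the global maximiser of each, as required.

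For ``$\inf_v\sup_t(\cdot)\ge m_\lambda$'', fix $v\ne0$. If $\sup_{t\ge0}\varphi_v(t)=+\infty$ there is nothing to prove; otherwise (one checks $\varphi_v\to-\infty$ whenever the sup is finite) $\varphi_v$ attains a positive maximum at some $t_0>0$ with $\varphi_v'(t_0)=0$, i.e.\ $t_0v\in\mathcal N_\lambda:=\{w\ne0:\langle J_\lambda'(w),w\rangle=0\}$, whence $\sup_t\varphi_v=J_\lambda(t_0v)\ge\inf_{\mathcal N_\lambda}J_\lambda$. A standard natural-constraint argument (for small $\lambda$ the Kirchhoff coefficient is small, so $\mathcal N_\lambda$ is locally a $C^1$-manifold and its minimisers are free critical points of $J_\lambda$) identifies $\inf_{\mathcal N_\lambda}J_\lambda$ with $m_\lambda$; taking the infimum over $v$ gives the claim. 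Running the same argument with $\psi_v$ and the Poho\v zaev set $\mathcal P_\lambda:=\{w\ne0:\psi_w'(1)=0\}$ in place of $\varphi_v,\mathcal N_\lambda$ handles the dilation scaling, and the two bounds together prove \eqref{e38}; the ``in particular'' statement is the content of the previous paragraph.

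The main obstacle is the global shape of the fibering maps once $b>0$: the Kirchhoff quartic $\tfrac b4\lambda^{\vartheta}t^4\|\nabla v\|_2^4$ and the Sobolev-critical term $\tfrac1{2^*}\lambda^{\theta}t^{2^*}\|v\|_{2^*}^{2^*}$ are both present and, when $N=4$, have the \emph{same} homogeneity in $t$ (indeed $\vartheta=\theta$ there), so $\varphi_v$ need not be of mountain-pass type and $\mathcal N_\lambda$ need not be a global manifold. This is overcome by combining the smallness of $\lambda$ (which kills the $\lambda^{\vartheta},\lambda^{\theta}$ coefficients on $H^1$-bounded sets), the uniform bound of Lemma~\ref{l24-1}, and the rigid relation \eqref{e33}; in the $N=4$ amplitude-scaling case one must additionally verify, from \eqref{e33} and Sobolev's inequality, that the combined $t^4$-coefficient $\tfrac14\lambda^{\theta}\big(b\|\nabla v_\lambda\|_2^4-\|v_\lambda\|_4^4\big)$ of $\varphi_{v_\lambda}$ is non-positive, which is the most delicate point and the one place where $N=4$ genuinely differs from $N=3$.
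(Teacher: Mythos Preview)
The paper omits the proof entirely, referring to \cite[Lemma 3.2]{MM-2}, so there is no in-paper argument to compare against; your outline is the standard Nehari/Poho\v zaev fibering approach and is what one expects that reference to contain. Two points deserve correction.

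\textbf{Circularity.} In your ``$\le$'' argument you compare $\varphi_{v_\lambda},\psi_{v_\lambda}$ to the fibering maps ``attached to the limit profile $v_0$''. At this stage of the paper $v_\lambda\to v_0$ is not yet known (it comes from Lemmas~\ref{l24-2}--\ref{l25}, which in turn invoke Lemma~\ref{l23}). The perturbation argument must be run using only the $H^1$-bound from Lemma~\ref{l24-1}: compare $\varphi_{v_\lambda}$ with the $J_0$-fibering map \emph{at $v_\lambda$}, namely $t\mapsto \tfrac12 t^2(a\|\nabla v_\lambda\|_2^2+\|v_\lambda\|_2^2)-\tfrac1q t^q\|v_\lambda\|_q^q$, whose unique critical point is nondegenerate uniformly because $\|v_\lambda\|_q^q$ is bounded below (from the Nehari identity and the $H^1$-bound). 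This is a minor rewriting.

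\textbf{The $N=4$ Nehari obstruction is not removable the way you suggest.} You correctly isolate that for $N=4$ the amplitude fibering $\varphi_{v_\lambda}$ has leading coefficient $\tfrac{\lambda^\theta}{4}\big(b\|\nabla v_\lambda\|_2^4-\|v_\lambda\|_4^4\big)$ at $t^4$, and claim it is non-positive ``from \eqref{e33} and Sobolev's inequality''. But Sobolev gives $\|v\|_4^4\le S^{-2}\|\nabla v\|_2^4$, an \emph{upper} bound on $\|v\|_4^4$, which goes the wrong way; and \eqref{e33} relates only $\|v_\lambda\|_2$ and $\|v_\lambda\|_q$. Since the limit equation $(Q_0)$ is independent of $b$, for $b$ larger than the fixed ratio $\|v_0\|_4^4/\|\nabla v_0\|_2^4$ one expects $b\|\nabla v_\lambda\|_2^4>\|v_\lambda\|_4^4$ for small $\lambda$, whence $\sup_{t\ge0}J_\lambda(tv_\lambda)=+\infty$ and both the ``in particular'' claim and your ``$\le$'' step fail along the Nehari fibering. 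The Poho\v zaev fibering is unaffected: your computation shows that the $t^N$-coefficient of $\psi_{v_\lambda}$ equals $\tfrac{2-q}{2q(2^*-q)}\|v_\lambda\|_2^2-\tfrac{1}{2^*}\lambda^\theta\|v_\lambda\|_{2^*}^{2^*}+\tfrac{b}{4}\lambda^\vartheta\|\nabla v_\lambda\|_2^4$ when $N=4$, which is strictly negative for small $\lambda$ by the $H^1$-bound, so $\psi_{v_\lambda}$ has $t=1$ as its unique global maximum. The downstream use in Lemma~\ref{l24-2} can therefore be carried out entirely with the dilation scaling $v\mapsto v_t$ (and with the $J_0$-comparison done along the same scaling), avoiding the Nehari map; alternatively one only needs $J_\lambda(t_\lambda v_\lambda)\le m_\lambda$ for the specific $t_\lambda\to1$ computed there, and a second-derivative check $\varphi_{v_\lambda}''(1)=(2-q)\|v_\lambda\|_q^q+O(\lambda^\theta)<0$ shows $t=1$ is a strict local maximum, which suffices.
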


The proof of Lemma 3.3  is similar to that of  \cite[Lemma 3.2]{MM-2} and is omitted. Next we obtain an estimation of the least energy.

\begin{lemma}\label{l24-2}
Let $N=3$ or $4$, and $u_\lambda$ is a ground state solution of $(P_\lambda)$, then 
\begin{equation}\label{e39}
m_\lambda-m_0=O(\lambda^{\frac{2N-q(N-2)}{2(q-2)}}),
\end{equation}
as $\lambda\to 0$, where $m_0:=\inf_{v\in H^1(\mathbb R^N)\setminus\{0\}}\sup_{t\ge 0}J_0(tv)$ is the ground state energy for $(Q_0)$.
\end{lemma}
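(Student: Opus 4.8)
The plan is to combine the upper bound $m_\lambda\le m_0+C\lambda^{\kappa}$, with $\kappa:=\tfrac{2N-q(N-2)}{2(q-2)}$ (note $\kappa>0$ since $q<2^*$), which is already contained in \eqref{e37}, with a matching lower bound $m_\lambda\ge m_0-C\lambda^{\kappa}$ obtained by testing $J_0$ against rescalings of $v_\lambda$ itself. The dimensional restriction $N\in\{3,4\}$ enters through the inequality $\tfrac{2^*-q}{q-2}=\tfrac{2\kappa}{N-2}\ge\kappa$: on bounded subsets of $H^1$ both terms by which $J_\lambda$ differs from $J_0$, namely $\tfrac b4\lambda^{\kappa}(\int|\nabla v|^2)^2$ and $\tfrac1{2^*}\lambda^{(2^*-q)/(q-2)}\int|v|^{2^*}$, are then $O(\lambda^{\kappa})$, and this is what produces the sharp rate in \eqref{e39} (for $N\ge5$ the critical term is larger and the argument breaks).

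First I would record that $\|\nabla v_\lambda\|_2\sim1$ for small $\lambda$. The upper bound is Lemma \ref{l24-1}; for the lower bound, substituting the Sobolev embedding $H^1\hookrightarrow L^q\cap L^{2^*}$ into the Nehari identity for $(Q_\lambda)$ (displayed in the proof of Lemma \ref{l24-1}) and using $\lambda^{(2^*-q)/(q-2)}\to0$ forces $\|v_\lambda\|_{H^1}\gtrsim1$, after which \eqref{e34} upgrades this to $\|\nabla v_\lambda\|_2\gtrsim1$. This step cannot be skipped: if $\|\nabla v_\lambda\|_2\to0$ along a subsequence then, by the Nehari--Poho\v zaev identity $m_\lambda=(\tfrac12-\tfrac1{2^*})a\|\nabla v_\lambda\|_2^2+(\tfrac14-\tfrac1{2^*})b\lambda^{\kappa}(\|\nabla v_\lambda\|_2^2)^2$ established in the proof of Lemma \ref{l24-1}, one would have $m_\lambda\to0<m_0$ and the lemma would fail.

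Next I would estimate $\sup_{t\ge0}J_0((v_\lambda)_t)$, where $(\cdot)_t$ is the dilation of Lemma \ref{l23}. From the Poho\v zaev identity of $(Q_\lambda)$ together with $\|v_\lambda\|_{2^*}^{2^*}\lesssim\|\nabla v_\lambda\|_2^{2^*}\lesssim1$ and $\tfrac{2^*-q}{q-2}\ge\kappa$ one gets
$$
\tfrac1q\|v_\lambda\|_q^q-\tfrac12\|v_\lambda\|_2^2=\tfrac a{2^*}\|\nabla v_\lambda\|_2^2+O(\lambda^{\kappa}),
$$
whose right-hand side is $\sim1>0$ for small $\lambda$ by the previous step. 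Since $J_0((v_\lambda)_t)=\tfrac a2\|\nabla v_\lambda\|_2^2\,t^{N-2}-\big(\tfrac1q\|v_\lambda\|_q^q-\tfrac12\|v_\lambda\|_2^2\big)t^N$, maximising the elementary function $t\mapsto At^{N-2}-Dt^N$ (its unique positive maximiser is $t_\lambda=1+O(\lambda^{\kappa})$, using $\|\nabla v_\lambda\|_2\sim1$) gives $\sup_{t\ge0}J_0((v_\lambda)_t)=\tfrac aN\|\nabla v_\lambda\|_2^2+O(\lambda^{\kappa})$. Because Lemma \ref{l23} also holds for $\lambda=0$, $m_0=\inf_{v\neq0}\sup_{t\ge0}J_0(v_t)\le\sup_{t\ge0}J_0((v_\lambda)_t)$, hence $\tfrac aN\|\nabla v_\lambda\|_2^2\ge m_0-C\lambda^{\kappa}$.

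Finally, the identity $m_\lambda=(\tfrac12-\tfrac1{2^*})a\|\nabla v_\lambda\|_2^2+(\tfrac14-\tfrac1{2^*})b\lambda^{\kappa}(\|\nabla v_\lambda\|_2^2)^2=\tfrac aN\|\nabla v_\lambda\|_2^2+O(\lambda^{\kappa})$ from the proof of Lemma \ref{l24-1}, combined with the previous display, yields $m_\lambda\ge m_0-C\lambda^{\kappa}$; together with \eqref{e37} this proves \eqref{e39}. I expect the only genuine difficulty to be the nondegeneracy $\|\nabla v_\lambda\|_2\sim1$ together with the bookkeeping that keeps every error term at the scale $\lambda^{\kappa}$ rather than merely $o(1)$ — both of which rely on $N\le4$.
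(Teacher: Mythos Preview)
Your proof is correct but follows a different route from the paper. For the lower bound $m_0\le m_\lambda+C\lambda^\kappa$, the paper tests along the \emph{Nehari} fiber $t\mapsto tv_\lambda$: it writes $m_0\le J_0(t_\lambda v_\lambda)=J_\lambda(t_\lambda v_\lambda)-\tfrac b4 t_\lambda^4\lambda^\kappa\|\nabla v_\lambda\|_2^4+\tfrac1{2^*}t_\lambda^{2^*}\lambda^{(2^*-q)/(q-2)}\|v_\lambda\|_{2^*}^{2^*}\le m_\lambda+C\lambda^\kappa$, where $t_\lambda^{q-2}=\|v_\lambda\|^2/\|v_\lambda\|_q^q$ is estimated via the Nehari identity for $(Q_\lambda)$ and only the \emph{upper} bound on $\|v_\lambda\|_{H^1}$ from Lemma~\ref{l24-1} is needed. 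You instead test along the \emph{dilation} fiber $t\mapsto (v_\lambda)_t$, compute $\sup_t J_0((v_\lambda)_t)=\tfrac aN\|\nabla v_\lambda\|_2^2+O(\lambda^\kappa)$ explicitly from the Poho\v zaev identity, and match this against the identity $m_\lambda=\tfrac aN\|\nabla v_\lambda\|_2^2+O(\lambda^\kappa)$. Your approach is slightly longer because it requires the additional lower bound $\|\nabla v_\lambda\|_2\gtrsim1$ (to ensure $D>0$ and $t_\lambda=1+O(\lambda^\kappa)$), which the paper's Nehari-fiber argument sidesteps; on the other hand, your computation already isolates the quantity $\tfrac aN\|\nabla v_\lambda\|_2^2$ that drives the sharper estimates of Lemma~\ref{l25}. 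Both arguments use the restriction $N\le4$ in exactly the same place, via $(2^*-q)/(q-2)=\tfrac{2\kappa}{N-2}\ge\kappa$.
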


\begin{proof}  
By \eqref{e37}, we have 
$$
m_\lambda\le m_0+C\lambda^{\frac{2N-q(N-2)}{2(q-2)}}.
$$
On the other hand, by Lemma 3.3, we have 
$$
\begin{array}{lcl}
m_0&\le& \sup_{t\ge 0}J_0(tv_\lambda)=J_0(t_\lambda v_\lambda)\\
&\le& \sup_{t\ge 0}J_\lambda(tv_\lambda)-\frac{b}{4}t_\lambda^4\lambda^{\frac{2N-q(N-2)}{2(q-2)}}(\int_{\mathbb R^N}|\nabla v_\lambda|^2)^2+\frac{1}{2^*}t_\lambda^{2^*}\lambda^{\frac{2^*-q}{q-2}}\int_{\mathbb R^N}|v_\lambda|^{2^*}
\end{array}
$$
where 
$$
\begin{array}{lcl}
t_\lambda&=&\frac{a\int_{\mathbb R^N}|\nabla v_\lambda|^2+\int_{\mathbb R^N}|v_\lambda|^2}{\int_{\mathbb R^N}|v_\lambda|^q}\\
&=&\frac{\|v_\lambda\|^2}{\|v_\lambda\|^2+b\lambda^{\frac{2N-q(N-2)}{2(q-2)}}(\int_{\mathbb R^N}|\nabla v_\lambda|^2)^2-\lambda^{\frac{2^*-q}{q-2}}\int_{\mathbb R^N}|v_\lambda|^{2^*}}\\
&\le &\frac{1}{1-C\lambda^{\frac{2^*-q}{q-2}}\|v_\lambda\|^{2^*-2}}\\
&\le &1+C\lambda^{\frac{2^*-q}{q-2}}.
\end{array}
$$
Therefore, we get
$$
m_0\le m_\lambda+C\lambda^{\frac{2N-q(N-2)}{2(q-2)}}.
$$
The proof is complete.
\end{proof}

\begin{lemma}\label{l25}
Let $N=3$ or $4$, and $u_\lambda$ is a ground state solution of $(P_\lambda)$, then 
\begin{equation}\label{e29-2}
\int_{\mathbb R^N}|v_\lambda|^q=\int_{\mathbb R^N}|v_0|^q+O(\lambda^{\frac{2N-q(N-2)}{2(q-2)}}),
\end{equation}
\begin{equation}\label{e310}
\int_{\mathbb R^N}|v_\lambda|^2=\int_{\mathbb R^N}|v_0|^2+O(\lambda^{\frac{2N-q(N-2)}{2(q-2)}}),
\end{equation}
\begin{equation}\label{e311}
\int_{\mathbb R^N}|\nabla v_\lambda|^2=\int_{\mathbb R^N}|\nabla v_0|^2+O(\lambda^{\frac{2N-q(N-2)}{2(q-2)}}),
\end{equation}
as $\lambda\to 0$.
\end{lemma}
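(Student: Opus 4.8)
The plan is to read off all three expansions algebraically from the Nehari and Poho\v zaev identities for $(Q_\lambda)$ already recorded in the proof of Lemma~\ref{l24-1}, using in addition only the $H^1$-boundedness of $\{v_\lambda\}$ (Lemma~\ref{l24-1}) and the energy estimate $m_\lambda-m_0=O(\lambda^{\alpha})$ (Lemma~\ref{l24-2}); here and below I write $\alpha:=\frac{2N-q(N-2)}{2(q-2)}$ and $\beta:=\frac{2^*-q}{q-2}$. Since $\beta=\frac{2}{N-2}\alpha$, for $N=3,4$ and $q\in(2,2^*)$ we have $\beta\ge\alpha>0$, so $\lambda^{\beta}=O(\lambda^{\alpha})$; moreover boundedness in $H^1(\R^N)$ makes $\|\nabla v_\lambda\|_2^2$, $\|v_\lambda\|_q^q$, $\|v_\lambda\|_2^2$ and $\|v_\lambda\|_{2^*}^{2^*}$ all $O(1)$ as $\lambda\to0$.

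For \eqref{e311}, I would use that, as computed in the proof of Lemma~\ref{l24-1}, combining the Nehari and Poho\v zaev identities for $v_\lambda$ gives
\[
m_\lambda=\Big(\tfrac12-\tfrac1{2^*}\Big)a\,\|\nabla v_\lambda\|_2^2+\Big(\tfrac14-\tfrac1{2^*}\Big)b\,\lambda^{\alpha}\big(\|\nabla v_\lambda\|_2^2\big)^2,
\]
while the same manipulation applied to $v_0$ gives $m_0=(\tfrac12-\tfrac1{2^*})a\,\|\nabla v_0\|_2^2$. As $b\lambda^{\alpha}(\|\nabla v_\lambda\|_2^2)^2=O(\lambda^{\alpha})$ and $m_\lambda-m_0=O(\lambda^{\alpha})$, subtracting the two yields $a(\tfrac12-\tfrac1{2^*})\big(\|\nabla v_\lambda\|_2^2-\|\nabla v_0\|_2^2\big)=O(\lambda^{\alpha})$, i.e.\ \eqref{e311}.

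For \eqref{e29-2} and \eqref{e310}, I would combine \eqref{e33} (valid for $v_\lambda$) with its counterpart for $v_0$ (from the Nehari and Poho\v zaev identities of $(Q_0)$), which gives
\[
\|v_\lambda\|_2^2-\|v_0\|_2^2=\kappa\big(\|v_\lambda\|_q^q-\|v_0\|_q^q\big),\qquad \kappa:=\frac{\tfrac1q-\tfrac1{2^*}}{\tfrac12-\tfrac1{2^*}}\in(0,1),
\]
the inclusion $\kappa<1$ holding because $q>2$. On the other hand, subtracting the Nehari identity of $(Q_0)$ from that of $(Q_\lambda)$ and inserting \eqref{e311}, the bound $b\lambda^{\alpha}(\|\nabla v_\lambda\|_2^2)^2=O(\lambda^{\alpha})$ and the bound $\lambda^{\beta}\|v_\lambda\|_{2^*}^{2^*}=O(\lambda^{\alpha})$, I obtain $\big(\|v_\lambda\|_2^2-\|v_0\|_2^2\big)-\big(\|v_\lambda\|_q^q-\|v_0\|_q^q\big)=O(\lambda^{\alpha})$. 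Substituting the previous display then gives $(\kappa-1)\big(\|v_\lambda\|_q^q-\|v_0\|_q^q\big)=O(\lambda^{\alpha})$, hence \eqref{e29-2}, and finally \eqref{e310} follows since $\|v_\lambda\|_2^2-\|v_0\|_2^2=\kappa\big(\|v_\lambda\|_q^q-\|v_0\|_q^q\big)$.

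I do not anticipate a real analytic difficulty; the only points to watch are the bookkeeping of error orders — recognising that both the nonlocal term $b\lambda^{\alpha}(\|\nabla v_\lambda\|_2^2)^2$ and the critical term $\lambda^{\beta}\|v_\lambda\|_{2^*}^{2^*}$ are $O(\lambda^{\alpha})$ — and the non-degeneracy of the relation linking $\|v_\lambda\|_q^q-\|v_0\|_q^q$ and $\|v_\lambda\|_2^2-\|v_0\|_2^2$, which is where $q>2$ enters. What makes the whole scheme work is the cancellation available when $p=2^*$, which collapses the Poho\v zaev--Nehari system into the single relation \eqref{e33}; this is precisely the mechanism that is unavailable when $p<2^*$. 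As a byproduct, \eqref{e310}--\eqref{e311} together with Lemma~\ref{l24-1} show that any weak $H^1$-limit of a subsequence of $\{v_\lambda\}$ shares the $H^1$-norm of $v_0$, so once the limit is identified the convergence $v_\lambda\to v_0$ is strong — but that is an output of these estimates, not an input.
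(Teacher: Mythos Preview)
Your argument is correct and follows essentially the same approach as the paper: both proofs extract the three estimates purely from the Nehari and Poho\v zaev identities for $(Q_\lambda)$ and $(Q_0)$ together with the $H^1$-boundedness (Lemma~\ref{l24-1}) and the energy estimate $m_\lambda-m_0=O(\lambda^\alpha)$ (Lemma~\ref{l24-2}). The only difference is the order of deductions --- the paper expresses $m_\lambda$ in terms of $\|v_\lambda\|_q^q$ and obtains \eqref{e29-2} first, then deduces \eqref{e310} and \eqref{e311} from the algebraic relations, whereas you express $m_\lambda$ in terms of $\|\nabla v_\lambda\|_2^2$ (reusing the display from Lemma~\ref{l24-1}) and obtain \eqref{e311} first; this is a cosmetic reordering of the same linear algebra.
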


\begin{proof}  Let $u_\lambda$ be  a ground state solution of $(P_\lambda)$ and 
\begin{equation}\label{e312}
A_\lambda=\int_{\mathbb R^N}|\nabla v_\lambda|^2, \quad B_\lambda=\int_{\mathbb R^N}|v_\lambda|^2, \quad C_\lambda=\int_{\mathbb R^N}|v_\lambda|^q, \quad D_\lambda=\int_{\mathbb R^N}|v_\lambda|^{2^*}.
\end{equation}
Then the Nehari and Poho\v zaev identities imply that
$$
\left\{ \begin{array}{rcl}
aA_\lambda +B_\lambda+b\lambda^{\frac{2N-q(N-2)}{2(q-2)}}A_\lambda^2 &=& C_\lambda+\lambda^{\frac{2^*-q}{q-2}}D_\lambda,\\
\frac{a}{2^*}A_\lambda +\frac{1}{2}B_\lambda+\frac{b}{2^*}\lambda^{\frac{2N-q(N-2)}{2(q-2)}}A_\lambda^2 &=& \frac{1}{q}C_\lambda+\frac{1}{2^*}\lambda^{\frac{2^*-q}{q-2}}D_\lambda.
\end{array}
\right.
$$
From which, we conclude that
\begin{equation}\label{e313}
B_\lambda=\frac{N(2^*-q)}{2^*q}C_\lambda, \quad aA_\lambda=\frac{N(q-2)}{2q}C_\lambda +O(\lambda^{\frac{2N-q(N-2)}{2(q-2)}}).
\end{equation}
Therefore, we obtain 
$$
\begin{array}{lcl}
m_\lambda&=&\frac{1}{2}aA_\lambda+\frac{1}{2}B_\lambda-\frac{1}{q}C_\lambda+O(\lambda^{\frac{2N-q(N-2)}{2(q-2)}})\\
&=&\left(\frac{N(q-2)}{4q}+\frac{N(2^*-q)}{22^*q}-\frac{1}{q}\right)C_\lambda+O(\lambda^{\frac{2N-q(N-2)}{2(q-2)}})\\
&=&\frac{q-2}{2q}C_\lambda+O(\lambda^{\frac{2N-q(N-2)}{2(q-2)}}).
\end{array}
$$
In a similar way, we can show that 
$$
m_0=\frac{q-2}{2q}C_0.
$$
Thus, we obtain
$$
\frac{q-2}{2q}(C_\lambda-C_0)=m_\lambda-m_0+O(\lambda^{\frac{2N-q(N-2)}{2(q-2)}}),
$$
which together with Lemma 3.4 implies that
$$
C_\lambda-C_0=\int_{\mathbb R^N}|v_\lambda|^q-\int_{\mathbb R^N}|v_0|^q=O(\lambda^{\frac{2N-q(N-2)}{2(q-2)}}).
$$
Since 
\begin{equation}\label{e314}
B_0=\frac{N(2^*-q)}{2^*q}C_0, \quad aA_0=\frac{N(q-2)}{2q}C_0.
\end{equation}
it follows from \eqref{e313} and \eqref{e314}
that
$$
A_\lambda-A_0=\frac{N(q-2)}{2aq}(C_\lambda-C_0)+O(\lambda^{\frac{2N-q(N-2)}{2(q-2)}})    =O(\lambda^{\frac{2N-q(N-2)}{2(q-2)}}),
$$
$$
B_\lambda-B_0=\frac{N(2^*-q)}{2^*q}(C_\lambda-C_0)=O(\lambda^{\frac{2N-q(N-2)}{2(q-2)}}),
$$
from which \eqref{e310} and \eqref{e311} follows. The  proof is complete.
\end{proof}

\begin{proof}[Proof of Theorem 2.1 for small $\lambda$]   Observe that $v_\lambda\to v_0$ in $H^1(\mathbb R^N)$ with $v_0$ being the unique ground state solution of $(Q_0)$.  For small $\lambda>0$, Theorem 2.1 follows from Lemmas 3.1-3.5 and the details will be omitted.
\end{proof}

\section{Proof of Theorem 2.1 as $\lambda\to\infty$}\label{s4}

\subsection{Rescalings} 

For $\lambda>0$, define the rescaling
\begin{equation}\label{e41}
v(x)=\lambda^{-\frac{1}{q-2}} u\big(\lambda^{-\frac{2^*-2}{2(q-2)}}x\big). 
\end{equation}
Rescaling \eqref{e41} transforms $(P_\lambda)$ into the equivalent equaition 
$$
-\big(a+b\int_{\mathbb R^N}|\nabla v|^2\big)\Delta v+\lambda^{-\frac{2^*-q}{q-2}} v=\lambda^{-\frac{2^*-q}{q-2}}v^{q-1} +v^{2^*-1}
\quad\text{in} \  \ \R^N.
 \eqno(R_\lambda)
$$
The corresponding energy functional is given by 
\begin{equation}\label{e42}
J_\lambda(v)=\frac{a}{2}\int_{\mathbb R^N}|\nabla v|^2+\frac{\lambda^{-\sigma}}{2}\int_{\R^N}|v|^2+\frac{b}{4}\left(\int_{\mathbb R^N}|\nabla v|^2\right)^2-\frac{\lambda^{-\sigma}}{q}\int_{\mathbb R^N}|v|^q-\frac{1}{2^*}\int_{\mathbb R^N}|v|^{2^*},
\end{equation}
here and in what follows, we set
$$
\sigma:=\frac{2^*-q}{q-2}.
$$
The formal limit equation for $(R_\lambda)$ as $\lambda\to \infty$ is given by the  equation
$$
-\big(a+b\int_{\mathbb R^N}|\nabla v|^2\big)\Delta v=v^{2^*-1} \quad \text{in $\R^N$.}
\eqno(R_\infty)
$$
The corresponding functional is given by
\begin{equation}\label{e43}
J_\infty(v)=\frac{a}{2}\int_{\mathbb R^N}|\nabla v|^2+\frac{b}{4}\left(\int_{\mathbb R^N}|\nabla v|^2\right)^2-\frac{1}{2^*}\int_{\mathbb R^N}|v|^{2^*}.
\end{equation}
We denote their corresponding Nehari manifolds as follows:
$$
\mathcal N_\lambda:=\left\{ v\in H^1(\mathbb R^N)\setminus\{0\}  \ \left | \ \int_{\mathbb R^N}a|\nabla v|^2+\lambda^{-\sigma} |v|^2+b(\int_{\mathbb R^N}|\nabla v|^2)^2=\int_{\mathbb R^N}|v|^{2^*}+\lambda^{-\sigma} |v|^q \right. \right\}.
$$
$$
\mathcal{N}_\infty:=
\left\{v\in D^{1,2}(\mathbb R^N)\setminus\{0\} \ \left | \ \int_{\mathbb R^N}a|\nabla v|^2+b(\int_{\mathbb R^N}|\nabla v|^2)^2=\int_{\mathbb R^N}|v|^{2^*}\  \right. \right\}. 
$$
Then
$$
m_\lambda:=\inf_{v\in \mathcal {N}_\lambda}J_\lambda(v), \qquad  m_\infty:=\inf_{v\in \mathcal {N}_\infty}J_\infty(v) 
$$
are well-defined and positive.

Let $\varpi:=\varpi(v)=a+b\int_{\mathbb R^N}|\nabla v|^2$, then  $\varpi(v)$ is invariant respect to the rescaling
\begin{equation}\label{e44}
\tilde v(x)=\eta^{\frac{N-2}{2}}v(\eta x).
\end{equation}
That is, $\varpi(\tilde v)=\varpi(v)$.

Set
\begin{equation}\label{e45}
w(x)=v(\sqrt \varpi x),
\end{equation}
then the equation $(R_\infty)$ reduces to
\begin{equation}\label{e46}
-\Delta w=w^{2^*-1}.
\end{equation}
Moreover, $\varpi$ satisfies the equation
$$
\varpi=a+\varpi^{\frac{N-2}{2}}b\int_{\mathbb R^N}|\nabla w|^2.
$$
If $N=3$, then
$$
\sqrt \varpi=\frac{b\int_{\mathbb R^N}|\nabla w|^2+\sqrt{b^2(\int_{\mathbb R^N}|\nabla w|^2)^2+4a}}{2}=\frac{bS^{\frac{3}{2}}+\sqrt{b^2S^3+4a}}{2}.
$$
If $N=4$ and $bS^2<1$, then 
$$
\varpi=\frac{a}{1-b\int_{\mathbb R^N}|\nabla w|^2}=\frac{a}{1-bS^2}.
$$

It is easy to see  that  $m_\infty$ is attained on $\mathcal N_\infty$ by $v_1(x):=W_1(\frac{x}{\sqrt\varpi})$ and the family of its rescalings 
\begin{equation}\label{e47}
v_\rho(x):=\rho^{-\frac{N-2}{2}}v_1(x/\rho),  \quad \rho>0,
\end{equation}
where $W_1$ is the the Talenti function
$$
W_1(x):=[N(N-2)]^{\frac{N-2}{4}}\left(\frac{1}{1+|x|^2}\right)^{\frac{N-2}{2}}.
$$
Furthermore, a direct computation shows that
\begin{equation}\label{e48}
\frac{\int_{\mathbb R^N}|\nabla v_\rho|^2}{\left(\int_{\mathbb R^N}|v_\rho|^{2^*}|\right)^{\frac{2}{2^*}}}=\frac{\int_{\mathbb R^N}|\nabla W_1|^2}{\left(\int_{\mathbb R^N}|W_1|^{2^*}|\right)^{\frac{2}{2^*}}}=S.
\end{equation}

\begin{lemma}\label{l310}
Let $\lambda>0$, $u\in H^1(\mathbb R^N)$ and $v$ is the rescaling \eqref{e41} of $u$.  Then: 

\begin{enumerate}
\item[$(a)$] $ \|\nabla u\|_2^2=\|\nabla v\|_2^2,$    \   $\|u\|_{2^*}^{2^*}=\|v\|_{2^*}^{2^*}$,
\medskip

\item[$(b)$] $\lambda^{1+\sigma}\|u\|_2^2=\|v\|_2^2$,  \    $\lambda^\sigma\|u\|_q^q=\|v\|_q^q$, \medskip

\item[$(c)$] $I_\lambda(u)=J_\lambda(v)$.
\end{enumerate}
\end{lemma}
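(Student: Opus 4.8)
The plan is to treat Lemma~\ref{l310} as a direct change of variables, exactly parallel to the (unproven) Lemma~\ref{l22}. Write the rescaling \eqref{e41} as $v=\alpha\,u(\mu\,\cdot)$ with amplitude $\alpha:=\lambda^{-\frac{1}{q-2}}$ and spatial scale $\mu:=\lambda^{-\frac{2^*-2}{2(q-2)}}$, so that $\nabla v(x)=\alpha\mu\,(\nabla u)(\mu x)$. For each of the five quantities entering $I_\lambda$ and $J_\lambda$ I would substitute $y=\mu x$, $dx=\mu^{-N}\,dy$, which yields
$$\|\nabla v\|_2^2=\alpha^2\mu^{2-N}\|\nabla u\|_2^2,\qquad \|v\|_{2^*}^{2^*}=\alpha^{2^*}\mu^{-N}\|u\|_{2^*}^{2^*},\qquad \|v\|_2^2=\alpha^2\mu^{-N}\|u\|_2^2,\qquad \|v\|_q^q=\alpha^q\mu^{-N}\|u\|_q^q.$$

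The only real content is then the exponent bookkeeping. Using $2^*=\tfrac{2N}{N-2}$, hence $2^*-2=\tfrac{4}{N-2}$, one computes $\mu^{N-2}=\lambda^{-\frac{(2^*-2)(N-2)}{2(q-2)}}=\lambda^{-\frac{2}{q-2}}=\alpha^2$, so $\alpha^2\mu^{2-N}=1$, giving the first identity in $(a)$; likewise $\mu^{N}=\lambda^{-\frac{N(2^*-2)}{2(q-2)}}=\lambda^{-\frac{2N}{(N-2)(q-2)}}=\alpha^{2^*}$, so $\alpha^{2^*}\mu^{-N}=1$, giving the second. For $(b)$, recalling $\sigma=\tfrac{2^*-q}{q-2}$ so that $1+\sigma=\tfrac{2^*-2}{q-2}$, I would check $\alpha^2\mu^{-N}=\lambda^{-\frac{2}{q-2}+\frac{2N}{(N-2)(q-2)}}=\lambda^{\frac{4}{(N-2)(q-2)}}=\lambda^{1+\sigma}$ and $\alpha^q\mu^{-N}=\lambda^{-\frac{q}{q-2}+\frac{2N}{(N-2)(q-2)}}=\lambda^{\frac{2N-q(N-2)}{(N-2)(q-2)}}=\lambda^{\sigma}$.

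Finally, part $(c)$ requires no further computation: inserting the relations from $(a)$--$(b)$ into \eqref{e42} term by term, the $\tfrac a2\|\nabla\cdot\|_2^2$, $\tfrac b4\|\nabla\cdot\|_2^4$ and $\tfrac{1}{2^*}\|\cdot\|_{2^*}^{2^*}$ terms are untouched by $(a)$, while $\tfrac{\lambda^{-\sigma}}{2}\|v\|_2^2=\tfrac{\lambda}{2}\|u\|_2^2$ and $\tfrac{\lambda^{-\sigma}}{q}\|v\|_q^q=\tfrac1q\|u\|_q^q$ by $(b)$; comparison with \eqref{e11} gives $J_\lambda(v)=I_\lambda(u)$. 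There is no genuine obstacle here — the whole proof is the elementary substitution plus the exponent identities displayed above, which is precisely why the analogous Lemma~\ref{l22} was recorded without proof; I would simply list the arithmetic and then invoke the lemma freely in the $\lambda\to\infty$ analysis that follows.
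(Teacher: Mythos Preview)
Your proof is correct and matches the paper's approach exactly: the paper records Lemma~\ref{l310} without proof (just as it does for Lemma~\ref{l22}), treating it as a routine change of variables, and your exponent bookkeeping verifies precisely the identities needed. There is nothing to add.
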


In particular, if $v_\lambda$ is the rescaling \eqref{e41} of the ground state $u_\lambda$, then 
$$
J_\lambda(v_\lambda)=I_\lambda(u_\lambda)
$$ 
and hence  $v_\lambda$ is the ground state of $(R_\lambda)$. 
Moreover, $v_\lambda$ satisfies the Poho\v zaev's identity \cite{Berestycki-1}:
\begin{equation}\label{e49}
\frac{a}{2^*}\int_{\mathbb R^N}|\nabla v_\lambda|^2+\frac{\lambda^{-\sigma}}{2}\int_{\mathbb R^N}|v_\lambda|^2+\frac{b}{2^*}\left(\int_{\mathbb R^N}|\nabla v_\lambda|^2\right)^2=\frac{\lambda^{-\sigma}}{q}\int_{\mathbb R^N}|v_\lambda|^q+\frac{1}{2^*}\int_{\mathbb R^N}|v_\lambda|^{2^*}.
\end{equation}
Define the Poho\v zaev manifold 
$$
\mathcal P_\lambda:=\{v\in H^1(\mathbb R^N)\setminus\{0\} \   | \  P_\lambda(v)=0 \},
$$
where 
\begin{equation}\label{e410}
P_\lambda(v):=\frac{a}{2^*}\int_{\R^N}|\nabla v|^2+\frac{\lambda^{-\sigma}}{2}\int_{\mathbb R^N}|v|^2+\frac{b}{2^*}\left(\int_{\R^N}|\nabla v|^2\right)^2-\frac{\lambda^{-\sigma}}{q}\int_{\mathbb R^N}|v|^q-\frac{1}{2^*}\int_{\mathbb R^N}|v|^{2^*}.
\end{equation}
Clearly, $v_\lambda\in \mathcal P_\lambda$. Moreover,  we have the following minimax characterizations for the least energy level $m_\lambda$:
\begin{equation}\label{e411}
m_\lambda=\inf_{v\in H^1(\mathbb R^N)\setminus\{0\}}\sup_{t\ge 0}J_\lambda(tv)=\inf_{v\in H^1(\mathbb R^N)\setminus\{0\}}\sup_{t\ge 0}J_\lambda(v_t).
\end{equation}
In particular, we have $m_\lambda=J_\lambda(v_\lambda)=\sup_{t>0}J_\lambda(tv_\lambda)=\sup_{t>0}J_\lambda((v_\lambda)_t)$. 

\begin{lemma}\label{l24-3}
The rescaled family of ground-sates $\{v_\lambda\}$ is bounded in $H^1(\mathbb R^N)$ for large $\lambda>0$.
\end{lemma}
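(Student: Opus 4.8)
The plan is to reduce $H^1$–boundedness of $\{v_\lambda\}$ to a uniform upper bound on the ground state energy $m_\lambda$, and then to produce that bound from the minimax characterisation \eqref{e411} with a carefully chosen test function.

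\textbf{Step 1 (algebraic reduction).} Write $A_\lambda=\int_{\mathbb R^N}|\nabla v_\lambda|^2$, $B_\lambda=\int_{\mathbb R^N}|v_\lambda|^2$, $C_\lambda=\int_{\mathbb R^N}|v_\lambda|^q$, $D_\lambda=\int_{\mathbb R^N}|v_\lambda|^{2^*}$. The Nehari identity $v_\lambda\in\mathcal N_\lambda$ together with the Poho\v zaev identity \eqref{e49} is a linear system in $(B_\lambda,D_\lambda)$; eliminating $D_\lambda$ gives $B_\lambda=\frac{(N-2)(2^*-q)}{2q}C_\lambda$, and feeding this back into $J_\lambda(v_\lambda)$ makes all $\lambda^{-\sigma}$–terms cancel, leaving
$$
m_\lambda=\tfrac1N\,aA_\lambda+\big(\tfrac14-\tfrac1{2^*}\big)bA_\lambda^2 .
$$
Since $\tfrac14-\tfrac1{2^*}=\tfrac1{12}>0$ for $N=3$ and $=0$ for $N=4$, in both cases $m_\lambda\ge\tfrac1N aA_\lambda$, so a bound $m_\lambda\le C$ for large $\lambda$ forces $A_\lambda$ to be bounded, i.e. $\{v_\lambda\}$ bounded in $D^{1,2}(\mathbb R^N)$.

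\textbf{Step 2 (uniform bound on $m_\lambda$).} By \eqref{e411}, $m_\lambda\le\sup_{t\ge0}J_\lambda(v_t)$ for any fixed nonzero $v\in H^1(\mathbb R^N)$, with $v_t(x)=v(x/t)$. Expanding $J_\lambda(v_t)$ and using $\lambda^{-\sigma}\le1$ for $\lambda\ge1$ (which holds since $\sigma>0$) and dropping the negative $L^q$–term bounds it above by a $\lambda$–independent polynomial $g(t)$ in $t\ge0$ with $g(0)=0$, whose leading term is $t^N$ with coefficient $\tfrac12\|v\|_2^2-\tfrac1{2^*}\|v\|_{2^*}^{2^*}$ when $N=3$, and $\tfrac b4\|\nabla v\|_2^4+\tfrac12\|v\|_2^2-\tfrac1{2^*}\|v\|_{2^*}^{2^*}$ when $N=4$. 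It thus suffices to choose $v$ making this coefficient negative, for then $\sup_{t\ge0}g(t)<\infty$ and $m_\lambda$ is bounded for all $\lambda\ge1$. For $N=3$ one may take any $v\in C_c^\infty(\mathbb R^N)\setminus\{0\}$ and replace it by $\mu v$ with $\mu$ large, since $\mu^{2^*}\|v\|_{2^*}^{2^*}$ then dominates $\mu^2\|v\|_2^2$. For $N=4$, where the limiting nonlinearity is Sobolev–critical, $\|v\|_{2^*}^{2^*}\le S^{-2}\|\nabla v\|_2^4$, so negativity forces $b<S^{-2}$; conversely, assuming $bS^2<1$, take a truncated Talenti bubble $v_\eps$ with $\|\nabla v_\eps\|_2^4/\|v_\eps\|_{2^*}^{2^*}\to S^2$, fix $\eps$ small so that $b\|\nabla v_\eps\|_2^4<\|v_\eps\|_{2^*}^{2^*}$, and finally rescale its amplitude to absorb the lower–order $\|v_\eps\|_2^2$ contribution.

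\textbf{Step 3 (full $H^1$–bound).} Once $A_\lambda\le C$, it remains to bound $B_\lambda$, which by Step 1 amounts to bounding $C_\lambda$. Gagliardo–Nirenberg gives $C_\lambda=\|v_\lambda\|_q^q\lesssim A_\lambda^{N(q-2)/4}B_\lambda^{(1-\theta)q/2}$ with $\theta=N(\tfrac12-\tfrac1q)$; substituting $B_\lambda=\frac{(N-2)(2^*-q)}{2q}C_\lambda$ and collecting powers of $C_\lambda$ leaves on the left the exponent $1-(1-\theta)q/2$, equal to $\tfrac{q-2}4>0$ for $N=3$ (with $q\in(4,6)$) and $\tfrac{q-2}2>0$ for $N=4$ (with $q\in(2,4)$). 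Hence $C_\lambda\lesssim A_\lambda^{\gamma}$ for some $\gamma>0$, so $C_\lambda$, and with it $B_\lambda$, are bounded, and $\|v_\lambda\|_{H^1(\mathbb R^N)}^2=A_\lambda+B_\lambda\le C$ for all large $\lambda$.

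The main obstacle is Step 2 in dimension $N=4$: since the limit problem $(R_\infty)$ is Sobolev–critical, compactly supported bumps are useless and one must estimate $\sup_{t\ge0}J_\lambda(v_t)$ along truncated Talenti bubbles, a $\lambda$–uniform \emph{finite} supremum being available precisely when $b<S^{-2}$, i.e.\ exactly under the hypothesis $bS^2<1$ in Theorem~\ref{t1}; tracking the nonlocal term $\tfrac b4(\int_{\mathbb R^N}|\nabla v|^2)^2$ and the vanishing $L^2$–term through the truncation is the technical heart of the matter, whereas the case $N=3$, which needs no restriction on $b$, is comparatively soft.
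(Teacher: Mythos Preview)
Your proof is correct and follows the same overall skeleton as the paper's: derive $m_\lambda=\tfrac{a}{N}A_\lambda+(\tfrac14-\tfrac1{2^*})bA_\lambda^2\ge\tfrac{a}{N}A_\lambda$ from the Nehari and Poho\v zaev identities, bound $m_\lambda$ from above uniformly in $\lambda$, and then control $B_\lambda$ in terms of $A_\lambda$ via the relation $B_\lambda=\tfrac{(N-2)(2^*-q)}{2q}C_\lambda$ combined with an interpolation inequality.

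The genuine difference is in Step~2. The paper does not build a self-contained upper bound for $m_\lambda$; instead it simply invokes the forward reference to Lemma~\ref{l28}, whose sharp estimate $m_\lambda\le m_\infty-c\,(\lambda\ln\lambda)^{-\frac{4-q}{q-2}}$ (resp.\ $m_\lambda\le m_\infty-c\,\lambda^{-\frac{6-q}{2(q-4)}}$) in particular yields $m_\lambda\le m_\infty<\infty$. That lemma already requires the full truncated--Talenti computation, so the paper is effectively postponing the work. Your argument is more elementary and logically self-contained: you only need a \emph{crude} finite bound on $\sup_{t\ge0}J_\lambda(v_t)$ for one fixed test function, and you obtain it by making the leading $t^N$--coefficient negative. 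For $N=3$ this is trivial (amplitude scaling of any $C_c^\infty$ bump), while for $N=4$ you correctly identify that one must use a truncated Talenti bubble together with the hypothesis $bS^2<1$; this is the same ingredient that drives Lemma~\ref{l28}, but you use it at a much coarser level. The paper's route is shorter on the page but leans on a later, heavier result; yours avoids the forward reference at the cost of reproducing a small piece of the Talenti computation. For Step~3, the paper interpolates $\|v_\lambda\|_q^q$ between $L^2$ and $L^{2^*}$ and then applies Sobolev, whereas you use Gagliardo--Nirenberg directly; the two are equivalent and yield the same closing inequality.
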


\begin{proof} The Nehari identity 
$$
a\int_{\mathbb R^N}|\nabla v_\lambda|^2+\lambda^{-\sigma}\int_{\mathbb R^N}|v_\lambda|^2+b(\int_{\mathbb R^N}|\nabla v_\lambda|^2)^2=\lambda^{-\sigma}\int_{\mathbb R^N}|v_\lambda|^q+\int_{\mathbb R^N}|v_\lambda|^{2^*}
$$
and the Poho\v zaev identity
$$
\frac{a}{2^*}\int_{\mathbb R^N}|\nabla v_\lambda|^2+\frac{1}{2}\lambda^{-\sigma}\int_{\mathbb R^N}|v_\lambda|^2+\frac{b}{2^*}(\int_{\mathbb R^N}|\nabla v_\lambda|^2)^2=
\frac{1}{q}\lambda^{-\sigma}\int_{\mathbb R^N}|v_\lambda|^q+\frac{1}{2^*}\int_{\mathbb R^N}|v_\lambda|^{2^*}.
$$
imply that
\begin{equation}\label{e412}
\left(\frac{1}{2}-\frac{1}{2^*}\right)\int_{\mathbb R^N}|v_\lambda|^2=\left(\frac{1}{q}-\frac
{1}{2^*}\right)\int_{\mathbb R^N}|v_\lambda|^q.
\end{equation}
Hence
$$
\frac{1}{N}\int_{\mathbb R^N}|v_\lambda|^2=\frac{2^*-q}{2^*q}\int_{\mathbb R^N}|v_\lambda|^q\le \frac{2^*-q}{2^*q}(\int_{\mathbb R^N}|v_\lambda|^2)^{\frac{2^*-q}{2^*-2}}
(\frac{1}{S}\int_{\mathbb R^N}|\nabla v_\lambda|^2)^{\frac{2^*(q-2)}{2(2^*-2)}},
$$
which implies that
$$
\frac{1}{N}(\int_{\mathbb R^N}|v_\lambda|^2)^{\frac{q-2}{2^*-2}}\le \frac{2^*-q}{2^*q}(\frac{1}{S}\int_{\mathbb R^N}|\nabla v_\lambda|^2)^{\frac{2^*(q-2)}{2(2^*-2)}}.
$$
 So,  to prove the boundedness of $\{v_\lambda\}$ in $H^1(\mathbb R^N)$, it suffices to show that $\{v_\lambda\}$ is bounded in $D^{1,2}(\mathbb R^N)$. It is easy to see that
$$
\begin{array}{rcl}
m_\lambda:=J_\lambda (v_\lambda)&=&\frac{a}{2}\int_{\mathbb R^N}|\nabla v_\lambda|^2+\frac{1}{2}\lambda^{-\sigma}\int_{\mathbb R^N}|v_\lambda|^2+\frac{b}{4}(\int_{\mathbb R^N}|\nabla v\lambda|^2)^2\\
&\mbox{}& \quad 
-\frac{1}{q}\lambda^{-\sigma}\int_{\mathbb R^N}|v_\lambda|^q-\frac{1}{2^*}\int_{\mathbb R^N}|v_\lambda|^{2^*}\\
&=&(\frac{1}{2}-\frac{1}{2^*})a\int_{\mathbb R^N}|v_\lambda|^2+(\frac{1}{4}-\frac{1}{2^*})b(\int_{\mathbb R^N}|\nabla v_\lambda|^2)^2\\
&\ge & \frac{1}{N}a\int_{\mathbb R^N}|\nabla v_\lambda|^2.
\end{array}
$$
On the other hand, it follows from Lemma  \ref{l28} below  that $m_\lambda\le C<+\infty$ for large $\lambda>0$, and hence $\{v_\lambda\}$ is bounded in $D^{1,2}(\mathbb R^N)$ and $H^1(\mathbb R^N)$. 
\end{proof}

Next we obtain an estimation of the least energy.

\begin{lemma}\label{l28}
There exists a constant $C=C(q)>0$ such that  for all large  $\lambda>0$, 
\begin{equation}\label{e413}
m_\lambda\le\left\{\begin{array}{lclcl} m_\infty-C(\lambda\ln\lambda)^{-\frac{4-q}{q-2}}  &\text{if}&N=4,\smallskip\\
m_\infty-C\lambda^{-\frac{6-q}{2(q-4)}} &\text{if}&N=3 \  \text{and} \  q>4.\end{array}\right.
\end{equation}
\end{lemma}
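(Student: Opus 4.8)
The plan is to bound $m_\lambda$ from above via the minimax characterisation \eqref{e411}, $m_\lambda\le\sup_{t\ge0}J_\lambda(v_t)$, evaluated on a carefully chosen two–parameter family of truncated Aubin–Talenti bubbles, and then to optimise the parameters as functions of $\lambda$. Concretely, I would fix a radial cut–off $\varphi\in C_c^\infty(\R^N)$ with $\varphi\equiv1$ on $B_1$ and $\operatorname{supp}\varphi\subset B_2$, and for parameters $0<\delta<R$ (to be tied to $\lambda$ later) take $u_{\delta,R}(x):=\varphi(x/R)\,\delta^{-(N-2)/2}W_1(x/\delta)$, where $W_1$ is the Talenti function of Section~\ref{s4} — i.e. a bubble concentrated at scale $\delta$ and truncated at scale $R$. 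Writing $\eps:=\delta/R$, the standard Brezis–Nirenberg estimates give $\|\nabla u_{\delta,R}\|_2^2=S^{N/2}+O(\eps^{N-2})$, $\|u_{\delta,R}\|_{2^*}^{2^*}=S^{N/2}+O(\eps^{N})$, $\|u_{\delta,R}\|_q^q\sim\delta^{\,N-(N-2)q/2}$ (the $L^q$–norm being comparable to $\|W_1\|_q^q\,\delta^{N-(N-2)q/2}$ since $q>N/(N-2)$ in both relevant ranges), and $\|u_{\delta,R}\|_2^2\lesssim\delta^2\ln(1/\eps)$ for $N=4$, resp.\ $\lesssim\delta R$ for $N=3$.

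Next I would analyse the fibre $t\mapsto J_\lambda\big((u_{\delta,R})_t\big)$ (with $v_t=v(\cdot/t)$): with $A=\|\nabla u_{\delta,R}\|_2^2$, $B=\|u_{\delta,R}\|_{2^*}^{2^*}$, $D=\|u_{\delta,R}\|_2^2$, $E=\|u_{\delta,R}\|_q^q$, it equals $\tfrac a2At^{N-2}+\tfrac b4A^2t^{2(N-2)}-\tfrac1{2^*}Bt^{N}+\lambda^{-\sigma}(\tfrac12D-\tfrac1qE)t^{N}$. For $N=4$ this reduces to $Pt^2-Qt^4$ with $P=\tfrac a2A>0$ and $Q=\tfrac14B-\tfrac b4A^2+\lambda^{-\sigma}(\tfrac1qE-\tfrac12D)>0$ (using $bS^2<1$), so $\sup_{t>0}=P^2/(4Q)$; for $N=3$ it is a cubic $P_1t+P_2t^2-P_3t^3$ with $P_1,P_2>0$ and $P_3=\tfrac16B-\lambda^{-\sigma}(\tfrac12D-\tfrac1qE)>0$ for large $\lambda$, whose maximum over $t>0$ is again given by an explicit formula in $(P_1,P_2,P_3)$. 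In both cases this maximum is a smooth function of $(A,B,\lambda^{-\sigma}D,\lambda^{-\sigma}E)$ near $(S^{N/2},S^{N/2},0,0)$, and at the base point its value equals $m_\infty$ — this last identity is precisely the algebraic computation of Section~\ref{s4} that determines $\varpi$. Since the partial derivative of this maximum with respect to $D$ is positive and that with respect to $E$ is negative at the base point, a first–order expansion yields, for all large $\lambda$, $m_\lambda\le\sup_{t\ge0}J_\lambda\big((u_{\delta,R})_t\big)\le m_\infty+C\eps^{N-2}+C\lambda^{-\sigma}D-c\,\lambda^{-\sigma}E+O\big(\eps^{2(N-2)}+\lambda^{-2\sigma}(D+E)^2\big)$, the last error being $o$ of the retained terms along the parameters chosen below.

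It then remains a two–parameter minimisation: inserting the norm estimates, $m_\lambda\le m_\infty+C\eps^{N-2}+C\lambda^{-\sigma}\delta^2\psi_N(1/\eps)-c\,\lambda^{-\sigma}\delta^{\,N-(N-2)q/2}$, with $\psi_4(s)=\ln s$, $\psi_3(s)=s$. For fixed $\delta$, minimising the error terms over $R$ gives the optimal cut–off $R\sim\lambda^{\sigma/2}$, after which those terms are $\lesssim\delta\lambda^{-\sigma/2}$ when $N=3$ and $\lesssim\lambda^{-\sigma}\delta^2\ln\lambda$ when $N=4$; then minimising over $\delta\in(0,1)$ the expressions $\delta\lambda^{-\sigma/2}-c\,\lambda^{-\sigma}\delta^{3-q/2}$ ($N=3$, $q>4$) and $\lambda^{-\sigma}\delta^2\ln\lambda-c\,\lambda^{-\sigma}\delta^{4-q}$ ($N=4$) — which have strictly negative minima because $3-\tfrac q2<1$ for $q>4$ and $4-q<2$ always — gives minimiser $\delta\sim\lambda^{-\sigma/(q-4)}$ with minimum $\sim-\lambda^{-\sigma(q-2)/(2(q-4))}$ for $N=3$, resp.\ $\delta\sim(\ln\lambda)^{-1/(q-2)}$ with minimum $\sim-\lambda^{-\sigma}(\ln\lambda)^{-(4-q)/(q-2)}$ for $N=4$. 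Since $\sigma=\tfrac{2^*-q}{q-2}$, one has $\tfrac{\sigma(q-2)}{2(q-4)}=\tfrac{6-q}{2(q-4)}$ for $N=3$ and $\lambda^{-\sigma}(\ln\lambda)^{-(4-q)/(q-2)}=(\lambda\ln\lambda)^{-\frac{4-q}{q-2}}$ for $N=4$, which is exactly \eqref{e413}.

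The hard part will be the fibre step: because of the Kirchhoff term the fibre is a genuine cubic (for $N=3$) rather than of the usual Nehari form, so its maximum is not the textbook $\tfrac1N(A/B^{2/2^*})^{N/2}$; one must verify smoothness of this maximum in the coefficients, identify its base value with $m_\infty$ through the $\varpi$–computation of Section~\ref{s4}, pin down the signs of its partial derivatives, and then execute the two–parameter Brezis–Nirenberg estimates with care — in particular the logarithmic factor for $N=4$ and the linear growth $\delta R$ of $\|u_{\delta,R}\|_2^2$ for $N=3$, which are precisely what produce the $\ln\lambda$ and the dimension–dependent exponent in \eqref{e413}.
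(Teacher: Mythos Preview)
Your proposal is correct and follows essentially the same strategy as the paper: test with a two–parameter family of truncated Aubin--Talenti bubbles in the dilation fibre \eqref{e411}, then balance the cut–off error $O(\eps^{N-2})$ against the competing $L^q$–gain and $L^2$–cost coming with the factor $\lambda^{-\sigma}$, and optimise the two scales. The only difference is in execution: instead of Taylor–expanding the fibre maximum as a smooth function of the norms, the paper splits $J_\lambda$ explicitly into the $J_\infty$ part (whose fibre sup it computes directly as $m_\infty+O(\ell^{-(N-2)})$, using the $\varpi$–algebra you allude to) and the correction $\lambda^{-\sigma}t_\lambda^N\bigl(\tfrac1q\|v\|_q^q-\tfrac12\|v\|_2^2\bigr)$, optimises the bubble scale $\rho$ in the latter in closed form, and then picks the ratio $\ell=R/\rho$ directly ($\ell=\lambda^M$ for $N=4$, $\ell\sim\lambda^{(6-q)/(2(q-4))}$ for $N=3$) rather than deriving it by minimisation---which sidesteps the smoothness/envelope argument but lands on exactly the same bounds.
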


\begin{proof}
Let  $\rho>0$, $R\gg 1$ be a large parameter and $\eta_R\in C_0^\infty(\mathbb R)$ is a cut-off function such that $\eta_R(r)=1$ for $|r|<R$, $0<\eta_R(r)<1$ for $R<|r|<2R$, $\eta_R(r)=0$ for $|r|>2R$ and $|\eta'_R(r)\le 2/R$.  

For $\ell\gg 1$, a straightforward computation shows that 
\begin{equation}\label{e414}
\int_{\mathbb R^N}|\nabla (\eta_\ell W_1)|^2=S^{\frac{N}{2}}+O(\ell^{-(N-2)})=
\left\{ 
\begin{array}{ccl} S^{\frac{N}{2}}+O(\ell^{-2}) &\text{if}& N=4,\\
S^{\frac{N}{2}}+O(\ell^{-1})   &\text{if}& N=3.
\end{array}
\right.
\end{equation}
\begin{equation}\label{e415}
\int_{\mathbb R^N} |\eta_\ell W_1|^{2^*}=S^{\frac{N}{2}}+O(\ell^{-N}),
\end{equation}
\begin{equation}\label{e416}
\int_{\mathbb R^N}|\eta_\ell W_1|^2=\left\{ 
\begin{array}{ccl} \ln \ell(1+o(1)) &\text{if}& N=4,\\
\ell(1+o(1)) &\text{if}& N=3.
\end{array}\right.
\end{equation}
By Lemma \ref{l24-2}, we find
\begin{align}\label{e417}
m_\lambda&\le \sup_{t\ge 0}J_\lambda((\eta_R\tilde W_\rho)_t)=J_\lambda((\eta_R\tilde W_\rho)_{t_\lambda})\\
&\le \sup_{t\ge 0}\left(\frac{at^{N-2}}{2}\int_{\mathbb R^N}|\nabla(\eta_R\tilde W_\rho)|^2+\frac{bt^{2(N-2)}}{4}(\int_{\R^N}|\nabla(\eta_R\tilde W_\rho)|^2)^2-\frac{t^{N}}{2^*}\int_{\mathbb R^N}|\eta_R\tilde W_\rho|^{2^*}\right)\nonumber\\
&\quad  -\lambda^{-\sigma}
t_\lambda^N\left[\int_{\mathbb R^N}\frac{1}{q}|\eta_R\tilde W_\rho|^q-\frac{1}{2}|\eta_R\tilde W_\rho|^2\right]\nonumber\\
&=  (I)-\lambda^{-\sigma} (II),\nonumber
\end{align}
where $t_\lambda>0$ is the unique solution of the following equation
$$
\begin{array}{rl}
&\frac{a}{2^*t_\lambda^2}\int_{\R^N}|\nabla(\eta_R\tilde W_\rho)|^2+\frac{b}{2^*t_\lambda^{4-N}}(\int_{\R^N}|\nabla(\eta_R\tilde W_\rho)|^2)^2\\
&=\frac{1}{2^*}\int_{\R^N}|\eta_R\tilde W_\rho|^{2^*}
+\lambda^{-\frac{2^*-q}{q-2}}\left[\frac{1}{q}\int_{\R^N}|\eta_R\tilde W_\rho|^q-\frac{1}{2}\int_{\R^N}|\eta_R\tilde W_\rho|^2\right].
\end{array}
$$
Since 
$$
\int_{\mathbb R^N}|\eta_R\tilde W_\rho|^q\le \left(\int_{\mathbb R^N}|\eta_R\tilde W_\rho|^2\right)^{\frac{2^*-q}{2^*-2}}\left(\int_{\mathbb R^N}|\eta_R\tilde W_\rho|^{2^*}\right)^{\frac{q-2}{2^*-2}},
$$
it follows that 
$$
\frac{\frac{1}{q}\int_{\mathbb R^N}|\eta_R\tilde W_\rho|^q-\frac{1}{2}\int_{\mathbb R^N}|\eta_R\tilde W_\rho|^2}{\int_{\mathbb R^N}|\eta_R\tilde W_\rho|^{2^*}}\le \sup_{x\ge 0}\left[\frac{1}{q}x^{\frac{2^*-q}{2^*-2}}-\frac{1}{2}x\right]
=\frac{q-2}{2(2^*-q)}\left(\frac{2(2^*-q)}{q(2^*-2)}\right)^{\frac{2^*-2}{q-2}}.
$$

Set $\ell=R/\rho$ and $\tilde W_1(x)=W_1(\gamma_Nx)$,  then
$$
\begin{array}{lcl}
\int_{\mathbb R^N}|\nabla(\eta_\ell\tilde W_1)|^2&=&\gamma^{-(N-2)}\int_{\mathbb R^N}|\nabla(\eta_{\ell\gamma}W_1)|^2\\
&=&\gamma^{-(N-2)}S^{\frac{N}{2}}+O(\ell^{-(N-2)})\\
&=&\int_{\mathbb R^N}|\nabla \tilde W_1|^2+O(\ell^{-(N-2)}).
\end{array}
$$
We also deduce
$$
\int_{\mathbb R^N}|\eta_\ell\tilde W_1|^{2^*}=\gamma^{-N}\int_{\mathbb R^N}|\eta_{\ell\gamma}W_1|^{2^*}=\gamma^{-N}S^{\frac{N}{2}}+O(\ell^{-N})=\int_{\mathbb R^N}|\tilde W_1|^{2^*}+O(\ell^{-N}),
$$
$$
\int_{\mathbb R^N}|\eta_\ell \tilde W_1|^q=\gamma^{-N}\int_{\mathbb R^N}|\eta_{\ell\gamma}W_1|^q=\gamma^{-N}\int_{\mathbb R^N}|W_1|^q+o(1)=\int_{\mathbb R^N}|\tilde W_1|^q+o(1),
$$
$$
\int_{\mathbb R^N}|\eta_\ell\tilde W_1|^2=\gamma^{-N}\int_{\mathbb R^N}\eta_{\ell\gamma}W_1|^2=\left\{ 
\begin{array}{ccl} \ln (\ell\gamma)(\gamma^{-4}+o(1)) &\text{if}& N=4,\\
\ell(\gamma^{-2}+o(1)) &\text{if}& N=3.
\end{array}\right.
$$ 
Therefore
$$
\begin{array}{lcl}
(I)&=&\sup_{t>0}\left(\frac{at^{N-2}}{2}\int_{\mathbb R^N}|\nabla(\eta_\ell\tilde W_1)|^2+\frac{bt^{2(N-2)}}{4}(\int_{\mathbb R^N}|\nabla(\eta_\ell \tilde W_1)|^2)^2-\frac{t^{N}}{2^*}\int_{\mathbb R^N}|\eta_\ell\tilde W_1|^{2^*}\right)\\
&=&a(\frac{1}{2}-\frac{1}{2^*})t_\ell^{N-2}\int_{\mathbb R^N}|\nabla(\eta_\ell\tilde W_1)|^2+b(\frac{1}{4}-\frac{1}{2^*})t_\ell^{2(N-2)}(\int_{\mathbb R^N}|\nabla(\eta_\ell \tilde W_1)|^2)^2,
\end{array}
$$
where $t_\ell>0$ is given by 
$$
at_\ell^{N-2}\int_{\mathbb R^N}|\nabla (\eta_\ell \tilde W_1)|^2+bt_\ell^{2(N-2)}(\int_{\mathbb R^N}|\nabla(\eta_\ell \tilde W_1)|^2)^2=t_\ell^N\int_{\mathbb R^N}|\eta_\ell\tilde W_1|^{2^*}.
$$

If $N=4$, then $t=t_\ell$ satisfies
$$
at^2\int_{\mathbb R^N}|\nabla(\eta_\ell\tilde W_1)|^2+bt^4(\int_{\mathbb R^N}|\nabla(\eta_\ell\tilde W_1)|^2)^2=t^4\int_{\mathbb R^N}|\eta_\ell\tilde W_1|^4.
$$
Hence, we have 
$$
\begin{array}{lcl}
t_\ell^2&=&\frac{a\int_{\mathbb R^N}|\nabla(\eta_\ell\tilde W_1)|^2}{\int_{\mathbb R^N}|\eta_\ell\tilde W_1|^4-b(\int_{\mathbb R^N}|\nabla(\eta_\ell\tilde W_1)|^2)^2}\\
&=&\frac{a\int_{\mathbb R^N}|\nabla\tilde W_1|^2}{\int_{\mathbb R^N}|\tilde W_1|^4-b(\int_{\mathbb R^N}|\nabla\tilde W_1|^2)^2}+O(\ell^{-2})\\
&=&t_\infty^2+O(\ell^{-2}).
\end{array}
$$
$$
\begin{array}{lcl}
\int_{\mathbb R^N}|\eta_\ell \tilde W_1|^4-b(\int_{\mathbb R^N}|\nabla(\eta_\ell\tilde W_1)|^2)^2&=&\gamma^{-4}S^2+O(\ell^{-4})-b(\gamma^{-2}S^2+O(\ell^{-2}))^2\\
&=&\gamma^{-4}S^2(1-bS^2)+O(\ell^{-2}).
\end{array}
$$
Therefore, we obtain
$$
\begin{array}{lcl}
(I)&=&\frac{a}{N}t_\ell^2\int_{\mathbb R^N}|\nabla(\eta_\ell\tilde W_1)|^2
=\frac{a}{4}\frac{a(\int_{\mathbb R^N}|\nabla(\eta_\ell\tilde W)|^2)^2}{\int_{\mathbb R^N}|\eta_\ell\tilde W_1|^4-b(\int_{\mathbb R^N}|\nabla(\eta_\ell \tilde W_1)|^2)^2}\\
&=&\frac{a^2}{4}\frac{(\gamma^{-2}S^{2}+O(\ell^{-2}))^2}{\gamma^{-4}S^2(1-bS^2)+O(\ell^{-2})}\\
&=&\frac{a^2S^{2}}{4(1-bS^2)}+O(\ell^{-2}).\\
&=&=\frac{a}{4}\frac{a(\int_{\mathbb R^N}|\nabla\tilde W|^2)^2}{\int_{\mathbb R^N}|\tilde W_1|^4-b(\int_{\mathbb R^N}|\nabla\tilde W_1|^2)^2}+O(\ell^{-2})\\
&=&m_\infty+O(\ell^{-2}).
\end{array}
$$

If $N=3$, then $t=t_\ell$ satisfies 
$$
t^2\int_{\mathbb R^N}|\eta_\ell \tilde W_1|^6-bt(\int_{\mathbb R^N}|\nabla(\eta_\ell \tilde W_1)|^2)^2-a\int_{\mathbb R^N}|\nabla(\eta_\ell\tilde W_1)|^2=0.
$$
Therefore,  we get 
$$
\begin{array}{lcl}
t_\ell&=&\frac{b(\int_{\mathbb R^N}|\nabla(\eta_\ell \tilde W_1)|^2)^2+\sqrt{b^2(\int_{\mathbb R^N}|\nabla(\eta_\ell \tilde W_1)|^2)^4+4a\int_{\mathbb R^N}|\nabla(\eta_\ell \tilde W_1)|^2\int_{\mathbb R^N}|\eta_\ell\tilde W_1|^6}}{2\int_{\mathbb R^N}|\eta_\ell\tilde W_1|^6}\\
&=&\frac{b(\int_{\mathbb R^N}|\nabla\tilde W_1|^2)^2+\sqrt{b^2(\int_{\mathbb R^N}|\nabla\tilde W_1|^2)^4+4a\int_{\mathbb R^N}|\nabla \tilde W_1|^2\int_{\mathbb R^N}|\tilde W_1|^6}}{2\int_{\mathbb R^N}|\tilde W_1|^6}+O(\ell^{-1})\\
&=&t_\infty+O(\ell^{-1}).
\end{array}
$$
Thus, we obtain 
$$
\begin{array}{lcl}
(I)&=&\sup_{t\ge 0}\frac{a}{2}t\int_{\R^N}|\nabla(\eta_\ell\tilde W_1)|^2+\frac{b}{4}t^2(\int_{\R^N}|\nabla(\eta_\ell\tilde W_1)|^2)^2-\frac{1}{2^*}t^3\int_{\R^N}|\eta_\ell\tilde W_1|^{2^*}\\
&=&\frac{a}{2}t_\ell\int_{\R^N}|\nabla(\eta_\ell\tilde W_1)|^2+\frac{b}{4}t_\ell^2(\int_{\R^N}|\nabla(\eta_\ell\tilde W_1)|^2)^2-\frac{1}{2^*}t_\ell^3\int_{\R^N}|\eta_\ell\tilde W_1|^{2^*}\\
&=&\frac{a}{2}t_\infty\int_{\R^N}|\nabla\tilde W_1|^2+\frac{b}{4}t_\infty^2(\int_{\R^N}|\nabla\tilde W_1|^2)^2-\frac{1}{2^*}t_\infty^3\int_{\R^N}|\tilde W_1|^{2^*}+O(\ell^{-1})\\
&=&\sup_{t\ge 0}J_0((\tilde W_1)_t)+O(\ell^{-1})\\
&=&m_\infty+O(\ell^{-1}).
\end{array}
$$
On the other hand, we have 
$$
\begin{array}{lcl}
(II)&=&t_\lambda^N\left[\frac{1}{q}\int_{\R^N}|\eta_R\tilde W_\rho|^q-\frac{1}{2}\int_{\R^N}|\eta_R\tilde W_\rho|^2\right]\\
&=&t_\lambda^N\left[\frac{1}{q}\rho^{\frac{2N-q(N-2)}{2}}\int_{\R^N}|\eta_\ell \tilde W_1|^q-\frac{1}{2}\rho^2\int_{\R^N}|\eta_\ell \tilde W_1|^2\right].
\end{array}
$$
Let $h(\rho):=\frac{1}{q}\rho^{\frac{2N-q(N-2)}{2}}\int_{\R^N}|\eta_\ell \tilde W_1|^q-\frac{1}{2}\rho^2\int_{\R^N}|\eta_\ell \tilde W_1|^2$. Then $h(\rho)$ take its maximum value $\varphi(\rho_\ell)$ at the unique point $\rho_\ell>0$, and 
$$
\begin{array}{lcl}
\sup_{\rho>0}h(\rho)&=&h(\rho_\ell)\\
&=&\frac{(q-2)(N-2)}{4q}\left[\frac{2N-q(N-2)}{2q}\right]^{\frac{2N-q(N-2)}{(q-2)(N-2)}}
\left(\frac{\|\eta_\ell \tilde W_1\|_q^{q(2^*-2)}}{\|\eta_\ell \tilde W_1\|_2^{2(2^*-q)}}\right)^{\frac{1}{q-2}}.
\end{array}
$$
Then we obtain
$$
(II)=t_\lambda^Nh(\rho_\ell)\ge 2C_q\|\eta_\ell \tilde W_1\|_2^{-\frac{2(2^*-q)}{q-2}}\ge \left\{ 
\begin{array}{ccl} C_q(\ln \ell)^{-\frac{4-q}{q-2}} &\text{if}& N=4,\\
C_q\ell^{-\frac{6-q}{q-2}} &\text{if}& N=3.
\end{array}\right.
$$ 

For the rest of the proof we consider separately the cases $N=4$ and $N=3$.

\smallskip
\paragraph{\sc Case $N=4$.}  In this case, we have 
$$
m_\lambda\le m_\infty+O(\ell^{-2})-C_q\lambda^{-\frac{4-q}{q-2}}(\ln\ell)^{-\frac{4-q}{q-2}}.
$$
Take $\ell=\lambda^M$. Then 
$$
m_\lambda\le m_\infty+O(\lambda^{-2M})-C_qM^{-\frac{4-q}{q-2}}(\lambda\ln\lambda)^{-\frac{4-q}{q-2}}.
$$
If $2M>\frac{4-q}{q-2}$, then for large $\lambda>0$, we have
\begin{equation}\label{e418}
m_\lambda\le m_\infty-\frac{1}{2}C_qM^{-\frac{4-q}{q-2}}(\lambda\ln\lambda)^{-\frac{4-q}{q-2}}.
\end{equation}
Thus, if $N=4$, the result of Lemma \ref{l28} is proved by choosing 
$$
\varpi=\frac{1}{2}C_qM^{-\frac{4-q}{q-2}}.
$$

\paragraph{\sc Case $N=3$.} In this case, we have 
$$
m_\lambda\le m_\infty+O(\ell^{-1})-C_q\lambda^{-\frac{6-q}{q-2}}\ell^{-\frac{6-q}{q-2}}.
$$
Take $\ell=\delta^{-1}\lambda^\tau$. Then 
$$
m_\lambda\le m_\infty+\delta O(\lambda^{-\tau})-C_q\delta^{\frac{6-q}{q-2}}\lambda^{-(1+\tau)\frac{6-q}{q-2}}.
$$
Let $\tau>0$ be such that $\tau=(1+\tau)\frac{6-q}{q-2}$, that is, $\tau=\frac{6-q}{2(q-4)}$. 

Since $q>4$, we have $\frac{6-q}{q-2}<1$, we can choose a small $\delta>0$ such that 
$$
m_\lambda\le m_\infty-\frac{1}{2}C_q\delta^{\frac{6-q}{q-2}}\lambda^{-\frac{6-q}{2(q-4)}}.
$$
and take
$$
\varpi=\frac{1}{2}C_q\delta^{\frac{6-q}{q-2}},
$$
which finished the proof in the case $N=3$.
\end{proof}

\begin{corollary}\label{c29}
Let $\delta_\lambda:=m_\infty-m_\lambda$, then 
$$
\lambda^{-\frac{2^*-q}{q-2}}\gtrsim \delta_\lambda\gtrsim 
\left\{\begin{array}{lcl} 
 (\lambda\ln\lambda)^{-\frac{4-q}{q-2}}&\text{if}&N=4,\\
 \lambda^{-\frac{6-q}{2(q-4)}} &\text{if}& N=3 \ \text{and} \ q\in (4,6).
 \end{array}\right.
 $$
\end{corollary}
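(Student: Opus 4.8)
\emph{Plan of proof.} The stated lower bounds on $\delta_\lambda$ are precisely the content of Lemma~\ref{l28}, so the only thing to prove is the upper bound $\delta_\lambda\lesssim\lambda^{-\sigma}$, i.e.\ $m_\lambda\ge m_\infty-C\lambda^{-\sigma}$ for large $\lambda$. The plan is to test the limit functional $J_\infty$ along the dilation fibre $t\mapsto(v_\lambda)_t$ passing through the rescaled ground state $v_\lambda$ of $(R_\lambda)$ itself. Writing $A_\lambda:=\|\nabla v_\lambda\|_2^2$ and $D_\lambda:=\|v_\lambda\|_{2^*}^{2^*}$, the fibre map
$$\varphi_\lambda(t):=J_\infty\big((v_\lambda)_t\big)=\frac a2 A_\lambda t^{N-2}+\frac b4 A_\lambda^2 t^{2(N-2)}-\frac1{2^*}D_\lambda t^{N}$$
has, for $N=3,4$, the convenient feature that its critical point equation $\varphi_\lambda'(t)=0$ coincides with the defining relation of $\mathcal N_\infty$ for $(v_\lambda)_t$; hence, provided $\varphi_\lambda$ is bounded above and attains its maximum at some $\bar t_\lambda\in(0,\infty)$, we get $(v_\lambda)_{\bar t_\lambda}\in\mathcal N_\infty$ and therefore $\sup_{t>0}J_\infty((v_\lambda)_t)=\varphi_\lambda(\bar t_\lambda)\ge m_\infty$. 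Combining this with $m_\lambda=\sup_{t>0}J_\lambda((v_\lambda)_t)\ge J_\lambda((v_\lambda)_{\bar t_\lambda})$ and the elementary identity $J_\infty(w)-J_\lambda(w)=\lambda^{-\sigma}\big(\frac1q\|w\|_q^q-\frac12\|w\|_2^2\big)$ gives
$$\delta_\lambda=m_\infty-m_\lambda\le J_\infty\big((v_\lambda)_{\bar t_\lambda}\big)-J_\lambda\big((v_\lambda)_{\bar t_\lambda}\big)\le\frac{\lambda^{-\sigma}}{q}\,\bar t_\lambda^{N}\,\|v_\lambda\|_q^q,$$
so everything reduces to the bounds $\bar t_\lambda\lesssim1$ and $\|v_\lambda\|_q^q\lesssim1$ as $\lambda\to\infty$.

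The bounds $\|v_\lambda\|_q^q\lesssim1$, $\|v_\lambda\|_2^2\lesssim1$, $A_\lambda\lesssim1$ and $D_\lambda\lesssim1$ are contained in (the proof of) Lemma~\ref{l24-3}. The decisive extra input is that the rescaled ground states \emph{do not vanish}: $A_\lambda\gtrsim1$ and $D_\lambda\gtrsim1$ for large $\lambda$. To obtain $A_\lambda\gtrsim1$, combine the Nehari identity
$$aA_\lambda+\lambda^{-\sigma}\|v_\lambda\|_2^2+bA_\lambda^2=\lambda^{-\sigma}\|v_\lambda\|_q^q+D_\lambda$$
with the Poho\v zaev consequence \eqref{e412}, i.e.\ $\|v_\lambda\|_2^2=\frac{N(2^*-q)}{2^*q}\|v_\lambda\|_q^q$, the Sobolev inequality $D_\lambda\le S^{-2^*/2}A_\lambda^{2^*/2}$ and the Gagliardo--Nirenberg inequality; eliminating $\|v_\lambda\|_q^q$ and $\|v_\lambda\|_2^2$ in favour of $A_\lambda$ yields $\|v_\lambda\|_2^2\lesssim A_\lambda^{N/(N-2)}$, and feeding this back into the Nehari identity, together with the identity $2^*/2=\frac N{N-2}$, produces $aA_\lambda\le\big(S^{-2^*/2}+o(1)\big)A_\lambda^{N/(N-2)}$ as $\lambda\to\infty$, whence $A_\lambda^{2/(N-2)}\gtrsim1$. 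Then $D_\lambda=aA_\lambda+bA_\lambda^2-\lambda^{-\sigma}\big(\|v_\lambda\|_q^q-\|v_\lambda\|_2^2\big)\gtrsim1$, and when $N=4$ also $D_\lambda-bA_\lambda^2=aA_\lambda+\lambda^{-\sigma}\big(\|v_\lambda\|_2^2-\|v_\lambda\|_q^q\big)\gtrsim1$.

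Given these two-sided bounds, the finiteness of $\sup_{t>0}\varphi_\lambda$ and the estimate $\bar t_\lambda\sim1$ follow by solving $\varphi_\lambda'(\bar t_\lambda)=0$ explicitly. For $N=3$ this is the quadratic $D_\lambda\bar t_\lambda^2-bA_\lambda^2\bar t_\lambda-aA_\lambda=0$, whose unique positive root is the maximiser and satisfies $\bar t_\lambda=\big(bA_\lambda^2+\sqrt{b^2A_\lambda^4+4aA_\lambda D_\lambda}\big)/(2D_\lambda)\sim1$. For $N=4$ one has $\varphi_\lambda(t)=\frac a2 A_\lambda t^2-\frac14(D_\lambda-bA_\lambda^2)t^4$ with $D_\lambda-bA_\lambda^2>0$ for large $\lambda$ (here, via $D_\lambda\le S^{-2}A_\lambda^2$, the standing assumption $bS^2<1$ is used), so $\varphi_\lambda(t)\to-\infty$ as $t\to\infty$ and the maximiser is $\bar t_\lambda^2=aA_\lambda/(D_\lambda-bA_\lambda^2)\sim1$. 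Substituting $\bar t_\lambda\sim1$ and $\|v_\lambda\|_q^q\lesssim1$ into the displayed inequality for $\delta_\lambda$ gives $\delta_\lambda\lesssim\lambda^{-\sigma}=\lambda^{-\frac{2^*-q}{q-2}}$, which is the asserted upper bound.

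The principal obstacle is the non-degeneracy estimate $A_\lambda\gtrsim1$ (equivalently, excluding $v_\lambda\to0$ in $D^{1,2}(\R^N)$): once the rescaled ground states are known to stay uniformly away from the origin, the control of $\bar t_\lambda$ and the final comparison are routine. A secondary, dimension-specific point is checking that the fibre $t\mapsto J_\infty((v_\lambda)_t)$ is bounded above when $N=4$, which is exactly where $bS^2<1$ enters.
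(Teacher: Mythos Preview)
Your proposal is correct and follows the same idea the paper has in mind: testing $m_\infty$ along the dilation fibre through the rescaled ground state and bounding the discrepancy $J_\infty-J_\lambda$ by $\lambda^{-\sigma}\|v_\lambda\|_q^q$; the paper's one-line proof (``arguing as in the proof of Lemma~\ref{l28}'') is terse, but the same computation appears in detail a few pages later at \eqref{e430} with $w_\lambda$ in place of $v_\lambda$. The one place where you go beyond the paper's sketch is in supplying the non-degeneracy bound $A_\lambda\gtrsim1$ directly from the Nehari/Poho\v zaev identities and Sobolev; this is a nice self-contained step at the point where the corollary is stated, before the concentration--compactness Lemma~\ref{l34} is available.
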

\begin{proof}  Arguing as in the proof of Lemma 4.3, it is easy to show that 
$$
\delta_\lambda\lesssim \lambda^{-\frac{2^*-q}{q-2}},
$$
which together with Lemma 4.3 yields the desired conclusion.
\end{proof}

\subsection{Proof of Theorem 2.1 for large  $\lambda$}\label{s42}

We recall the P.-L.~Lions' concentration--compact\-ness lemma, which is at the core of our proof of  Theorem 2.1 as $\lambda\to \infty$.

\begin{lemma}[P.-L.~Lions \cite{Lions-1}]\label{l31}
Let $r>0$ and $2\leq q\leq 2^{*}$. If $(u_{n})$ is bounded in $H^{1}(\mathbb{R}^N)$ and if
$$\sup_{y\in\mathbb{R}^N}\int_{B_{r}(y)}|u_{n}|^{q}\to0\quad\textrm{as}\ n\to\infty,$$
then $u_{n}\to0$ in $L^{p}(\mathbb{R}^N)$ for $2<p<2^*$. Moreover, if $q=2^*$, then $u_{n}\to0$ in $L^{2^{*}}(\mathbb{R}^N)$.
\end{lemma}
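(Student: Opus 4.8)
This is the classical P.-L.~Lions vanishing lemma, and the plan is to prove it by combining a local interpolation inequality, the Sobolev embedding on balls, and a covering of $\R^N$ with finite overlap. First I would reduce to the case $q=2$: since $q\ge 2$ and each $B_r(y)$ is bounded, H\"older's inequality gives $\int_{B_r(y)}|u_n|^2\le|B_r|^{1-2/q}\big(\int_{B_r(y)}|u_n|^q\big)^{2/q}$, so the hypothesis for the exponent $q$ implies
\[
\delta_n:=\sup_{y\in\R^N}\int_{B_r(y)}|u_n|^2\longrightarrow 0\qquad(n\to\infty).
\]

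Next, set $t_0:=2+\frac{2(2^*-2)}{2^*}$; then $t_0\in(2,2^*)$, and the interpolation weight $\lambda\in(0,1)$ determined by $\frac{1}{t_0}=\frac{1-\lambda}{2}+\frac{\lambda}{2^*}$ satisfies $(1-\lambda)t_0=t_0-2$ and $\lambda t_0=2$. Fix a cover $\{B_r(y_i)\}_i$ of $\R^N$ whose overlap number depends only on $N$. On each ball, the interpolation inequality between $L^2$ and $L^{2^*}$, followed by the embedding $H^1(B_r(y_i))\hookrightarrow L^{2^*}(B_r(y_i))$ (with constant uniform in $i$ by translation invariance), yields
\[
\int_{B_r(y_i)}|u_n|^{t_0}\le\Big(\int_{B_r(y_i)}|u_n|^{2}\Big)^{\frac{t_0-2}{2}}\Big(\int_{B_r(y_i)}|u_n|^{2^*}\Big)^{\frac{2}{2^*}}\le C\,\delta_n^{\frac{t_0-2}{2}}\int_{B_r(y_i)}\big(|\nabla u_n|^2+|u_n|^2\big),
\]
with $C=C(N,r)$ independent of $i$ and $n$. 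Summing over $i$ (bounded overlap) and using that $(u_n)$ is bounded in $H^1$ gives $\|u_n\|_{t_0}^{t_0}\le C'\delta_n^{(t_0-2)/2}\to 0$. Since $(u_n)$ is bounded in $L^2$ and in $L^{2^*}$, one last interpolation — between $L^2$ and $L^{t_0}$ for $t\in(2,t_0]$, and between $L^{t_0}$ and $L^{2^*}$ for $t\in[t_0,2^*)$ — upgrades this to $u_n\to 0$ in $L^t(\R^N)$ for every $t\in(2,2^*)$.

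For the borderline assertion, suppose $\eps_n:=\sup_y\int_{B_r(y)}|u_n|^{2^*}\to 0$. With the same cover and local embedding,
\[
\int_{B_r(y_i)}|u_n|^{2^*}=\Big(\int_{B_r(y_i)}|u_n|^{2^*}\Big)^{\frac{2^*-2}{2^*}}\Big(\int_{B_r(y_i)}|u_n|^{2^*}\Big)^{\frac{2}{2^*}}\le C\,\eps_n^{\frac{2^*-2}{2^*}}\int_{B_r(y_i)}\big(|\nabla u_n|^2+|u_n|^2\big),
\]
and summing gives $\|u_n\|_{2^*}^{2^*}\le C'\eps_n^{(2^*-2)/2^*}\to 0$. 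As this is a textbook fact there is no essential difficulty; the only point demanding attention is that the overlap number of the cover and the constant in the local Sobolev embedding be fixed uniformly in the translation parameter $y$, so that the per-ball bounds sum cleanly against the global $H^1$-bound on $(u_n)$.
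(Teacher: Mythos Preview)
Your proof is correct and follows the standard textbook argument for Lions' vanishing lemma: reduce to $q=2$ by H\"older, establish convergence in one intermediate $L^{t_0}$ via local interpolation plus the translation-invariant Sobolev embedding on balls, sum over a bounded-overlap cover, and then interpolate globally; the $q=2^*$ endpoint is handled by the same splitting. The paper itself does not prove this lemma at all---it merely recalls the statement and cites \cite{Lions-1}---so there is no proof in the paper to compare against, and your argument is exactly the classical one that citation points to.
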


Using Lemma \ref{l31}, we establish the following.
 
\begin{lemma}\label{l34}  If $N=4$ or $N=3$ then there exists $\xi_\lambda\in (0,+\infty)$ such that $\xi_\lambda\to 0$ and
$$
v_\lambda-\xi_\lambda^{-\frac{N-2}{2}}v_1(\xi^{-1}_\lambda\cdot)\to 0
$$
in $D^{1,2}(\mathbb R^N)$ and $L^{2^*}(\mathbb R^N)$ as $\lambda\to \infty$, where $v_1$ is given by \eqref{e47}.
\end{lemma}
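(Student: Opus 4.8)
The plan is to run a concentration--compactness argument on the rescaled ground states $v_\lambda$ of $(R_\lambda)$ defined by \eqref{e41}, in the spirit of \cite{MM-1}. Recall from Lemma \ref{l24-3} that $\{v_\lambda\}$ is bounded in $H^1(\mathbb R^N)$, hence in $D^{1,2}(\mathbb R^N)$ and in $L^2(\mathbb R^N)$, and from Corollary \ref{c29} that $m_\lambda\to m_\infty$ as $\lambda\to\infty$. \emph{First} I would show that $\{v_\lambda\}$ is actually a minimizing sequence for the Sobolev constant $S$: since $\sigma>0$ and $\{v_\lambda\}$ is $H^1$-bounded, the terms $\lambda^{-\sigma}\!\int|v_\lambda|^2$ and $\lambda^{-\sigma}\!\int|v_\lambda|^q$ tend to $0$; combining the identity $m_\lambda=\tfrac1N a\|\nabla v_\lambda\|_2^2+(\tfrac14-\tfrac1{2^*})b\|\nabla v_\lambda\|_2^4$ from the proof of Lemma \ref{l24-3} with the strict monotonicity of $t\mapsto\tfrac1N at+(\tfrac14-\tfrac1{2^*})bt^2$ and with $m_\lambda\to m_\infty=J_\infty(v_1)$ forces $\|\nabla v_\lambda\|_2^2\to\|\nabla v_1\|_2^2$; plugging this into the Nehari identity for $v_\lambda$ and discarding the lower order terms gives $\|v_\lambda\|_{2^*}^{2^*}\to a\|\nabla v_1\|_2^2+b\|\nabla v_1\|_2^4=\|v_1\|_{2^*}^{2^*}$, and together with \eqref{e48} this yields $\|\nabla v_\lambda\|_2^2/\|v_\lambda\|_{2^*}^2\to S$.

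\emph{Next} I would rule out vanishing: if $\sup_{y}\int_{B_r(y)}|v_\lambda|^{2^*}\to 0$ for every $r>0$, then Lemma \ref{l31} with exponent $2^*$ would give $v_\lambda\to 0$ in $L^{2^*}(\mathbb R^N)$, contradicting $\|v_\lambda\|_{2^*}^{2^*}\to\|v_1\|_{2^*}^{2^*}>0$; since the $u_\lambda$, hence the $v_\lambda$, may be taken radially symmetric, the $L^{2^*}$-mass concentrates around the origin. One then picks a dilation parameter $\xi_\lambda\in(0,\infty)$ adapted to the concentration scale and passes to $\hat v_\lambda(x):=\xi_\lambda^{\frac{N-2}{2}}v_\lambda(\xi_\lambda x)$, again a non-vanishing minimizing sequence for $S$, bounded in $D^{1,2}(\mathbb R^N)$; along a subsequence $\hat v_\lambda\rightharpoonup V\neq 0$ weakly in $D^{1,2}$, and passing to the limit in $(R_\lambda)$ — the Kirchhoff coefficient $a+b\|\nabla v_\lambda\|_2^2$ converging to the constant $a+b\|\nabla v_1\|_2^2=\varpi$ and the $\lambda^{-\sigma}$ terms dropping out — shows that $V$ solves $-\big(a+b\|\nabla v_1\|_2^2\big)\Delta V=V^{2^*-1}$, so $V$ is a rescaled Talenti function. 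A Brezis--Lieb argument, together with the norm convergences from the first step and the strict subadditivity of $t\mapsto t^{2/2^*}$, upgrades weak to strong convergence in $D^{1,2}$ (hence, by $D^{1,2}\hookrightarrow L^{2^*}$, in $L^{2^*}$) and forces $\|\nabla V\|_2^2=\|\nabla v_1\|_2^2$ and $\|V\|_{2^*}^{2^*}=\|v_1\|_{2^*}^{2^*}$, which identifies $V$ with a rescaling of $v_1$; absorbing that residual dilation into $\xi_\lambda$ gives $v_\lambda-\xi_\lambda^{-\frac{N-2}{2}}v_1(\xi_\lambda^{-1}\cdot)\to 0$ in $D^{1,2}$ and $L^{2^*}$, and since the limit profile $v_1$ is the same along every subsequence the convergence holds for the full family.

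\emph{Finally}, $\xi_\lambda\to 0$ comes for free: because $N\in\{3,4\}$ the function $v_1$ decays like $|x|^{-(N-2)}$, so $v_1\notin L^2(\mathbb R^N)$; taking a subsequence along which $\hat v_\lambda\to v_1$ a.e., Fatou's lemma gives $\xi_\lambda^{-2}\|v_\lambda\|_2^2=\|\hat v_\lambda\|_2^2\to\infty$, while $\|v_\lambda\|_2^2$ stays bounded by Lemma \ref{l24-3}, whence $\xi_\lambda\to 0$.

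The delicate point is the second step: excluding dichotomy and promoting the weak limit to a strong limit, while correctly carrying the nonlocal Kirchhoff coefficient $a+b\|\nabla v_\lambda\|_2^2$ and the lower order $\lambda^{-\sigma}$ contributions through all the Nehari--Poho\v zaev identities. Once the reduction to a Sobolev-optimizing sequence is in place, this is essentially the classical analysis of minimizing sequences for $S$, but the presence of the extra Kirchhoff term — which enters the algebraic relations defining $v_1$, $\varpi$ and $m_\infty$ — is what makes the bookkeeping nontrivial.
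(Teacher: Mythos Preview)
Your argument is correct and takes a genuinely different, more direct, route than the paper. The paper treats $\{v_\lambda\}$ as a Palais--Smale sequence for $J_\infty$ at level $m_\infty$, invokes a full profile decomposition $v_\lambda=v_\infty+\sum_{j=1}^{k}(\zeta_\lambda^{(j)})^{-\frac{N-2}{2}}v^{(j)}(\cdot/\zeta_\lambda^{(j)})+\tilde v_\lambda$ in the style of \cite{Xie}, and then rules out $v_\infty\neq 0$ and $k\ge 2$ by an energy comparison using the auxiliary functional $J_\infty^A$ together with the $L^2$-argument (Fatou plus $v_\infty\notin L^2$ for $N\le 4$). You instead exploit the explicit identity $m_\lambda=\tfrac{a}{N}\|\nabla v_\lambda\|_2^2+(\tfrac14-\tfrac1{2^*})b\|\nabla v_\lambda\|_2^4$, whose strict monotonicity in $\|\nabla v_\lambda\|_2^2$ for $N=3,4$ forces the gradient norm (hence, via Nehari, the $L^{2^*}$ norm) to converge to that of $v_1$, so that $\{v_\lambda\}$ is an optimizing sequence for $S$; then the classical Lions/Brezis--Lieb analysis of Sobolev minimizers (radial, so no translations) does the rest, and the Talenti classification identifies the limit. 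This bypasses the profile-decomposition machinery entirely and makes the dichotomy exclusion a one-line concavity argument, at the price of being tied to the specific algebraic structure of the Kirchhoff energy; the paper's argument is more robust but heavier. One minor remark: the step ``passing to the limit in $(R_\lambda)$'' in your sketch is actually superfluous, since once you have norm convergence to the Sobolev-optimal values the Talenti characterization already pins down $V$ as a rescaling of $v_1$; this avoids having to control the coefficients $\lambda^{-\sigma}\xi_\lambda^2$ and $\lambda^{-\sigma}\xi_\lambda^{(2N-q(N-2))/2}$ in the rescaled equation before you know $\xi_\lambda\to 0$. Your $L^2$/Fatou argument for $\xi_\lambda\to 0$ is the same as the paper's.
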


\begin{proof}
Note that $v_\lambda$ is a positive radially symmetric function, and by Lemma \ref{l24-3}, $\{v_\lambda\}$ is bounded in $H^1(\mathbb R^N)$. Then there exists
coanstant $A\in \R$ and  $0\le v_\infty\in H^1(\mathbb R^N)$ such that as $\lambda\to \infty$, up to a subsequence, we have 
$$
\int_{\R^N}|\nabla v_\lambda|^2\to A^2,
$$ 
\begin{equation}\label{e419}
v_\lambda\rightharpoonup v_\infty   \quad {\rm weakly \ in} \  H^1(\mathbb R^N), \quad v_\lambda\to v_\infty \quad {\rm in} \ L^p(\mathbb R^N) \quad {\rm for \ any} \ p\in (2,2^*),
\end{equation}
and 
\begin{equation}\label{e420}
v_\lambda(x)\to v_\infty(x) \quad \text{a.e.~on $\R^N$},  \qquad v_\lambda\to v_\infty \quad\text{in}  \  L^2_{loc}(\mathbb R^N).
\end{equation}
Moreover, $v_\infty$ verifies the equation
$$
-(a+bA^2)\Delta v=v^{2^*-1}.
$$

Observe that
$$
J_\infty(v_\lambda)=J_\lambda(v_\lambda)+\frac{\lambda^{-\sigma}}{q}\int_{\mathbb R^N}|v_\lambda|^q-\frac{\lambda^{-\sigma}}{2}\int_{\mathbb R^N}|v_\lambda|^2=m_\lambda+o(1)=m_\infty+o(1),
$$
and 
$$
J'_\infty(v_\lambda)v=J'_\lambda(v_\lambda)v+\lambda^{-\sigma}\int_{\mathbb R^N}|v_\lambda|^{q-2}v_\lambda v-\lambda^{-\sigma}\int_{\mathbb R^N}v_\lambda v=o(1).
$$
Therefore, $\{v_\lambda\}$ is a $(PS)$ sequence for $J_\infty$ at level $m_\infty$.

By Lemma \ref{l31} and an argument similar to that in \cite{Xie},  it is standard to show that there exists $\zeta^{(j)}_\lambda\in (0,+\infty)$, $v^{(j)}\in D^{1,2}(\mathbb R^N)$ with $j=1,2,\dots, k$ where $k$  is a non-negative integer, such that
\begin{equation}\label{e421}
v_\lambda=v_\infty+\sum_{j=1}^k(\zeta^{(j)}_\lambda)^{-\frac{N-2}{2}}v^{(j)}((\zeta^{(j)}_\lambda)^{-1} x)+\tilde v_\lambda,
\end{equation}
where $\tilde v_\lambda\to 0$ in $D^{1,2}(\mathbb R^N)$, $\zeta_\lambda^{(j)}\to 0$ as $\lambda\to \infty$, and  $v^{(j)}$ are nontrivial solutions of the equation 
$$
-(a+bA^2)\Delta v=v^{2^*-1}.
$$
Moreover, we have 
\begin{equation}\label{e422}
A^2= \|v_\infty\|^2_{D^{1,2}(\mathbb R^N)}+\sum_{j=1}^k\|v^{(j)}\|^2_{D^{1,2}(\mathbb R^N)},
\end{equation}
and 
\begin{equation}\label{e423}
m_\infty=J^A_\infty(v_\infty)+\sum_{j=1}^kJ^A_\infty(v^{(j)}),
\end{equation}
where 
$$
J_\infty^A(v)=(\frac{a}{2}+\frac{bA^2}{4})\int_{\R^N}|\nabla v|^2-\frac{1}{2^*}\int_{\R^N}|v|^{2^*}.
$$
For any solution $v$ of the equation $-(a+bA^2)\Delta v=v^{2^*-1}$, we have 
$$
(a+bA^2)\int_{\R^N}|\nabla v|^2=\int_{\R^N}|v|^{2^*}.
$$
Therefore, we obtain
$$
\begin{array}{lcl}
J^A_\infty(v)&=&(\frac{a}{2}+\frac{bA^2}{4})\int_{\R^N}|\nabla v|^2-\frac{1}{2^*}\int_{\R^N}|v|^{2^*}\\
&=&(\frac{a}{2}+\frac{bA^2}{4})\int_{\R^N}|\nabla v|^2-\frac{1}{2^*}(a+bA^2)\int_{\R^N}|\nabla v|^2\\
&=&\frac{1}{N}a\int_{\R^N}|\nabla v|^2+(\frac{1}{4}-\frac{1}{2^*})bA^2\int_{\R^N}|\nabla v|^2\\
&\ge &\frac{1}{N}a\int_{\R^N}|\nabla v|^2+(\frac{1}{4}-\frac{1}{2^*})b(\int_{\R^N}|\nabla v|^2)^2\\
&=&J_\infty(v).
\end{array}
$$
Since $J_\infty(v_\infty)\ge 0$ and $J_\infty(v^{(j)})\ge m_\infty$ for $j=1,2,\cdots, k,$ we conclude that
$J^A_\infty(v_\infty)\ge 0$ and $J^A_\infty(v^{(j)})\ge m_\infty$ for all $j=1,2,\cdots, k.$

If $N=4$ or $3$ then by \eqref{e412} and Fatou's lemma we have 
$$
\|v_\infty\|^2_2\le \liminf_{\lambda\to \infty}\|v_\lambda\|_2^2=\frac{2(2^*-q)}{q(2^*-2)}\int_{\R^N}|v_\infty|^q<\infty.
$$
Note that $v_\infty\notin L^2(\R^N)$ whenever $v_\infty\not=0$,  therefore,  $v_\infty=0$ and hence $k=1$. Thus, we obtain  
$J_\infty(v^{(1)})=m_\infty$ and hence $v^{(1)}=v_\rho$ for some $\rho\in (0,+\infty)$. Therefore, we conclude that 
$$v_\lambda-\xi_\lambda^{-\frac{N-2}{2}}v_1(\xi_\lambda^{-1}\cdot )\to 0
$$ in $D^{1,2}(\mathbb R^N)$ as $\lambda\to 0$, where $v_1$ is given by \eqref{e47} and 
$\xi_\lambda:=\rho\zeta_\lambda^{(1)}\in (0,+\infty)$ satisfying $\xi_\lambda\to 0$ as $\lambda\to 0$.
Moreover, $A^2=\lim_{\lambda\to 0}\int_{\R^N}|\nabla v_\lambda|^2=\int_{\R^N}|\nabla v^{(1)}|^2$, we conclude 
that $v^{(1)}$ is a solution of the equation
$-(a+b\int_{\R^N}|\nabla v|^2)\Delta v=v^{2^*-1}$. 
\end{proof}

We perform an additional rescaling
\begin{equation}\label{e424}
w(x)=\xi_\lambda^{\frac{N-2}{2}} v(\xi_\lambda x), 
\end{equation}
where $\xi_\lambda\in (0,+\infty)$ is given in Lemma \ref{l34}. This rescaling transforms $(Q_\lambda)$ into an equivalent equation
$$
-(a+b\int_{\mathbb R^N}|\nabla w|^2)\Delta w+\lambda^{-\sigma}\xi_\lambda^2w=\lambda^{-\sigma}\xi_\lambda^{\frac{2N-q(N-2)}{2}}w^{q-1}+w^{2^*-1},
\quad\text{in} \ \R^N,
\eqno(\tilde R_\lambda)
$$

The  corresponding energy functional is given by
\begin{equation}\label{e425}
\begin{array}{lcl}
\tilde J_\lambda(w):&=&\frac{1}{2}\int_{\mathbb R^N}a|\nabla w|^2+\lambda^{-\sigma}\xi_\lambda^2|w|^2+\frac{b}{4}(\int_{\mathbb R^N}|\nabla w|^2)^2\\
&\mbox{} &\quad -\frac{1}{q}\lambda^{-\sigma}\xi_\lambda^{\frac{2N-q(N-2)}{2}}\int_{\mathbb R^N}|w|^q-\frac{1}{2^*}\int_{\mathbb R^N}|w|^{2^*}.
\end{array}
\end{equation}

It is straightforward to verify the following.

\begin{lemma}\label{l36}
Let $\lambda>0$, $v$ is the rescaling of $u\in H^1(\mathbb R^N)$ and $w$ is the rescaling  of $v$ given in  \eqref{e41} and \eqref{e424}, respectively. Then:

\begin{enumerate}
	\item[$(a)$] $\|\nabla w\|_2^2= \|\nabla v\|_{2}^{2}=\|\nabla u\|_{2}^{2}$, $\|w\|^{2^*}_{2^*}=\|v\|_{2^*}^{2^*}=\|u\|_{2^*}^{2^*}$, \smallskip

\item[$(b)$]  $\xi_\lambda^{2}\|w\|^2_2=\|v\|_2^2=\lambda^{1+\sigma}\| u\|_2^2$,    $\xi_\lambda^{\frac{2N-q(N-2)}{2}}\|w\|^q_q=\|v\|_q^q=\lambda^{\sigma} \|u\|_q^q$, \smallskip

\item[$(c)$]  $\tilde J_\lambda(w)=J_\lambda(v)=I_\lambda(u)$. 
\end{enumerate}
\end{lemma}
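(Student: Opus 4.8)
The plan is to deduce everything from two ingredients: the relations between $u$ and $v$ already recorded in Lemma~\ref{l310}, and a one-line change of variables comparing $v$ with $w$. Since Lemma~\ref{l310} supplies $\|\nabla u\|_2^2=\|\nabla v\|_2^2$, $\|u\|_{2^*}^{2^*}=\|v\|_{2^*}^{2^*}$, $\lambda^{1+\sigma}\|u\|_2^2=\|v\|_2^2$, $\lambda^{\sigma}\|u\|_q^q=\|v\|_q^q$ and $I_\lambda(u)=J_\lambda(v)$, only the link between $w$ and $v$ remains to be established.

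First I would fix the notation $\eta:=\xi_\lambda$, so that $w(x)=\eta^{\frac{N-2}{2}}v(\eta x)$ by \eqref{e424}; this is exactly the rescaling \eqref{e44} that leaves $\varpi$ invariant. Differentiating gives $\nabla w(x)=\eta^{\frac N2}(\nabla v)(\eta x)$, and the substitution $y=\eta x$ yields $\|\nabla w\|_2^2=\eta^{N}\eta^{-N}\|\nabla v\|_2^2=\|\nabla v\|_2^2$. For the critical term, the exponent $\frac{N-2}{2}\cdot 2^*=N$ cancels the Jacobian exactly, so $\|w\|_{2^*}^{2^*}=\|v\|_{2^*}^{2^*}$. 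For the lower-order norms, $\|w\|_2^2=\eta^{N-2}\eta^{-N}\|v\|_2^2=\eta^{-2}\|v\|_2^2$ and $\|w\|_q^q=\eta^{\frac{(N-2)q}{2}}\eta^{-N}\|v\|_q^q=\eta^{-\frac{2N-q(N-2)}{2}}\|v\|_q^q$; multiplying by $\xi_\lambda^2$ and by $\xi_\lambda^{\frac{2N-q(N-2)}{2}}$ respectively gives the identities of part~(b), and chaining these with Lemma~\ref{l310} completes (a) and (b).

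For part~(c) I would substitute the five identities just obtained directly into the definition \eqref{e425} of $\tilde J_\lambda(w)$: the gradient term $\frac a2\int|\nabla w|^2$ becomes $\frac a2\|\nabla v\|_2^2$, the mass term $\frac{\lambda^{-\sigma}}{2}\xi_\lambda^2\int|w|^2$ becomes $\frac{\lambda^{-\sigma}}{2}\|v\|_2^2$, the quartic term $\frac b4\big(\int|\nabla w|^2\big)^2$ is unchanged, the subcritical term $\frac1q\lambda^{-\sigma}\xi_\lambda^{\frac{2N-q(N-2)}{2}}\int|w|^q$ becomes $\frac{\lambda^{-\sigma}}{q}\|v\|_q^q$, and $\frac1{2^*}\int|w|^{2^*}$ becomes $\frac1{2^*}\|v\|_{2^*}^{2^*}$. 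Comparing with the definition \eqref{e42} of $J_\lambda$ shows $\tilde J_\lambda(w)=J_\lambda(v)$, and $J_\lambda(v)=I_\lambda(u)$ is Lemma~\ref{l310}(c).

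There is no genuine obstacle here; the statement is \emph{straightforward to verify}, as announced. The only point deserving a moment's care is the bookkeeping of the powers of $\xi_\lambda$, in particular the identities $\frac{N-2}{2}\cdot 2^*=N$ and $\frac{(N-2)q}{2}-N=-\frac{2N-q(N-2)}{2}$ that produce precisely the exponents appearing in the statement, after which every line is a single substitution. I would therefore present the argument tersely, remarking that it is the same computation already used for the $\varpi$-invariant rescaling \eqref{e44}.
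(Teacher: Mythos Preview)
Your proposal is correct and is exactly the straightforward verification the paper has in mind; the paper gives no proof beyond the remark that the lemma ``is straightforward to verify,'' and your change-of-variables bookkeeping, combined with Lemma~\ref{l310}, is precisely that verification.
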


Let $w_\lambda(x)=\xi_\lambda^{\frac{N-2}{2}} v_\lambda(\xi_\lambda x)$ where the $v_\lambda$ is a ground-state of $(R_\lambda)$. Then   
 it follows from Lemma 4.7(c) that  $w_\lambda$ is a ground state of $(\tilde R_\lambda)$. 
By Lemma \ref{l34} we conclude that 
\begin{equation}\label{e426}
\|\nabla(w_\lambda-v_1)\|_2\to 0, \qquad \|w_\lambda-v_1\|_{2^*}\to 0  \qquad\text{as} \ \lambda\to \infty.
\end{equation}

Note that the corresponding Nehari and Poho\v zaev identities read as follows
$$
\begin{array}{cl}
&a\int_{\mathbb R^N}|\nabla w_\lambda|^2+\lambda^{-\sigma}\xi_\lambda^{2}\int_{\mathbb R^N}|w_\lambda|^2+b\left(\int_{\R^N}|\nabla w_\lambda|^2\right)^2\\
&\quad =\int_{\mathbb R^N}|w_\lambda|^{2^*}+\lambda^{-\sigma}\xi_\lambda^{\frac{2N-q(N-2)}{2}}\int_{\mathbb R^N}|w_\lambda|^q,
\end{array}
$$
and 
$$
\begin{array}{cl}
&\frac{a}{2^*}\int_{\mathbb R^N}|\nabla w_\lambda|^2+\frac{1}{2}
\lambda^{-\sigma}\xi_\lambda^{2}\int_{\mathbb R^N}|w_\lambda|^2+\frac{b}{2^*}\left(\int_{\R^N}|\nabla w_\lambda|^2\right)^2\\
&\quad =\frac{1}{2^*}\int_{\mathbb R^N}|w_\lambda|^{2^*}+\frac{1}{q}\lambda^{-\sigma}\xi_\lambda^{\frac{2N-q(N-2)}{2}}\int_{\mathbb R^N}|w_\lambda|^q.
\end{array}
$$
We conclude that 
$$
\left(\frac{1}{2}-\frac{1}{2^*}\right)\lambda^{-\sigma}\xi_\lambda^{2}\int_{\mathbb R^N}|w_\lambda|^2
 =\left(\frac{1}{q}-\frac{1}{2^*}\right)\lambda^{-\sigma}\xi_\lambda^{\frac{2N-q(N-2)}{2}}\int_{\mathbb R^N}|w_\lambda|^q.
$$
Thus, we obtain
\begin{equation}\label{e427}
\xi_\lambda^{\frac{(N-2)(q-2)}{2}}\int_{\mathbb R^N}|w_\lambda|^2=\frac{2(2^*-q)}{q(2^*-2)}\int_{\mathbb R^N}|w_\lambda|^q.
\end{equation}
To control the norm $\|w_\lambda\|_2$ from below, we give the following  estimate:
\begin{lemma}\label{l37}
There exists a constant $C>0$ such that 
\begin{equation}\label{e428}
w_\lambda(x)\ge C\varpi_\lambda^{\frac{N-2}{2}}|x|^{-(N-2)}\exp(-\varpi_\lambda^{-\frac{1}{2}}\lambda^{-\frac{\sigma}{2}}\xi_\lambda |x|), \quad |x|\ge 1,
\end{equation}
where $\varpi_\lambda=a+b\int_{\mathbb R^N}|\nabla w_\lambda|^2$.
\end{lemma}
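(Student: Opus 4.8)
The plan is to establish the pointwise lower bound by a comparison-function (barrier) argument applied to the equation $(\tilde R_\lambda)$ satisfied by $w_\lambda$. First I would record that $w_\lambda$ solves
\[
-\varpi_\lambda\,\Delta w_\lambda + \lambda^{-\sigma}\xi_\lambda^2\, w_\lambda
= \lambda^{-\sigma}\xi_\lambda^{\frac{2N-q(N-2)}{2}} w_\lambda^{q-1} + w_\lambda^{2^*-1} \ge 0 \qquad \text{in }\R^N,
\]
where $\varpi_\lambda = a + b\int_{\R^N}|\nabla w_\lambda|^2$; dividing by $\varpi_\lambda$ this becomes $-\Delta w_\lambda + \varpi_\lambda^{-1}\lambda^{-\sigma}\xi_\lambda^2\, w_\lambda \ge 0$. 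So $w_\lambda$ is a positive supersolution of the linear operator $-\Delta + \kappa_\lambda^2$ with $\kappa_\lambda := \varpi_\lambda^{-1/2}\lambda^{-\sigma/2}\xi_\lambda$. Since $w_\lambda$ is radial, positive and (being a rescaled ground state, cf.\ Lemma \ref{l36} and the regularity of solutions of $(P_\lambda)$) continuous and bounded below by a positive constant on the sphere $|x|=1$, say $w_\lambda(x)\ge c_\lambda>0$ for $|x|=1$, it suffices to compare $w_\lambda$ on $\{|x|\ge 1\}$ with the explicit subsolution
\[
\Phi_\lambda(x) := c_\lambda\, |x|^{-(N-2)}\exp\!\big(-\kappa_\lambda(|x|-1)\big).
\]

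The key steps, in order, are: (i) verify that $\Phi_\lambda$ is a (classical) subsolution of $-\Delta + \kappa_\lambda^2$ on $|x|>1$ — a direct computation gives $-\Delta\big(|x|^{-(N-2)}e^{-\kappa_\lambda|x|}\big) = \kappa_\lambda^2 |x|^{-(N-2)}e^{-\kappa_\lambda|x|} - \kappa_\lambda(N-2)|x|^{-(N-1)}e^{-\kappa_\lambda|x|}$, so actually $(-\Delta+\kappa_\lambda^2)\Phi_\lambda \le 0$, hence $\Phi_\lambda$ is a subsolution; (ii) on the boundary $|x|=1$ we have $\Phi_\lambda = c_\lambda \le w_\lambda$, and as $|x|\to\infty$ both $\Phi_\lambda$ and $w_\lambda$ tend to $0$ (the latter because $w_\lambda\in D^{1,2}\cap L^{2^*}$ and is radial, so $w_\lambda(x)\to 0$); (iii) apply the comparison principle for $-\Delta+\kappa_\lambda^2$ on the exterior domain $\{|x|>1\}$ to conclude $w_\lambda \ge \Phi_\lambda$ there; (iv) absorb the factors $e^{\kappa_\lambda}$ and $c_\lambda$, together with the prefactor discrepancy, into a single constant $C>0$ independent of $\lambda$, using that $\varpi_\lambda$ is bounded above and below away from $0$ (from Lemma \ref{l24-3} and $\varpi_\lambda\ge a$) and that $c_\lambda$ is bounded below uniformly; this last point follows from \eqref{e426}, since $w_\lambda\to v_1$ in $D^{1,2}$ and $L^{2^*}$, and the limit $v_1 = W_1(\cdot/\sqrt\varpi)$ is strictly positive and continuous, so a uniform lower bound on $|x|=1$ for large $\lambda$ is available (after possibly rescaling $v_1$ one also needs local strong convergence, which comes from elliptic estimates on the $(PS)$ decomposition).

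The main obstacle I anticipate is step (iv), specifically securing the \emph{uniform-in-$\lambda$} lower bound $w_\lambda(x)\ge c>0$ on the unit sphere, and more generally ruling out that the "mass" of $w_\lambda$ escapes to infinity or concentrates at the origin in a way that degrades the boundary data of the comparison argument. The convergence in \eqref{e426} is only in $D^{1,2}$ and $L^{2^*}$, not in $L^\infty_{\mathrm{loc}}$, so I would upgrade it: by standard elliptic regularity (Brezis–Kato / Moser iteration applied to $(\tilde R_\lambda)$, whose right-hand side is controlled in $L^{N/2}_{\mathrm{loc}}$ uniformly) one gets $w_\lambda \to v_1$ in $C^1_{\mathrm{loc}}(\R^N)$, whence $w_\lambda(x) \ge \tfrac12 \min_{|x|=1} v_1(x) =: c > 0$ for all large $\lambda$ and all $|x|=1$. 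Once this uniform positivity on $|x|=1$ is in hand, the barrier comparison is routine, and the stated estimate \eqref{e428} follows, noting $\kappa_\lambda = \varpi_\lambda^{-1/2}\lambda^{-\sigma/2}\xi_\lambda$ matches the exponent in the claim after writing $\sigma = \tfrac{2^*-q}{q-2}$. For the finitely many small $\lambda$ not covered by the asymptotic argument one can either shrink the constant or invoke the classical exterior-domain decay estimate for a single fixed positive solution.
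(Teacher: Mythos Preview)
Your proposal is correct and follows essentially the same route as the paper: a comparison/barrier argument showing that $w_\lambda$ is a positive supersolution of $-\Delta+\kappa_\lambda^2$ (with $\kappa_\lambda=\varpi_\lambda^{-1/2}\lambda^{-\sigma/2}\xi_\lambda$) on $\{|x|\ge 1\}$, compared against the explicit subsolution $|x|^{-(N-2)}e^{-\kappa_\lambda|x|}$. The only cosmetic difference is that the paper first rescales $\tilde w_\lambda(x)=w_\lambda(\sqrt{\varpi_\lambda}\,x)$ to remove the Kirchhoff coefficient and then invokes \cite[Lemma~4.8]{Moroz-1} for the standard operator $-\Delta+\lambda^{-\sigma}\xi_\lambda^2$, whereas you absorb $\varpi_\lambda$ directly into the exponential rate; your version is more self-contained and also spells out the key uniform lower bound $w_\lambda\ge c>0$ on $|x|=1$ via elliptic regularity and \eqref{e426}, which the paper leaves inside the cited reference.
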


\begin{proof} 
It is easy to see that  $\tilde w_\lambda(x)=w_\lambda(\sqrt {\varpi_\lambda} x)$ satisfies the following 
$$
-\Delta \tilde w_\lambda +\lambda^{-\sigma}\xi_\lambda^2\tilde w_\lambda=\lambda^{-\sigma}\xi_\lambda^{\frac{2N-q(N-2)}{2}}\tilde w_\lambda^{q-1}+\tilde w_\lambda^{2^*-1}>0.
$$
Arguing as in the proof in \cite[Lemma 4.8]{Moroz-1}, we show that
$$
\tilde w _\lambda(x)\ge C|x|^{-(N-2)}\exp(-\lambda^{-\frac{\sigma}{2}}\xi_\lambda |x|), \quad |x|\ge 1.
$$
Therefore, we obtain
$$
w_\lambda(x)=\tilde w_\lambda(\frac{x}{\sqrt{\varpi_\lambda}})\ge C\varpi_\lambda^{\frac{N-2}{2}}|x|^{-(N-2)}\exp(-\varpi_\lambda^{-\frac{1}{2}}\lambda^{-\frac{\sigma}{2}}\xi_\lambda |x|),
\quad |x|\ge 1.
$$
The proof is complete. 
\end{proof}

Since $0<C_1<\varpi_\lambda<C_2<+\infty$ for some constants $C_1, C_2$ which are independent of $\lambda>0$, as consequences of the above lemma, we have the following two lemmas.

\begin{lemma}\label{l38}
If $N=3$, then $\|w_\lambda\|_2^2\gtrsim \lambda^{\frac{\sigma}{2}}\xi_\lambda^{-1}$.
\end{lemma}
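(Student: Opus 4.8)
The plan is to use the pointwise lower bound from Lemma~\ref{l37} together with the algebraic identity \eqref{e427} which links $\|w_\lambda\|_2^2$ and $\|w_\lambda\|_q^q$. First I would observe that by Lemma~\ref{l37}, with $N=3$, for $|x|\ge 1$ we have
\begin{equation*}
w_\lambda(x)\ge C\varpi_\lambda^{1/2}|x|^{-1}\exp\bigl(-\varpi_\lambda^{-1/2}\lambda^{-\sigma/2}\xi_\lambda|x|\bigr),
\end{equation*}
and since $0<C_1\le\varpi_\lambda\le C_2<\infty$ uniformly in $\lambda$, the prefactor $\varpi_\lambda^{1/2}$ and the factor $\varpi_\lambda^{-1/2}$ in the exponent are both of order $1$. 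Thus there is a constant $c>0$, independent of $\lambda$, with
\begin{equation*}
w_\lambda(x)\ge c\,|x|^{-1}\exp\bigl(-c^{-1}\lambda^{-\sigma/2}\xi_\lambda|x|\bigr),\qquad |x|\ge 1.
\end{equation*}

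Next I would integrate the square of this bound over the region $|x|\ge 1$. Writing $\beta_\lambda:=\lambda^{-\sigma/2}\xi_\lambda$ (which tends to $0$ as $\lambda\to\infty$, since $\xi_\lambda\to 0$), we get
\begin{equation*}
\|w_\lambda\|_2^2\ge c^2\int_{|x|\ge 1}|x|^{-2}\exp\bigl(-2c^{-1}\beta_\lambda|x|\bigr)\,dx
= c^2\,\omega_2\int_1^\infty \exp\bigl(-2c^{-1}\beta_\lambda r\bigr)\,dr,
\end{equation*}
where $\omega_2$ is the surface area of the unit sphere in $\R^3$ (the $|x|^{-2}$ exactly cancels the Jacobian $r^2$, leaving $r^0\,dr$; this is the special feature of dimension $N=3$). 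The last integral equals $\frac{c}{2}\beta_\lambda^{-1}\exp(-2c^{-1}\beta_\lambda)$, and since $\beta_\lambda\to 0$ the exponential factor is bounded below by a positive constant for large $\lambda$. Hence $\|w_\lambda\|_2^2\gtrsim \beta_\lambda^{-1}=\lambda^{\sigma/2}\xi_\lambda^{-1}$, which is the claim.

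There is essentially no serious obstacle here; the only point requiring a little care is to make sure the estimate is taken over a region where the lower bound of Lemma~\ref{l37} is valid (namely $|x|\ge 1$), and to check that the exponential decay rate $\beta_\lambda$ is small enough that $\int_1^\infty e^{-2c^{-1}\beta_\lambda r}\,dr$ is comparable to $\beta_\lambda^{-1}$ rather than being absorbed into a constant — this is guaranteed by $\xi_\lambda\to 0$ and $\sigma>0$. If one wanted a matching upper bound one would instead use \eqref{e427} together with the boundedness of $\|w_\lambda\|_q^q$ (which follows from the $D^{1,2}$ and $L^q$ convergence in Lemma~\ref{l34} after the rescaling), but for the stated lower bound the pointwise estimate alone suffices.
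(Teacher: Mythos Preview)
Your proof is correct and is precisely the argument the paper has in mind: the paper simply states Lemma~\ref{l38} as a direct consequence of Lemma~\ref{l37}, and the intended computation is exactly the integration you carry out (the cancellation of $|x|^{-2}$ against the spherical Jacobian $r^2$ in $N=3$, followed by $\int_1^\infty e^{-2c^{-1}\beta_\lambda r}\,dr\sim\beta_\lambda^{-1}$ since $\beta_\lambda\to 0$). Your opening reference to \eqref{e427} is unnecessary for the lower bound itself, but otherwise the argument matches the paper's implied proof.
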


\begin{lemma}\label{l39}
If $N=4$, then $\|w_\lambda\|_2^2\gtrsim  - \ln(\lambda^{-\sigma}\xi_\lambda^{2})$.
\end{lemma}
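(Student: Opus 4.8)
The plan is to read off the bound directly from the pointwise estimate of Lemma~\ref{l37}, mimicking (and in fact simplifying) the $N=3$ argument of Lemma~\ref{l38}. First I would record that, since $\xi_\lambda\to0$ and $\sigma=\frac{2^*-q}{q-2}>0$, the quantity
$$
\kappa_\lambda:=2\,\varpi_\lambda^{-1/2}\lambda^{-\sigma/2}\xi_\lambda
$$
tends to $0$ as $\lambda\to\infty$, while $0<C_1\le\varpi_\lambda\le C_2<\infty$ uniformly in $\lambda$. Specialising \eqref{e428} to $N=4$ and squaring gives, for $|x|\ge1$,
$$
w_\lambda(x)^2\ \ge\ C\,\varpi_\lambda^{2}\,|x|^{-4}\exp(-\kappa_\lambda|x|)\ \gtrsim\ |x|^{-4}\exp(-\kappa_\lambda|x|).
$$
Integrating over $\{|x|\ge1\}\subset\R^4$ in polar coordinates, the weight $|x|^{-4}$ against the surface measure $r^{3}\,dr$ collapses to $r^{-1}\,dr$, whence
$$
\|w_\lambda\|_2^2\ \ge\ \int_{|x|\ge1}w_\lambda(x)^2\,dx\ \gtrsim\ \int_1^{\infty}r^{-1}\exp(-\kappa_\lambda r)\,dr.
$$

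Next I would bound the one-dimensional integral from below by restricting to the range $1\le r\le \kappa_\lambda^{-1}$, which is nonempty once $\kappa_\lambda<1$, i.e.\ for all large $\lambda$. On this range $\kappa_\lambda r\le1$, hence $\exp(-\kappa_\lambda r)\ge e^{-1}$ and
$$
\int_1^{\infty}r^{-1}\exp(-\kappa_\lambda r)\,dr\ \ge\ e^{-1}\!\int_1^{\kappa_\lambda^{-1}}\frac{dr}{r}\ =\ -e^{-1}\ln\kappa_\lambda,
$$
so that $\|w_\lambda\|_2^2\gtrsim -\ln\kappa_\lambda$ (note $-\ln\kappa_\lambda>0$ here). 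Finally I would convert $-\ln\kappa_\lambda$ into $-\ln(\lambda^{-\sigma}\xi_\lambda^{2})$: by the definition of $\kappa_\lambda$,
$$
-2\ln\kappa_\lambda=-\ln\!\big(\lambda^{-\sigma}\xi_\lambda^{2}\big)+\ln\varpi_\lambda-\ln4=-\ln\!\big(\lambda^{-\sigma}\xi_\lambda^{2}\big)+O(1),
$$
using the uniform two-sided bound on $\varpi_\lambda$. Since $\lambda^{-\sigma}\to0$ and $\xi_\lambda\to0$ force $\lambda^{-\sigma}\xi_\lambda^{2}\to0$, the right-hand side diverges to $+\infty$ and the $O(1)$ error is absorbed; hence $-\ln\kappa_\lambda\gtrsim -\ln(\lambda^{-\sigma}\xi_\lambda^{2})$ for large $\lambda$, and combining the displays yields the claim.

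I do not expect a genuine obstacle here: the estimate is a short consequence of Lemma~\ref{l37}, entirely parallel to Lemma~\ref{l38}. The only points requiring care are verifying $\kappa_\lambda\to0$ (which makes the logarithmic lower bound on $\int_1^\infty r^{-1}e^{-\kappa_\lambda r}\,dr$ valid and positive for large $\lambda$) and checking that the bounded discrepancy between $-\ln\kappa_\lambda$ and $-\frac12\ln(\lambda^{-\sigma}\xi_\lambda^{2})$ is negligible because the latter tends to $+\infty$; both are immediate from $\xi_\lambda\to0$, $\sigma>0$, and the uniform bounds $C_1\le\varpi_\lambda\le C_2$.
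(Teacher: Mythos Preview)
Your proof is correct and is exactly the natural way to fill in the details: the paper itself does not give an explicit argument for this lemma but simply states it as a direct consequence of the pointwise lower bound \eqref{e428} in Lemma~\ref{l37} together with the uniform bounds $0<C_1\le\varpi_\lambda\le C_2$. Your computation---squaring \eqref{e428}, integrating in polar coordinates over $\{|x|\ge1\}$, truncating at $r=\kappa_\lambda^{-1}$, and absorbing the bounded discrepancy between $\ln\kappa_\lambda$ and $\tfrac12\ln(\lambda^{-\sigma}\xi_\lambda^{2})$---is precisely what the paper is implicitly invoking.
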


To prove our main result, the key point is to show the boundedness of $\|w_\lambda\|_q$. 

\begin{lemma}\label{l311}
If $N=3,4$ and  $\frac{N}{N-2}<s<2^*$, then $\|w_\lambda\|_s^s\sim 1$ as $\lambda\to \infty$. Furthermore, $w_\lambda\to v_1$ in $L^s(\mathbb R^N)$ as $\lambda\to \infty$. 
\end{lemma}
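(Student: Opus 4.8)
The plan is to trap $\|w_\lambda\|_s^s$ between two positive constants: the lower bound follows at once from the pointwise estimate of Lemma~\ref{l37}, while the upper bound --- the crux --- requires a uniform-in-$\lambda$ pointwise \emph{upper} decay estimate for $w_\lambda$. Once $\|w_\lambda\|_s^s\sim1$ is known, together with a uniform control of the tails $\int_{|x|\ge R}w_\lambda^s$, the strong convergence $w_\lambda\to v_1$ in $L^s(\mathbb R^N)$ will follow from the local convergence already recorded in~\eqref{e426}.

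For the lower bound, recall that $C_1\le\varpi_\lambda\le C_2$ and $\lambda^{-\sigma/2}\xi_\lambda\to0$, so Lemma~\ref{l37} gives $w_\lambda(x)\ge c\,|x|^{-(N-2)}$ on the annulus $\{1\le|x|\le2\}$ for all large $\lambda$; integrating there yields $\|w_\lambda\|_s^s\ge\int_{1\le|x|\le2}w_\lambda^s\gtrsim1$. For the upper bound, divide $(\tilde R_\lambda)$ by $\varpi_\lambda$ to write $-\Delta w_\lambda+V_\lambda w_\lambda=F_\lambda$ with $V_\lambda:=\varpi_\lambda^{-1}\lambda^{-\sigma}\xi_\lambda^2>0$ and $F_\lambda:=\varpi_\lambda^{-1}\big(w_\lambda^{2^*-1}+\lambda^{-\sigma}\xi_\lambda^{\frac{2N-q(N-2)}{2}}w_\lambda^{q-1}\big)\ge0$, noting that all the $\lambda$-dependent coefficients are bounded and tend to $0$. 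Because $w_\lambda\to v_1$ in $L^{2^*}(\mathbb R^N)$ the densities $w_\lambda^{2^*}\,dx$ develop no concentration, so a standard $\varepsilon$-regularity argument (Brezis--Kato together with Moser iteration on small balls, using that $\|w_\lambda^{2^*-2}\|_{L^{N/2}(B_r(y))}$ is small uniformly in $y$ and in large $\lambda$ once $r$ is small) produces a uniform bound $\|w_\lambda\|_{L^\infty(\mathbb R^N)}\le C$. Combining this with the radial-decay inequality $w_\lambda(x)\le C\|\nabla w_\lambda\|_2\,|x|^{-(N-2)/2}$ and the boundedness of $\{\|\nabla w_\lambda\|_2\}$ (Lemma~\ref{l24-3}) controls the size of $F_\lambda$ at infinity; then, representing $w_\lambda=G_{V_\lambda}*F_\lambda$ via the Green function of $-\Delta+V_\lambda$ (which satisfies $0<G_{V_\lambda}(x)\le C|x|^{-(N-2)}e^{-\frac12\sqrt{V_\lambda}\,|x|}$) and bootstrapping the pointwise bound through this formula exactly as in \cite[Lemma~4.8]{Moroz-1}, one arrives at
$$
w_\lambda(x)\le C|x|^{-(N-2)}e^{-\tfrac12\sqrt{V_\lambda}\,|x|}\qquad(|x|\ge1),
$$
uniformly for large $\lambda$.

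Since $(N-2)s>N$, this last estimate gives $\int_{|x|\ge1}w_\lambda^s\le C\int_{|x|\ge1}|x|^{-(N-2)s}\,dx<\infty$ uniformly, while on $B_1$ the convergence $w_\lambda\to v_1$ in $L^{2^*}\hookrightarrow L^s$ controls $\int_{B_1}w_\lambda^s$; hence $\|w_\lambda\|_s^s\le C$, and combined with the lower bound, $\|w_\lambda\|_s^s\sim1$. To upgrade this to $w_\lambda\to v_1$ in $L^s(\mathbb R^N)$, fix $\varepsilon>0$: the fact that $v_1\in L^s(\mathbb R^N)$ (valid precisely because $s>\tfrac{N}{N-2}$), together with the uniform tail estimate above, gives $R$ with $\|v_1\|_{L^s(|x|\ge R)}^s+\sup_{\lambda}\|w_\lambda\|_{L^s(|x|\ge R)}^s<\varepsilon$, whereas $\|w_\lambda-v_1\|_{L^s(B_R)}\to0$ by~\eqref{e426} and $L^{2^*}(B_R)\hookrightarrow L^s(B_R)$ (here $s<2^*$ is used); letting $\varepsilon\to0$ finishes the argument.

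The main obstacle is exactly the uniform-in-$\lambda$ pointwise upper bound: one must verify that the constants produced by the Moser iteration and by the Green-function bootstrap do not degenerate as $V_\lambda\to0$ and $\xi_\lambda\to0$, which is where the nonconcentration of $w_\lambda^{2^*}\,dx$ (from~\eqref{e426}) and the two-sided bound $C_1\le\varpi_\lambda\le C_2$ enter in an essential way; everything else is routine bookkeeping with the rescalings.
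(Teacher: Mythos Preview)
Your proof is correct and follows essentially the same strategy as the paper: both rest on obtaining the uniform pointwise bound $w_\lambda(x)\le C(1+|x|)^{-(N-2)}$, from which the $L^s$-boundedness and the $L^s$-convergence follow (the paper invokes dominated convergence, you do an equivalent tail/core split). The only minor differences are cosmetic: for the lower bound the paper uses the $L^{2^*}$-convergence together with $L^{2^*}(B_1)\hookrightarrow L^s(B_1)$ rather than Lemma~\ref{l37}, and for the upper bound the paper simply cites \cite[Proposition~3.1]{Akahori-2} instead of sketching the Brezis--Kato/Moser iteration plus Green-function bootstrap as you do.
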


\begin{proof}
By \eqref{e426},  we have $w_\lambda\to v_1$ in $L^{2^*}(\mathbb R^N)$. Then, as in \cite[Lemma 4.6]{Moroz-1}, using the embeddings $L^{2^*}(B_1)\hookrightarrow L^s(B_1)$ we prove that
$\liminf_{\lambda\to \infty}\|w_\lambda\|_s^s>0$.  
 
 On the other hand,  
arguing as in \cite[Propositon 3.1]{Akahori-2}, we show that there exists a constant $C>0$ such that for all large $\lambda>0$, 
\begin{equation}\label{e429}
w_\lambda(x)\le \frac{C}{(1+|x|)^{N-2}}, \qquad \forall x\in \mathbb R^N,
\end{equation}
which together with the fact that $s>\frac{N}{N-2}$ implies that $w_\lambda$ is bounded in $L^s(\mathbb R^N)$  uniformly for large $\lambda>0$,  and 
 by the dominated convergence theorem $w_\lambda\to v_1$ in $L^s(\mathbb R^N)$ as $\lambda\to \infty$. 
\end{proof}

\begin{proof}[Proof of Theorem 2.1 for large $\lambda$]   
We first note that for a result similar to Lemma \ref{l24-3} holds for $w_\lambda$ and $\tilde J_\lambda$.  
By \eqref{e411}, we obtain
\begin{equation}\label{e430}
\begin{array}{cl}
m_\infty&\le \sup_{t\ge 0}J_\infty((w_\lambda)_t)=J_\infty((w_\lambda)_{t_\lambda})\\
&\le \sup_{t\ge 0} \tilde J_\lambda((w_\lambda)_t)+\lambda^{-\sigma}t_\lambda^{N}\left\{\frac{1}{q}\xi_\lambda^{\frac{2N-q(N-2)}{2}}\int_{\mathbb R^N}|w_\lambda|^q-\frac{1}{2}\xi_\lambda^{2}\int_{\mathbb R^N}|w_\lambda|^2\right\}\\
&\le m_\lambda+\lambda^{-\sigma}t_\lambda^{N}\frac{1}{q}\xi_\lambda^{\frac{2N-q(N-2)}{2}}\int_{\R^N}|w_\lambda|^q,
\end{array}
\end{equation}  
where $t_\lambda>0$ is such that $(w_\lambda)_{t_\lambda}\in \mathcal{N}_\infty,$ and more precisely, 
$$
t^2_\lambda=\left\{\begin{array}{lcl}
\frac{b(\int_{\R^N}|\nabla w_\lambda|^2)^2+\sqrt{b^2(\int_{\R^N}|\nabla w_\lambda|^2)^4+4a\int_{\R^N}|w_\lambda|^{2^*}\int_{\R^N}|\nabla w_\lambda|^2}}
{2\int_{\R^N}|w_\lambda|^{2^*}}, \ &{\rm if} &  N=3,\\
\frac{a\int_{\R^N}|\nabla w_\lambda|^2}{\int_{\R^N}|w_\lambda|^{2^*}-b(\int_{\R^N}|\nabla w_\lambda|^2)^2}, \ &{\rm if} & \  N=4,
\end{array}\right.
$$
and hence by \eqref{e48} and \eqref{e426}, as $\lambda\to \infty$, we have 
$$
t^2_\lambda\to \left\{\begin{array}{lcl}
\frac{b(\int_{\R^N}|\nabla v_1|^2)^2+\sqrt{b^2(\int_{\R^N}|\nabla v_1|^2)^4+4a\int_{\R^N}|v_1|^{2^*}\int_{\R^N}|\nabla v_1|^2}}
{2\int_{\R^N}|v_1|^{2^*}}, \ &{\rm if} &  N=3,\\
\frac{a\int_{\R^N}|\nabla v_1|^2}{\int_{\R^N}|v_1|^{2^*}-b(\int_{\R^N}|\nabla v_1|^2)^2}=\frac{a\int_{\R^N}|\nabla v_1|^2}{(1-bS^2)\int_{\R^N}|v_1|^{2^*}}, \ &{\rm if} & \  N=4.
\end{array}\right.
$$
The above inequality implies that
$$
\xi_\lambda^{\frac{2N-q(N-2)}{2}}t_\lambda^N\int_{\mathbb R^N}|w_\lambda|^q\ge \lambda^{\sigma}(m_\infty-m_\lambda).
$$
Hence, by Corollary \ref{c29}, we obtain
\begin{equation}\label{e431}
\xi_\lambda^{\frac{2N-q(N-2)}{2}}\int_{\mathbb R^N}|w_\lambda|^q\gtrsim 
\left\{\begin{array}{ll} 
 (\ln\lambda)^{-\frac{4-q}{q-2}}  &\text{if}\ N=4,\smallskip\\
 \lambda^{-\frac{(6-q)^2}{2(q-2)(q-4)}}   &\text{if}\  N=3.
 \end{array}\right.
\end{equation}  
Therefore, by Lemma \ref{l311}, we have 
\begin{equation}\label{e432}
\xi_\lambda\gtrsim  \left\{\begin{array}{ll} 
 (\ln\lambda)^{-\frac{1}{q-2}}&\text{if}\ N=4,\\
 \lambda^{-\frac{6-q}{(q-2)(q-4)}}&\text{if}\ N=3.
 \end{array}\right.
\end{equation}
On the other hand, if  $ N=3$, then by  \eqref{e427} and Lemma \ref{l38} and Lemma \ref{l311}, we have 
$$
\xi_\lambda^{\frac{q-2}{2}}\lesssim \frac{1}{\|w_\lambda\|_2^2}\lesssim\lambda^{-\frac{\sigma}{2}}\xi_\lambda.
$$
Then, observing that $\sigma=\frac{2^*-2}{q-2}=\frac{4}{q-2}$,  for $q\in (4,6)$ we obtain 
\begin{equation}\label{e433}
\xi_\lambda\lesssim \lambda^{-\frac{\sigma}{q-4}}= \lambda^{-\frac{6-q}{(q-2)(q-4)}}.
\end{equation}
If $N=4$, then by \eqref{e427} and Lemma \ref{l39} and Lemma \ref{l311},  we have 
$$
\xi_\lambda^{q-2}\lesssim \frac{1}{\|w_\lambda\|_2^2}\lesssim \frac{1}{-\ln(\lambda^{-\sigma}\xi_\lambda^{2})}\lesssim (\ln\lambda)^{-1}, 
$$
here we have used the fact that $\xi_\lambda\to 0$ as $\lambda\to +\infty$ and for large $\lambda>0$, 
$$
-\ln(\lambda^{-\sigma}\xi_\lambda^{2})=\sigma\ln{\lambda}-2\ln\xi_\lambda\ge \sigma\ln\lambda.
$$
Thus,  we obtain 
\begin{equation}\label{e434}
\xi_\lambda\lesssim   (\ln{\lambda})^{-\frac{1}{q-2}}.
\end{equation}
Thus, it follows from \eqref{e430}, \eqref{e433}, \eqref{e434} and Lemma \ref{l311} that 
$$
m_\infty-m_\lambda\lesssim \lambda^{-\sigma}\xi_\lambda^{\frac{2N-q(N-2)}{2}}\lesssim 
\left\{\begin{array}{ll} (\lambda\ln\lambda)^{-\frac{4-q}{q-2}}&\text{if}\ N=4,\\
 \lambda^{-\frac{6-q}{2(q-4)}}&\text{if}\ N=3,
 \end{array}\right.
$$
and hence  Corollary \ref{c29} yields that
\begin{equation}\label{e435}
m_\infty-m_\lambda\sim \lambda^{-\sigma}\xi_\lambda^{\frac{2N-q(N-2)}{2}}\sim 
\left\{\begin{array}{ll} (\lambda\ln\lambda)^{-\frac{4-q}{q-2}}&\text{if}\ N=4,\\
 \lambda^{-\frac{6-q}{2(q-4)}}&\text{if}\ N=3.
 \end{array}\right.
\end{equation}
Since
$$
\begin{array}{lcl}
m_\lambda&=&\frac{a}{2}\int_{\R^N}|\nabla w_\lambda|^2+\frac{1}{2}\lambda^{-\sigma}\xi_\lambda^2\int_{\R^N}|w_\lambda|^2+\frac{b}{4}\left(\int_{\R^N}|\nabla w_\lambda|^2\right)^2\\
&\mbox{}& \quad -\frac{1}{q}\lambda^{-\sigma}\xi_\lambda^{\frac{2N-q(N-2)}{2}}\int_{\R^N}|w_\lambda|^q-\frac{1}{2^*}\int_{\R^N}|w_\lambda|^{2^*}.
\end{array}
$$
$$
\begin{array}{cl}
&\frac{a}{2^*}\int_{\R^N}|\nabla w_\lambda|^2+\frac{1}{2}\lambda^{-\sigma}\xi_\lambda^2\int_{\R^N}|w_\lambda|^2+\frac{b}{2^*}\left(\int_{\R^N}|\nabla w_\lambda|^2\right)^2\\
&\quad =\frac{1}{q}\lambda^{-\sigma}\xi_\lambda^{\frac{2N-q(N-2)}{2}}\int_{\R^N}|w_\lambda|^q+\frac{1}{2^*}\int_{\R^N}|w_\lambda|^{2^*}
\end{array}
$$
we get
$$
m_\lambda=a(\frac{1}{2}-\frac{1}{2^*})\int_{\R^N}|\nabla w_\lambda|^2+b(\frac{1}{4}-\frac{1}{2^*})\left(\int_{\R^N}|\nabla w_\lambda|^2\right)^2.
$$
Similarly, we have 
$$
m_\infty=a(\frac{1}{2}-\frac{1}{2^*})\int_{\R^N}|\nabla w_\infty|^2+b(\frac{1}{4}-\frac{1}{2^*})\left(\int_{\R^N}|\nabla w_\infty|^2\right)^2.
$$
Therefore, we obtain
$$
m_\infty-m_\lambda=\left[\frac{a}{N}+b\frac{2^*-4}{2^*4}\left(\int_{\R^N}|\nabla w_\infty|^2+\int_{\R^N}|\nabla w_\lambda|^2\right)\right]\left(\int_{\R^N}|\nabla w_\infty|^2-\int_{\R^N}|\nabla w_\lambda|^2\right).
$$
which together with \eqref{e435} implies that 
\begin{equation}\label{e436}
\|\nabla w_\infty\|_2^2-\|\nabla w_\lambda\|_2^2\sim m_\infty-m_\lambda \sim 
\left\{\begin{array}{ll} (\lambda\ln\lambda)^{-\frac{4-q}{q-2}}  \  \ &\text{if}\ N=4,\\
 \lambda^{-\frac{6-q}{2(q-4)}}&\text{if}\  N=3.
 \end{array}\right.
 \end{equation}
Since
$$
\begin{array}{cl}
&\frac{1}{2^*}\left(\int_{\R^N}|w_\infty|^{2^*}-\int_{\R^N}|w_\lambda|^{2^*}\right)\\
&=-(m_\infty-m_\lambda)\\
&\mbox{}+\left[\frac{a}{2}+\frac{b}{4}\left(\int_{\R^N}|\nabla w_\infty|^2+\int_{\R^N}|\nabla w_\lambda|^2\right)\right]\left(\int_{\R^N}|\nabla w_\infty|^2-\int_{\R^N}|\nabla w_\lambda|^2\right)\\
&\mbox{} +\frac{(q-2)(N-2)}{4q}\lambda^{-\sigma}\xi_\lambda^{\frac{2N-q(N-2)}{2}}\int_{\R^N}|w_\lambda|^q.
\end{array}
$$
It follows from \eqref{e435} and  \eqref{e436} that 
$$
\|w_\infty\|^{2^*}_{2^*}-\|w_\lambda\|^{2^*}_{2^*}=\left\{\begin{array}{ll} O((\lambda\ln\lambda)^{-\frac{4-q}{q-2}})  \  \ &\text{if}\ N=4,\\
 O(\lambda^{-\frac{6-q}{2(q-4)}})&\text{if}\  N=3.
 \end{array}\right.
$$
Finally, by \eqref{e427}, \eqref{e435} and Lemma \ref{l311}, we obtain
$$
\|w_\lambda\|_2^2\sim 
\left\{\begin{array}{ll}
\ln\lambda  &\text{if}\ N=4,\\
\lambda^{\frac{6-q}{2(q-4)}}&\text{if}\  N=3.
\end{array}\right.
$$
Statements on $u_\lambda$ follow from the corresponding results on $v_\lambda$ and $w_\lambda$. This completes the proof of Theorem 2.1 for large $\lambda$.
\end{proof}

\section{Proof of Theorem \ref{t2}}\label{s5}

In this section, we aways assume $N=3$. We divide this section into three subsections, in which we consider the cases (i) $p=q$, (ii) $q<p$, $\lambda>0$  large, and (iii) $q<p$, $\lambda>0$ small, respectively.
If $2<q\le p<2^*$, then the arguments used in Section 3 and Section 4 do not work any more, so we try to transform the nonlocal equation $(P_\lambda)$ into the local equation \eqref{e13} by using a suitable rescaling which is dependent of   
the ground state solution. Unfortunately,  if $b\not=0$, such a rescaling does not  neceessarily transforms a ground state solution into  a ground state solution of  a new equation, which prevents us from deriving a precise energy estimate of the ground state.

\subsection{The case $p=q$}

Let $W\in H^1(\mathbb R^3)$ is the unique positive solution of the equation
$$
-\Delta W +W=W^{p-1},
$$
and $S_p=\|W\|_p^{p-2}$.  Let $u_\lambda$ be a ground state solution of $(P_\lambda)$, then for any $\lambda>0$, there holds
\begin{equation}\label{e51}
u_\lambda(x)=\left(\lambda/2\right)^{\frac{1}{p-2}}W(\lambda^{\frac{1}{2}}\varpi_\lambda^{-\frac{1}{2}}x), 
\end{equation}
where $\varpi_\lambda=a$ if $b=0$, and if $b\not=0$, then
$$
\begin{array}{lcl}
\sqrt{\varpi_\lambda}&=&\frac{1}{2}\left\{\frac{3b(p-2)}{2p}\lambda^{\frac{6-p}{2(p-2)}}(S_p/2)^{\frac{p}{p-2}}+\sqrt{\frac{9b^2(p-2)^2}{4p^2}\lambda^{\frac{6-p}{p-2}}(S_p/2)^{\frac{2p}{p-2}}+4a}   \right\}\\
&=&\left\{\begin{array}{lcl}\lambda^{\frac{6-p}{2(p-2)}}\left(\frac{3b(p-2)}{2p}(S_p/2)^{\frac{p}{p-2}}+\Theta(\lambda^{-\frac{6-p}{p-2}})\right),\   &{\rm as}& \lambda\to \infty,\\
a^{\frac{1}{2}}+\Theta(\lambda^{\frac{6-p}{2(p-2)}}), \  &{\rm as}&    \lambda\to 0,\end{array}\right.
\end{array}
$$
Clearly, we have 
$$
u_\lambda(0)= \left(\lambda/2\right)^{\frac{1}{p-2}}W(0),
$$
and a  direct computation shows that for $b\not=0$,
$$
\begin{array}{lcl}
\|\nabla u_\lambda\|^2_2&=&\lambda^{\frac{6-p}{2(p-2)}}\frac{3(p-2)}{p}\sqrt{\varpi_\lambda}\left(S_p/2\right)^{\frac{p}{p-2}}\\
&=&\left\{\begin{array}{lcl}\lambda^{\frac{6-p}{p-2}}\left(\frac{9b(p-2)^2}{2p^2}(S_p/2)^{\frac{2p}{p-2}}+\Theta(\lambda^{-\frac{6-p}{p-2}})\right),\   &{\rm as}& \lambda\to \infty,\\
\lambda^{\frac{6-p}{2(p-2)}}\left(\frac{3(p-2)}{p}a^{\frac{1}{2}}(S_p/2)^{\frac{p}{p-2}}+\Theta(\lambda^{\frac{6-p}{2(p-2)}})\right), \  &{\rm as}&    \lambda\to 0,\end{array}\right.
\end{array}
$$
$$
\begin{array}{lcl}
\| u_\lambda\|^2_2&=&\lambda^{\frac{10-3p}{2(p-2)}}\frac{6-p}{p}(\sqrt{\varpi_\lambda})^3\left(S_p/2\right)^{\frac{p}{p-2}}\\
&=&\left\{\begin{array}{lcl}\lambda^{\frac{14-3p}{p-2}}\left(\frac{27b^3(p-2)^3(6-p)}{8p^4}(S_p/2)^{\frac{4p}{p-2}}+\Theta(\lambda^{-\frac{6-p}{p-2}})\right),\   &{\rm as}& \lambda\to \infty,\\
\lambda^{\frac{10-3p}{2(p-2)}}\left(\frac{6-p}{p}a^{\frac{3}{2}}(S_p/2)^{\frac{p}{p-2}}+\Theta(\lambda^{\frac{6-p}{2(p-2)}})\right), \  &{\rm as}&    \lambda\to 0,\end{array}\right.
\end{array}
$$
$$
\begin{array}{lcl}
\|u_\lambda\|_p^p&=&\lambda^{\frac{6-p}{2(p-2)}}(\sqrt{\varpi_\lambda})^3\left(S_p/2\right)^{\frac{p}{p-2}}\\
&=&\left\{\begin{array}{lcl}\lambda^{\frac{2(6-p)}{p-2}}\left(\frac{27b^3(p-2)^3}{8p^3}(S_p/2)^{\frac{4p}{p-2}}+\Theta(\lambda^{-\frac{6-p}{p-2}})\right),\   &{\rm as}& \lambda\to \infty,\\
\lambda^{\frac{6-p}{2(p-2)}}\left(a^{\frac{3}{2}}(S_p/2)^{\frac{p}{p-2}}+\Theta(\lambda^{\frac{6-p}{2(p-2)}})\right), \  &{\rm as}&    \lambda\to 0. \end{array}\right.
\end{array}
$$
For $b=0$, we have 
$$
\|\nabla u_\lambda\|^2_2=\lambda^{\frac{6-p}{2(p-2)}}\frac{3(p-2)}{p}\sqrt{\varpi_\lambda}\left(S_p/2\right)^{\frac{p}{p-2}}
=\lambda^{\frac{6-p}{2(p-2)}}\frac{3(p-2)}{p}a^{\frac{1}{2}}\left(S_p/2\right)^{\frac{p}{p-2}},
$$
$$
\| u_\lambda\|^2_2=\lambda^{\frac{10-3p}{2(p-2)}}\frac{6-p}{p}(\sqrt{\varpi_\lambda})^3\left(S_p/2\right)^{\frac{p}{p-2}}
=\lambda^{\frac{10-3p}{2(p-2)}}\frac{6-p}{p}a^{\frac{3}{2}}\left(S_p/2\right)^{\frac{p}{p-2}},
$$
$$
\|u_\lambda\|_p^p=\lambda^{\frac{6-p}{2(p-2)}}(\sqrt{\varpi_\lambda})^3\left(S_p/2\right)^{\frac{p}{p-2}}
=\lambda^{\frac{6-p}{2(p-2)}}a^{\frac{3}{2}}\left(S_p/2\right)^{\frac{p}{p-2}}.
$$

\subsection{The case $q<p$ and $\lambda>0$ is sufficiently large.}
 Let  $u_\lambda$ be a ground state solution of $(P_\lambda)$, and 
\begin{equation}\label{e52}
w_\lambda(x)=\lambda^{-\frac{1}{p-2}}u_\lambda(\lambda^{-\frac{1}{2}}\sqrt{\varpi_\lambda}x),
\  \  \varpi_\lambda=a+b\int_{\R^N}|\nabla u_\lambda|^2.
\end{equation}
Then $w=w_\lambda$ satisies
\begin{equation}\label{e53}
-\Delta w+w=\lambda^{-\frac{p-q}{p-2}}w^{q-1}+w^{p-1},  \   \  {\rm in}  \ \R^N.
\end{equation}
The  corresponding functional is given by
$$
J_\lambda(w)=\frac{1}{2}\int_{\R^N}|\nabla w|^2+|w|^2-\frac{1}{q}\lambda^{-\frac{p-q}{p-2}}\int_{\R^N}|w|^q+\frac{1}{p}\int_{\R^N}|w|^p.
$$
Observe that 
\begin{equation}\label{e54}
\varpi_\lambda=\left\{\begin{array}{lcl}
a+b\lambda^{\frac{6-p}{2(p-2)}}\varpi_\lambda^{\frac{1}{2}}\int_{\R^N}|\nabla w_\lambda|^2,\quad &{\rm if}& \ b\not=0,\\
a, \quad &{\rm if}& \ b=0,
\end{array}\right.
\end{equation}
it follows that 
\begin{equation}\label{e55}
\begin{array}{lcl}
I_\lambda(u)&=&\frac{a}{2}\|\nabla u\|_2^2+\frac{\lambda}{2}\|u\|_2^2+\frac{b}{4}\|\nabla u\|_2^4-\frac{1}{q}\|u\|_q^q-\frac{1}{p}\|u\|_p^p\\
&=&\left\{\begin{array}{lcl}
\lambda^{\frac{6-p}{2(p-2)}}(\sqrt{\varpi_\lambda})^3[J_\lambda(w)+K_\lambda(w)],  \quad &{\rm if}& \  b\not=0,\\
\lambda^{\frac{6-p}{2(p-2)}}a^{\frac{3}{2}}J_\lambda(w), \quad &{\rm if}& \   b=0.
\end{array}\right.
\end{array}
\end{equation}
where 
$$
K_\lambda(w)=\frac{a}{2}\varpi_\lambda^{-1}\|\nabla w\|_2^2+\frac{b}{4}\lambda^{\frac{6-p}{2(p-2)}}\varpi_\lambda^{-\frac{1}{2}}\|\nabla w\|_2^4-\frac{1}{2}\|\nabla w\|_2^2.
$$
Clearly, for any $\varphi\in H^1(\R^N)$, we have 
$$
K'_\lambda(w_\lambda)\varphi=(a\varpi_\lambda^{-1}+b\lambda^{\frac{6-p}{2(p-2)}}\varpi_{\lambda}^{-\frac{1}{2}}\|\nabla w_\lambda\|_2^2-1)\int_{\R^N}\nabla w_\lambda\nabla \varphi
=0,
$$
therefore, $w=w_\lambda$ is a critical point of $K_\lambda(w)$. Therefore, if $u_\lambda$ is a critical point of $I_\lambda$, then 
$w_\lambda$ is a critical point of $J_\lambda$.

On the other hand, assume $w_\lambda$ is a critical point of $J_\lambda$ and $\varpi_\lambda$ is given by \eqref{e54},
then 
\begin{equation}\label{e56}
u_\lambda(x)=\lambda^{\frac{1}{p-2}}w_\lambda(\lambda^{\frac{1}{2}}(\sqrt{\varpi_\lambda})^{-1}x)
\end{equation}
is a critical point of $I_\lambda$.  These observation reduces the problem of finding critical point of $I_\lambda$ to the corresponding problem of finding critical point of $J_\lambda$.

For general $\lambda>0$, the ground state solutions of \eqref{e53} should not be unique. But for large $\lambda>0$, this is not the case.  Arguing as in \cite[Theorem 5.1]{Jeanjean-5} (see also \cite{Akahori-3}), by the implicit function theorem, we can show that  for large $\lambda>0$, \eqref{e53} admits a unique  positive ground state solution, therefore, $(P_\lambda)$ has only one ground state solution for large $\lambda>0$. This also yields that $w_\lambda$ is a ground state solution of \eqref{e53}.

 Let $m_\lambda$ be the least energy of nontrivial solutions of \eqref{e53}. Put
 \begin{equation}\label{e57}
A_\lambda=\int_{\R^N}|\nabla w_\lambda|^2, \  B_\lambda=\int_{\R^N}|w_\lambda|^2, \  C_\lambda=\int_{\R^N}|w_\lambda|^q, \ D_\lambda=\int_{\R^N}|w_\lambda|^p,
\end{equation}
Then the Nehari and Poho\v zaev identities hold true:
\begin{equation}\label{e58}
A_\lambda+B_\lambda=\lambda^{-\frac{p-q}{p-2}}C_\lambda+D_\lambda,
\end{equation}
\begin{equation}\label{e59}
\frac{1}{2^*}A_\lambda+\frac{1}{2}B_\lambda=\frac{1}{q}\lambda^{-\frac{p-q}{p-2}}C_\lambda+\frac{1}{p}D_\lambda,
\end{equation}
As a consequence, it follows that
$$
A_\lambda=\frac{N(q-2)}{2q}\lambda^{-\frac{p-q}{p-2}}C_\lambda+\frac{N(p-2)}{2p}D_\lambda
$$
$$
B_\lambda=\frac{N(2^*-q)}{2^*q}\lambda^{-\frac{p-q}{p-2}}C_\lambda+\frac{N(2^*-p)}{2^*p}D_\lambda
$$
Hence, we get
$$
\begin{array}{rcl}
m_\lambda&=&\frac{1}{2}A_\lambda+\frac{1}{2}B_\lambda-\frac{1}{q}\lambda^{-\frac{p-q}{p-2}}C_\lambda-\frac{1}{p}D_\lambda\\
&=&\frac{N(q-2)}{4q}\lambda^{-\frac{p-q}{p-2}}C_\lambda+\frac{N(p-2)}{4p}D_\lambda
+\frac{N(2^*-q)}{22^*q}\lambda^{-\frac{p-q}{p-2}}C_\lambda+\frac{N(2^*-p)}{22^*p}D_\lambda
-\frac{1}{q}\lambda^{-\frac{p-q}{p-2}}C_\lambda-\frac{1}{p}D_\lambda\\
&=&\frac{q-2}{2}\lambda^{-\frac{p-q}{p-2}}C_\lambda+\frac{p-2}{2}D_\lambda.
\end{array}
$$
In a similar way, we show that
$$
m_\infty=\frac{p-2}{2}D_\infty=\frac{p-2}{2}S_p^{\frac{p}{p-2}}.
$$
Thus, we get
\begin{equation}\label{e510}
m_\lambda-m_\infty=\frac{p-2}{2}(D_\lambda-D_\infty)+\frac{q-2}{2}\lambda^{-\frac{p-q}{p-2}}C_\lambda.
\end{equation}
Arguing as in \cite{MM-1}, it is shown that
\begin{equation}\label{e511}
m_0-m_\lambda\sim \lambda^{-\frac{p-q}{p-2}},  \quad {\rm as} \  \lambda\to \infty.
\end{equation}
Therefore, from \eqref{e511},  we obtain
\begin{equation}\label{e512}
\int_{\R^N}|w_\lambda|^p=D_\infty-\frac{2}{p-2}(m_0-m_\lambda)-\frac{q-2}{p-2}\lambda^{-\frac{p-q}{p-2}}C_\lambda
=S_p^{\frac{p}{p-2}}-\Theta(\lambda^{-\frac{p-q}{p-2}}),
\end{equation}
\begin{equation}\label{e513}
\begin{array}{lcl}
\int_{\R^N}|\nabla w_\lambda|^2&=&\frac{N(q-2)}{2q}\lambda^{-\frac{p-q}{p-2}}C_\lambda+\frac{N(p-2)}{2p}(D_\infty-\frac{2}{p-2}(m_0-m_\lambda)-\frac{q-2}{p-2}\lambda^{-\frac{p-q}{p-2}}C_\lambda)\\
&=&\frac{N(p-2)}{2p}D_\infty-\frac{N}{p}(m_\infty-m_\lambda)-\frac{N(q-2)}{2}(\frac{1}{p}-\frac{1}{q})\lambda^{-\frac{p-q}{p-2}}C_\lambda\\
&=& \frac{N(p-2)}{2p}S_p^{\frac{p}{p-2}}+O(\lambda^{-\frac{p-q}{p-2}}),
\end{array}
\end{equation}
\begin{equation}\label{e514}
\begin{array}{rcl}
\int_{\R^N}|w_\lambda|^2&=&\frac{N(2^*-q)}{2^*q}\lambda^{-\frac{p-q}{p-2}}C_\lambda+\frac{N(2^*-p)}{2^*p}(D_\infty-\frac{2}{p-2}(m_\infty-m_\lambda)-\frac{q-2}{p-2}\lambda^{-\frac{p-q}{p-2}}C_\lambda)\\
&=&\frac{N(2^*-p)}{2^*p}S_p^{\frac{p}{p-2}}-\frac{2N(2^*-p)}{2^*p(p-2)}(m_\infty-m_\lambda)-N(\frac{(2^*-p)(q-2)}{2^*p(p-2)}-\frac{2^*-q}{2^*q})\lambda^{-\frac{p-q}{p-2}}C_\lambda\\
&=&\frac{N(2^*-p)}{2^*p}S_p^{\frac{p}{p-2}}+O(\lambda^{-\frac{p-q}{p-2}}).
\end{array}
\end{equation}
From \eqref{e54} and \eqref{e514}, it follows that for $b\not=0$, 
$$
\begin{array}{lcl}
\sqrt{\varpi_\lambda}&=&\frac{1}{2}\left(b\lambda^{\frac{6-p}{2(p-2)}}\int_{\R^N}|\nabla w_\lambda|^2+\sqrt{b^2\lambda^{\frac{6-p}{p-2}}(\int_{\R^N}|\nabla w_\lambda|^2)^2+4a}\right)\\
&=&\lambda^{\frac{6-p}{2(p-2)}}\int_{\R^N}|\nabla w_\lambda|^2\cdot \frac{1}{2}\left(b+\sqrt{b^2+4a\lambda^{-\frac{6-p}{p-2}}(\int_{\R^N}|\nabla w_\lambda|^2)^{-2}}\right)\\
&=&\lambda^{\frac{6-q}{2(p-2)}}\int_{\R^N}|\nabla w_\lambda|^2\left(b+O(\lambda^{-\frac{6-p}{p-2}})\right)\\
&=&\lambda^{\frac{6-p}{2(p-2)}}\left(\frac{3(p-2)}{2p}S_p^{\frac{p}{p-2}}+O(\lambda^{-\frac{p-q}{p-2}})\right)\left(b+O(\lambda^{-\frac{6-p}{p-2}})\right)\\
&=&\left\{\begin{array}{lcl}
\lambda^{\frac{6-p}{2(p-2)}}\left(\frac{3b(p-2)}{2p}S_p^{\frac{p}{p-2}}+O(\lambda^{-\frac{p-q}{p-2}})\right),   \   \ &{\rm if}&  \  q>2p-6,\\
\lambda^{\frac{6-p}{2(p-2)}}\left(\frac{3b(p-2)}{2p}S_p^{\frac{p}{p-2}}+O(\lambda^{-\frac{6-p}{p-2}})\right),   \   \ &{\rm if}&  \  q\le 2p-6. \end{array} \right.
\end{array}
$$
Thus, by \eqref{e56}, \eqref{e512}, \eqref{e513} and \eqref{e514},  for $b\not=0$, we obtain
$$
\begin{array}{lcl}
\|\nabla u_\lambda\|_2^2&=&\lambda^{\frac{6-p}{2(p-2)}}\sqrt{\varpi_\lambda}\int_{\R^N}|\nabla w_\lambda|^2\\
&=&\lambda^{\frac{6-p}{p-2}}(b+O(\lambda^{-\frac{6-q}{p-2}}))(\int_{\R^N}|\nabla w_\lambda|^2)^2\\
&=&\lambda^{\frac{6-p}{p-2}}(b+O(\lambda^{-\frac{6-q}{p-2}}))(\frac{3(p-2)}{2p}S_p^{\frac{p}{p-2}}+O(\lambda^{-\frac{p-q}{p-2}}))^2\\
&=&\left\{\begin{array}{lcl}
\lambda^{\frac{6-p}{p-2}}\left(\frac{9b(p-2)^2}{4p^2}S_p^{\frac{2p}{p-2}}+O(\lambda^{-\frac{p-q}{p-2}})\right),   \   \ &{\rm if}&  \  q>2p-6,\\
\lambda^{\frac{6-p}{p-2}}\left(\frac{9b(p-2)^2}{4p^2}S_p^{\frac{2p}{p-2}}+O(\lambda^{-\frac{6-p}{p-2}})\right),   \   \ &{\rm if}&  \  q\le 2p-6,  \end{array} \right.
\end{array}
$$
$$
\begin{array}{lcl}
\| u_\lambda\|^2_2&=&\lambda^{\frac{10-3p}{2(p-2)}}(\sqrt{\varpi_\lambda})^3\int_{\R^N}|w_\lambda|^2\\
&=&\lambda^{\frac{10-3p}{2(p-2)}}\cdot \lambda^{\frac{3(6-q)}{2(p-2)}}(b+O(\lambda^{-\frac{6-q}{p-2}}))^3(\int_{\R^N}|\nabla w_\lambda|^2)^3\int_{\R^N}|w_\lambda|^2\\
&=&\lambda^{\frac{14-3p}{p-2}}(b^3+O(\lambda^{-\frac{6-q}{p-2}}))(\frac{3(p-2)}{2p}S_p^{\frac{p}{p-2}}+O(\lambda^{-\frac{p-q}{p-2}}))^3(\frac{6-p}{2p}S_p^{\frac{p}{p-2}}+O(\lambda^{-\frac{p-q}{p-2}}))\\
&=&\left\{\begin{array}{lcl}
\lambda^{\frac{14-3p}{p-2}}\left(\frac{27b^3(p-2)^3(6-p)}{16p^4}S_p^{\frac{4p}{p-2}}+O(\lambda^{-\frac{p-q}{p-2}})\right),   \   \ &{\rm if}&  \  q>2p-6,\\
\lambda^{\frac{14-3p}{p-2}}\left(\frac{27b^3(p-2)^3(6-p)}{16p^4}S_p^{\frac{4p}{p-2}}+O(\lambda^{-\frac{6-p}{p-2}})\right),   \   \ &{\rm if}&  \  q\le 2p-6, \end{array} \right.
\end{array}
$$
$$
\begin{array}{lcl}
\| u_\lambda\|^p_p&=&\lambda^{\frac{6-p}{2(p-2)}}(\sqrt{\varpi_\lambda})^3\int_{\R^N}|w_\lambda|^p\\
&=&\lambda^{\frac{6-p}{2(p-2)}}\cdot \lambda^{\frac{3(6-q)}{2(p-2)}}(b+O(\lambda^{-\frac{6-q}{p-2}}))^3(\int_{\R^N}|\nabla w_\lambda|^2)^3\int_{\R^N}|w_\lambda|^p\\
&=&\lambda^{\frac{2(6-p)}{p-2}}(b^3+O(\lambda^{-\frac{6-q}{p-2}}))(\frac{3(p-2)}{2p}S_p^{\frac{p}{p-2}}+O(\lambda^{-\frac{p-q}{p-2}}))^3(S_p^{\frac{p}{p-2}}-\Theta(\lambda^{-\frac{p-q}{p-2}}))\\
&=&\left\{\begin{array}{lcl}
\lambda^{\frac{2(6-p)}{p-2}}\left(\frac{27b^3(p-2)^3}{8p^3}S_p^{\frac{4p}{p-2}}+O(\lambda^{-\frac{p-q}{p-2}})\right),   \   \ &{\rm if}&  \  q>2p-6,\\
\lambda^{\frac{2(6-p)}{p-2}}\left(\frac{27b^3(p-2)^3}{8p^3}S_p^{\frac{4p}{p-2}}+O(\lambda^{-\frac{6-p}{p-2}})\right),   \   \ &{\rm if}&  \  q\le 2p-6. \end{array} \right.
\end{array}
$$
For $b=0$, noting that $\varpi_\lambda=a$, by \eqref{e56}, \eqref{e512}, \eqref{e513} and \eqref{e514}, we have
$$
\|\nabla u_\lambda\|_2^2=\lambda^{\frac{6-p}{2(p-2)}}\sqrt{\varpi_\lambda}\int_{\R^N}|\nabla w_\lambda|^2
=\lambda^{\frac{6-p}{2(p-2)}}\left(\frac{3(p-2)}{2p}a^{\frac{1}{2}} S_p^{\frac{p}{p-2}}+O(\lambda^{-\frac{p-q}{p-2}})\right),
$$
$$
\| u_\lambda\|^2_2=\lambda^{\frac{10-3p}{2(p-2)}}(\sqrt{\varpi_\lambda})^3\int_{\R^N}|w_\lambda|^2
=  \lambda^{\frac{10-3p}{2(p-2)}}\left( \frac{6-p}{2p}a^{\frac{3}{2}}S_p^{\frac{p}{p-2}}+O(\lambda^{-\frac{p-q}{p-2}})\right),
$$
$$
\| u_\lambda\|^p_p=\lambda^{\frac{6-p}{2(p-2)}}(\sqrt{\varpi_\lambda})^3\int_{\R^N}|w_\lambda|^p
=\lambda^{\frac{6-p}{2(p-2)}}\left(a^{\frac{3}{2}}S_p^{\frac{p}{p-2}}-\Theta(\lambda^{-\frac{p-q}{p-2}})\right).
$$

\subsection{The case $q<p$ and $\lambda>0$ is sufficiently small.}

Let $u_\lambda$ be a ground state solution of $(P_\lambda)$, and 
\begin{equation}\label{e515}
w_\lambda(x)=\lambda^{-\frac{1}{q-2}}u_\lambda(\lambda^{-\frac{1}{2}}\sqrt{\varpi_\lambda}x),
\  \  \varpi_\lambda=a+b\int_{\R^N}|\nabla u_\lambda|^2.
\end{equation}
Then $w=w_\lambda$ satisies
\begin{equation}\label{e516}
-\Delta w+w=w^{q-1}+\lambda^{\frac{p-q}{q-2}}w^{p-1},  \   \  {\rm in}  \ \R^N.
\end{equation}
As before, we show that  $w_\lambda$ is the unique positive solution of \eqref{e516} for small $\lambda>0$. Moreover, we have 
\begin{equation}\label{e517}
\int_{\R^N}|\nabla w_\lambda|^2+\int_{\R^N}|w_\lambda|^2=\int_{\R^N}|w_\lambda|^q+\lambda^{\frac{p-q}{q-2}}\int_{\R^N}|w_\lambda|^p,
\end{equation}
\begin{equation}\label{e518}
\frac{1}{2^*}\int_{\R^N}|\nabla w_\lambda|^2+\frac{1}{2}\int_{\R^N}|w_\lambda|^2=\frac{1}{q}\int_{\R^N}|w_\lambda|^q+\frac{1}{p}\lambda^{\frac{p-q}{q-2}}\int_{\R^N}|w_\lambda|^p,
\end{equation}
Put 
\begin{equation}\label{e519}
A_\lambda=\int_{\R^N}|\nabla w_\lambda|^2, \  B_\lambda=\int_{\R^N}|w_\lambda|^2, \  C_\lambda=\int_{\R^N}|w_\lambda|^q, \ D_\lambda=\int_{\R^N}|w_\lambda|^p,
\end{equation}
then
$$
A_\lambda=\frac{N(q-2)}{2q}C_\lambda+\frac{N(p-2)}{2p}\lambda^{\frac{p-q}{q-2}}D_\lambda,
$$
$$
B_\lambda=\frac{N(2^*-q)}{2^*q}C_\lambda+\frac{N(2^*-p)}{2^*p}\lambda^{\frac{p-q}{q-2}}D_\lambda,
$$
and hence
\begin{equation}\label{e520}
\begin{array}{lcl}
m_\lambda&=&\frac{1}{2}\int_{\R^N}|\nabla w_\lambda|^2+\frac{1}{2}\int_{\R^N}|w_\lambda|^2-\frac{1}{q}\int_{\R^N}|w_\lambda|^q-\frac{1}{p}\lambda^{\frac{p-q}{q-2}}\int_{\R^N}|w_\lambda|^p\\
&=&\frac{q-2}{2}C_\lambda+\frac{p-2}{2}\lambda^{\frac{p-q}{q-2}}D_\lambda.
\end{array}
\end{equation}
In a similar way, we show that
$$
m_0=\frac{q-2}{2}C_0=\frac{q-2}{2}S_q^{\frac{q}{q-2}}, \   \  C_0=S_q^{\frac{q}{q-2}}.
$$
Thus, we obtain
\begin{equation}\label{e521}
m_\lambda-m_0=\frac{q-2}{2}(C_\lambda-C_0)+\frac{p-2}{2}\lambda^{\frac{p-q}{q-2}}D_\lambda.
\end{equation}
On the other hand, as before, it is easy to show that 
\begin{equation}\label{e522}
m_0-m_\lambda\sim \lambda^{\frac{p-q}{q-2}}, \quad {\rm as}  \   \lambda\to 0.
\end{equation}
Therefore, we obtain 
$$
C_0-C_\lambda=\frac{2}{q-2}(m_0-m_\lambda)+\frac{p-2}{q-2}\lambda^{\frac{p-q}{q-2}}D_\lambda\sim \lambda^{\frac{p-q}{q-2}},
$$
that is, 
\begin{equation}\label{e523}
\int_{\R^N}|w_\lambda|^q=C_\lambda=C_0-\Theta(\lambda^{\frac{p-q}{q-2}})=S_q^{\frac{q}{q-2}}-\Theta(\lambda^{\frac{p-q}{q-2}}).
\end{equation}
Therefore, we have 
\begin{equation}\label{e524}
\begin{array}{lcl}
\int_{\R^N}|\nabla w_\lambda|^2&=&\frac{N(q-2)}{2q}(C_0-\frac{2}{q-2}(m_0-m_\lambda)-\frac{p-2}{q-2}\lambda^{\frac{p-q}{q-2}}D_\lambda)+\frac{N(p-2)}{2p}\lambda^{\frac{p-q}{q-2}}D_\lambda\\
&=&\frac{N(q-2)}{2q}C_0-\frac{N}{q}(m_0-m_\lambda)-\frac{N(p-2)}{2}(\frac{1}{q}-\frac{1}{p})\lambda^{\frac{p-q}{q-2}}D_\lambda\\
&=&\frac{N(q-2)}{2q}S_q^{\frac{q}{q-2}}-\Theta(\lambda^{\frac{p-q}{q-2}}).
\end{array}
\end{equation}
$$
\begin{array}{lcl}
\int_{\R^N}|w_\lambda|^2&=&\frac{N(2^*-q)}{2^*q}(C_0-\frac{2}{q-2}(m_0-m_\lambda)-\frac{p-2}{q-2}\lambda^{\frac{p-q}{q-2}}D_\lambda)+\frac{N(2^*-p)}{2^*p}\lambda^{\frac{p-q}{q-2}}D_\lambda\\
&=&\frac{N(2^*-q)}{2^*q}S_q^{\frac{q}{q-2}}-\frac{2N(2^*-q)}{2^*q(q-2)}(m_0-m_\lambda)-\left[\frac{N(2^*-q)(p-2)}{2^*q(q-2)}-\frac{N(2^*-p)}{2^*p}\right]\lambda^{\frac{p-q}{q-2}}D_\lambda.
\end{array}
$$
Notice that 
$$
\frac{(2^*-q)(p-2)}{2^*q(q-2)}-\frac{(2^*-p)}{2^*p}=\frac{p-2}{q-2}\left(\frac{1}{q}-\frac{1}{2^*}\right)-\left(\frac{1}{p}-\frac{1}{2^*}\right)\ge \frac{p-q}{q-2}\left(\frac{1}{p}-\frac{1}{2^*}\right)>0,
$$
we conclude that
\begin{equation}\label{e525}
\int_{\R^N}|w_\lambda|^2=\frac{N(2^*-q)}{2^*q}S_q^{\frac{q}{q-2}}-\Theta(\lambda^{\frac{p-q}{q-2}}).
\end{equation}
Since
$$
\varpi_\lambda=a+b\int_{\R^N}|\nabla u_\lambda|^2
=\left\{\begin{array}{lcl}
a+b\lambda^{\frac{6-q}{2(q-2)}}\sqrt{\varpi_\lambda}\int_{\R^N}|\nabla w_\lambda|^2,\quad &{\rm if}& \ b\not=0,\\
a, \quad &{\rm if}& \  b=0,
\end{array}\right.
$$
by \eqref{e524},  for $b\not=0$, we have 
$$
\begin{array}{rcl}
\sqrt{\varpi_\lambda}&=&
\frac{1}{2}\left(b\lambda^{\frac{6-q}{2(q-2)}}\int_{\R^N}|\nabla w_\lambda|^2+\sqrt{b^2\lambda^{\frac{6-q}{q-2}}(\int_{\R^N}|\nabla w_\lambda|^2)^2+4a}\right)\\
&=&\frac{1}{2}\left(b\lambda^{\frac{6-q}{2(q-2)}}
+\sqrt{b^2\lambda^{\frac{6-q}{q-2}}
+4a(\int_{\R^N}|\nabla w_\lambda|^2)^{-2}}\right)\int_{\R^N}|\nabla w_\lambda|^2\\
&=&a^{\frac{1}{2}}+O(\lambda^{\frac{6-q}{2(q-2)}}).
\end{array}
$$
Thus, it follows from \eqref{e515}, \eqref{e523}, \eqref{e524} and \eqref{e525}  that  for $b\not=0$,
$$
\begin{array}{lcl}
\|\nabla u_\lambda\|_2^2&=&\lambda^{\frac{2N-q(N-2)}{2(q-2)}}(\sqrt{\varpi_\lambda})^{N-2}\int_{\R^N}|\nabla w_\lambda|^2\\
&=&\lambda^{\frac{6-q}{2(q-2)}}\sqrt{\varpi_\lambda}\int_{\R^N}|\nabla w_\lambda|^2\\
&=&
\left\{\begin{array}{lcl}
\lambda^{\frac{6-q}{2(q-2)}}\left(\frac{3(q-2)}{2q}a^{\frac{1}{2}}S_q^{\frac{q}{q-2}}-\Theta(\lambda^{\frac{p-q}{q-2}})\right),   \   \ &{\rm if}&  \  q>2p-6,\\
\lambda^{\frac{6-q}{2(q-2)}}\left(\frac{3(q-2)}{2q}a^{\frac{1}{2}}S_q^{\frac{q}{q-2}}+O(\lambda^{\frac{6-q}{2(q-2)}})\right),   \   \ &{\rm if}&  \  q\le 2p-6, \end{array} \right.
\end{array}
$$
$$
\begin{array}{rcl}
\|u_\lambda\|_2^2&=&\lambda^{\frac{4-N(q-2)}{2(q-2)}}(\sqrt{\varpi_\lambda})^N\int_{\R^N}|w_\lambda|^2\\
&=&\left\{\begin{array}{lcl}
\lambda^{\frac{10-3q}{2(q-2)}}\left(\frac{6-q}{2q}a^{\frac{3}{2}}S_q^{\frac{q}{q-2}}-\Theta(\lambda^{\frac{p-q}{q-2}})\right),   \   \ &{\rm if}&  \  q>2p-6,\\
\lambda^{\frac{10-3q}{2(q-2)}}\left(\frac{6-q}{2q}a^{\frac{3}{2}}S_q^{\frac{q}{q-2}}+O(\lambda^{\frac{6-q}{2(q-2)}})\right),   \   \ &{\rm if}&  \  q\le 2p-6, \end{array} \right.
\end{array}
$$
$$
\begin{array}{rcl}
\|u_\lambda\|_q^q&=&\lambda^{\frac{2N-q(N-2)}{2(q-2)}}(\sqrt{\varpi_\lambda})^{N}\int_{\R^N}|w_\lambda|^q\\
&=&\left\{\begin{array}{lcl}
\lambda^{\frac{6-q}{2(q-2)}}\left(a^{\frac{3}{2}}S_q^{\frac{q}{q-2}}-\Theta(\lambda^{\frac{p-q}{q-2}})\right),   \   \ &{\rm if}&  \  q>2p-6,\\
\lambda^{\frac{6-q}{2(q-2)}}\left(a^{\frac{3}{2}}S_q^{\frac{q}{q-2}}+O(\lambda^{\frac{6-q}{2(q-2)}})\right),   \   \ &{\rm if}&  \  q\le 2p-6. \end{array} \right.
\end{array}
$$
For $b=0$, by \eqref{e523}, \eqref{e524} and \eqref{e525}, we have 
$$
\|\nabla u_\lambda\|_2^2=\lambda^{\frac{6-q}{2(q-2)}}\sqrt{\varpi_\lambda}\int_{\R^N}|\nabla w_\lambda|^2
=\lambda^{\frac{6-q}{2(q-2)}}\left(\frac{3(q-2)}{2q}a^{\frac{1}{2}}S_q^{\frac{q}{q-2}}-\Theta(\lambda^{\frac{p-q}{q-2}})\right),
$$
$$
\|u_\lambda\|_2^2=\lambda^{\frac{10-3q}{2(q-2)}}(\sqrt{\varpi_\lambda})^3\int_{\R^N}|w_\lambda|^2
=\lambda^{\frac{10-3q}{2(q-2)}}\left(\frac{6-q}{2q}a^{\frac{3}{2}}S_q^{\frac{q}{q-2}}-\Theta(\lambda^{\frac{p-q}{q-2}})\right),
$$
$$
\|u_\lambda\|_q^q=\lambda^{\frac{6-q}{2(q-2)}}(\sqrt{\varpi_\lambda})^{3}\int_{\R^N}|w_\lambda|^q
=\lambda^{\frac{6-q}{2(q-2)}}\left(a^{\frac{3}{2}}S_q^{\frac{q}{q-2}}-\Theta(\lambda^{\frac{p-q}{q-2}})\right).
$$
The proof of Theorem 2.2 is complete.

\section{A connection with the mass constrained problem}

It is clear that if $u_\lambda\in H^1(\mathbb R^N)$ is a ground state of $(P_\lambda)$, and for some $c>0$ there holds
\begin{equation}\label{e221}
M(\lambda)=\|u_\lambda\|_2^2=c^2,
\end{equation}
then $u_\lambda$ is a positive normalized solution of \eqref{e15} with $\mu=1$.  We denote this normalized solution by  a pair $(u_{\lambda_c},\lambda_c)$. 
In what follows, we always assume $\mu=1$ in \eqref{e15}.  As a consequence of Theorem 2.1, we have the following

{\bf Proposition 6.1.}  {\em Let $p=2^*$, $2<q<2^*$ and $bS^2<1$ if $N=4$. Then the following statements hold true:

If $N=4$ and $q\in (3,4)$, or $N=3$ and $q\in (4,6)$,  then for any  $c>0$ the problem \eqref{e15} has at least one positive normalized solution $(u_{\lambda_c},\lambda_c)$ with $\lambda_c>0$, $\lim_{c\to 0}\lambda_c=\infty$ and $\lim_{c\to \infty}\lambda_c=0$. If $N=3$ and $q\in (10/3,4]$, then there exists a constant $c_1>0$ such that for any $c>c_1$,  the problem \eqref{e15} has at least one positive normalized solution $(u_{\lambda_c},\lambda_c)$ with $\lambda_c>0$ and  $\lim_{c\to \infty}\lambda_c=0$. 

If $N=4$ and $q\in (2,3)$,  then there exists $c_1>0$ such that for any  $c\in (0, c_1)$ the problem \eqref{e15} has at least two positive normalized solutions $(u_{\lambda_c^{(i)}},\lambda_c^{(i)})$ with $\lambda_c^{(i)}>0, i=1,2,$ $\lim_{c\to 0}\lambda_c^{(1)}=0$ and $\lim_{c\to 0}\lambda_c^{(2)}=+\infty$. 
If $N=3$ and $q\in (2,10/3)$, then there exists $c_1>0$ such that for any  $c\in (0,c_1)$ the problem \eqref{e15} has at least one  positive  normalized solution 
 $(u_{\lambda_c},\lambda_c)$ with $\lambda_c>0$ and $\lim_{c\to 0}\lambda_c=0$.

Furthermore,  if $q\not=2+\frac{4}{N}$ and $\lim \lambda_c=0$, then $\lim c^{\frac{1}{4-N(q-2)}}=0$ and  
$$
\lambda_c\simeq \left(\frac{2q}{2N-q(N-2)}a^{-\frac{N}{2}}S_q^{-\frac{q}{q-2}}c^2\right)^{\frac{2(q-2)}{4-N(q-2)}},
$$
$$
u_{\lambda_c}(0)\simeq V_0(0)\left(\frac{2q}{2N-q(N-2)}a^{-\frac{N}{2}}S_q^{-\frac{q}{q-2}}c^2\right)^{\frac{2}{4-N(q-2)}},
$$
$$
\|\nabla u_{\lambda_c}\|_2^2\simeq \frac{N(q-2)}{2q}a^{\frac{N-2}{2}}S_q^{\frac{q}{q-2}}\left(\frac{2q}{2N-q(N-2)}a^{-\frac{N}{2}}S_q^{-\frac{q}{q-2}}c^2\right)^{\frac{2N-q(N-2)}{4-N(q-2)}},
$$
$$
\|u_{\lambda_c}\|_{2^*}^{2^*}\simeq a^{\frac{N}{2}}\|V_0\|_{2^*}^{2^*}\left(\frac{2q}{2N-q(N-2)}a^{-\frac{N}{2}}S_q^{-\frac{q}{q-2}}c^2\right)^{\frac{N(2N-2q(N-2))}{(N-2)(4-N(q-2))}},
$$
$$
\|u_{\lambda_c}\|_q^q\simeq a^{\frac{N}{2}}S_q^{\frac{q}{q-2}}\left(\frac{2q}{2N-q(N-2)}a^{-\frac{N}{2}}S_q^{-\frac{q}{q-2}}c^2\right)^{\frac{2N-q(N-2)}{4-N(q-2)}}.
$$
If $N=4$, $q\in (2,4)$ and $\lim\lambda_c=\infty$, then $\lim c=0$,  and 
$$
\lambda_c(\ln \lambda_c)^{\frac{4-q}{2}}\sim \frac{1}{c^{q-2}},
\quad 
u_{\lambda_c}(0)\sim \frac{1}{c^2}\ln \lambda_c,
$$
$$
\|\nabla u_{\lambda_c}\|_2^2=\frac{aS^2}{1-bS^2}-\Theta(c^{4-q}(\ln \lambda_c)^{-\frac{4-q}{2}}),
$$
$$
\|u_{\lambda_c}\|_{2^*}^{2^*}=\frac{a^2S^2}{(1-bS^2)^2}+O(c^{4-q}(\ln \lambda_c)^{-\frac{4-q}{2}}),
\quad
\|u_{\lambda_c}\|_q^q\sim c^{4-q}(\ln \lambda_c)^{-\frac{4-q}{2}},
$$
if $N=3$, $q\in (4,6)$ and $\lim \lambda_c=\infty$, then $\lim c=0$, and 
$$
\lambda_c\sim c^{-\frac{q-4}{q-2}},
\quad 
u_{\lambda_c}(0)\sim c^{-\frac{1}{2(q-2)}},
$$
$$
\|\nabla u_{\lambda_c}\|_2^2=\frac{bS^3+S^{\frac{3}{2}}\sqrt{b^2S^3+4a}}{2}-\Theta(c^{\frac{6-q}{2(q-2)}}),
$$
$$ 
\|u_{\lambda_c}\|_{2^*}^{2^*}= \frac{1}{8}(bS^2+S^{\frac{1}{2}}\sqrt{b^2S^3+4a})^3+O(c^{\frac{6-q}{2(q-2)}}),
\quad
\|u_{\lambda_c}\|_q^q\sim c^{\frac{6-q}{2(q-2)}}.
$$
The same conclusions hold true for $(u_{\lambda_c^{(i)}},\lambda_c^{(i)}), i=1,2$.}

In what follows, we consider the problem \eqref{e15} with subcritical nonlinearity. Let 
$$
g(w)=w^{p-1}+w^{q-1}, \quad w\ge 0, 
$$
then  for $2<q\le p<6$, $g(u)$ satisfies all the assumptions (G1)-(G3) in \cite{Jeanjean-5}. For $0<\Lambda_1<\Lambda_2<+\infty$, it follows from \cite[Corollary 3.2]{Jeanjean-5} that 
$$
\mathcal{W}_{\Lambda_1}^{\Lambda_2}=\left\{w\in H^1_{rad}(\mathbb R^3):\ \text{$w$ is a nonnegetive solution  of $(P_\lambda)$ with $a=1$, $b=0$, $\lambda\in [\Lambda_1,\Lambda_2]$}\right\}
$$ 
is compact in $H^1(\mathbb R^3)$, and hence is compact in $L^2(\mathbb R^3)$ and $D^{1,2}(\mathbb R^3)$. Set
$$
\mathcal{S}_0^\lambda=\{w\in H^1_{rad}(\mathbb R^3):  \ w  \  {\rm solves} \ (P_\lambda) \ {\rm with} \ a=1, b=0 \ {\rm and} \ w>0\}.
$$
Then the map
$\rho: \cup_{\lambda\in [\Lambda_1,\Lambda_2]}\mathcal S_0^\lambda \to (0,+\infty)$ defined by $\rho(w)=\|w\|^2_2$ is compact.
Therefore, there exist positive constants $C_i=C_i(\Lambda_1,\Lambda_2), i=1,2$ and $D_i=D_i(\Lambda_1,\Lambda_2), i=1,2$ such that
$$
0<C_1\le \|w\|_2^2\le C_2<+\infty, \quad 0<D_1\le \|\nabla w\|_2^2\le D_2<+\infty, \quad \forall w\in \cup_{\lambda\in [\Lambda_1,\Lambda_2]}\mathcal S_0^\lambda.
$$
Set 
$$
\mathcal{S}_b^\lambda=\{u\in H^1_{rad}(\mathbb R^3):  \ u  \  {\rm solves} \ (P_\lambda)  \ {\rm and} \ u>0\}.
$$
Then there exists an one-to-one correspondence through the rescaling 
\begin{equation}\label{e225+}
w(x)=u(\sqrt{\varpi}x), \quad 
\varpi=a+b\int_{\mathbb R^3}|\nabla u|^2=a+b\varpi^{\frac{1}{2}}\int_{\mathbb R^3}|\nabla w|^2
\end{equation}
between $\mathcal S_0^\lambda$ and $\mathcal S_b^\lambda$.  Clearly, by \eqref{e225+},  we have $\|w\|_2^2=\varpi^{-\frac{3}{2}}\|u\|_2^2$ and 
$$
\varpi^{\frac{1}{2}}=\frac{b+\sqrt{b^2\|\nabla w\|_2^4+4a}}{2}.
$$
Therefore, for any $u\in \cup_{\lambda\in [\Lambda_1,\Lambda_2]}\mathcal S_b^\lambda $, we have
$$
\frac{(b+\sqrt{b^2D_2^2+4a})^3C_1}{8}\le \|u\|_2^2\le \frac{(b+\sqrt{b^2D_2^2+4a})^3C_2}{8}.
$$
Thus arguing as in \cite{Jeanjean-5},  Theorem 2.2 implies the following result concerning the existence, non-existence and exact number of normalized solutions of \eqref{e15}, and their  precise asymptotic behavior as the parameter $c$ varies.

\smallskip
\noindent
{\bf Proposition 6.2.} 
{\it  Let $2<q<p<6$, $b>0$ and 
\begin{equation}\label{e222}
m_1:=\sqrt{\frac{6-q}{2q}}a^{\frac{3}{4}}S_q^{\frac{q}{2(q-2)}}, \quad m_2:=\frac{\sqrt{27b^3(p-2)^3(6-p)}}{4p^2}S_p^{\frac{2p}{p-2}}.
\end{equation}
If $q<10/3$ and $p<14/3$, then for any  $c>0$ the problem \eqref{e15} has at least one positive normalized solution $(u_{\lambda_c},\lambda_c)$ with $\lambda_c>0$, $\lim_{c\to 0}\lambda_c=0$ and $\lim_{c\to \infty}\lambda_c=+\infty$. Moreover, for sufficiently small $c>0$  and for sufficiently large $c>0$, the problem \eqref{e15} has exactly one positive normalized solution.

If $10/3<q<p<14/3$, then there exists $c_1>0$ such that for any  $c>c_1$ the problem \eqref{e15} has two positive  normalized solutions $(u_{\lambda_c^{(i)}},\lambda_c^{(i)})$ with $\lambda_c^{(i)}>0, i=1,2,$ $\lim_{c\to \infty}\lambda_c^{(1)}=0$ and $\lim_{c\to \infty}\lambda_c^{(2)}=+\infty$. 
 Moreover, if  $c>0$ is sufficiently large, the problem \eqref{e15} has exactly two positive normalized solutions,  and  if  $c>0$ is sufficiently small,   the problem \eqref{e15} has no normalized solution.

If $q<10/3$ and $p>14/3$, then there exists $c_1>0$ such that for any  $c\in (0, c_1)$ the problem \eqref{e15} has two positive normalized solutions $(u_{\lambda_c^{(i)}},\lambda_c^{(i)})$ with $\lambda_c^{(i)}>0, i=1,2,$ $\lim_{c\to 0}\lambda_c^{(1)}=0$ and $\lim_{c\to 0}\lambda_c^{(2)}=+\infty$. 
Moreover, if  $c>0$ is sufficiently small,  the problem \eqref{e15} has exactly two positive normalized solutions, and   if   $c>0$ is sufficiently large,  the problem \eqref{e15} has no normalized solution.

If $q>10/3$ and  $p>14/3$,  then for any  $c>0$ the problem \eqref{e15} has at least one positive normalized solution  $(u_{\lambda_c},\lambda_c)$ with $\lambda_c>0$, $\lim_{c\to 0}\lambda_c=+\infty$ and $\lim_{c\to \infty}\lambda_c=0$.  Moreover, for  sufficiently small $c>0$  and for sufficiently  large $c>0$, the problem \eqref{e15} has exactly one positive normalized solution.

If $q=10/3$ and  $p<14/3$,  then there exists positive number $c_1\in (0,m_1)$ such that  for 
any $c\in (c_1,m_1)$, the problem \eqref{e15} has at least  two normalized solutions $(u_{\lambda_c^{(i)}},\lambda_c^{(i)})$ with $\lambda_c^{(i)}>0,$ 
$\lim_{c\to m_1}\lambda_c^{(1)}=0$ and $\lim_{c\to  m_1}\lambda_c^{(2)}>0$,  and for any $c\ge m_1$, the problem \eqref{e15}  has at least one positive  normalized solution  $(u_{\lambda_c},\lambda_c)$ with $\lambda_c>0$ and $\lim_{c\to \infty}\lambda_c=+\infty$.   Moreover, if $c>0$ is sufficiently large, the problem \eqref{e15} has exactly one positive normalized solution,  and if  $c>0$ is sufficiently small,  the problem \eqref{e15}   has  no normalized solution.

If $q=10/3$ and  $p>14/3$,  then there exists a positive number $c_1\ge m_1$ such that  for 
any $c\in (0,c_1)$, the problem \eqref{e15} has at least  one positive normalized solution $(u_{\lambda_c},\lambda_c)$ with $\lambda_c>0$, $\lim_{c\to m_1}\lambda_c=0$ and $\lim_{c\to 0}\lambda_c=+\infty$. 
Moreover, if  $c>0$ is sufficiently small, the problem \eqref{e15} has exactly one positive normalized solution, and  if  $c>0$ is sufficiently large, the problem \eqref{e15} has  no normalized solution.

If $q<10/3$ and  $p=14/3$,  then there exists   $c_1\ge m_2$ such that  for 
any $c\in (0,c_1)$, the problem \eqref{e15} has at least  one positive  normalized solution $(u_{\lambda_c},\lambda_c)$ with $\lambda_c>0,$ $\lim_{c\to 0 }\lambda_c=0$ and $\lim_{c\to m_2}\lambda_c=+\infty$.  
Moreover, if  $c>0$ is sufficiently small, the problem \eqref{e15} has exactly one positive normalized solution, and  if $c>0$  is sufficiently large, the problem \eqref{e15}  has no normalized solution.

If $q>10/3$ and  $p=14/3$,  then there exists a positive number $c_1\le m_2$ such that  for 
any $c>c_1$, the problem \eqref{e15} has  at least one positive normalized solution  $(u_{\lambda_c},\lambda_c)$ with $\lambda_c>0$, $\lim_{c\to m_2}\lambda_c=+\infty$ and $\lim_{c\to\infty}\lambda_c=0$.
  Moreover, if $c>0$ is sufficiently large, the problem \eqref{e15} has exactly one positive normalized solution, and  if  $c>0$ is sufficiently small, the problem \eqref{e15}  has  no normalized solution.  
  
 Furthermore,
 if $q\not=\frac{10}{3}$ and  $\lim\lambda_c=0$, then $\lim c^{\frac{1}{10-3q}}=0$, and
$$
\lambda_c\simeq
\left(\frac{2q}{6-q}\right)^{\frac{2(q-2)}{10-3q}}a^{-\frac{3(q-2)}{10-3q}}S_q^{-\frac{2q}{10-3q}}c^{\frac{4(q-2)}{10-3q}},
$$
$$
u_{\lambda_c}(0)\sim c^{\frac{2}{10-3q}},
$$
$$
\|\nabla u_{\lambda_c}\|_2^2
\simeq\frac{3(q-2)}{2q}\left(\frac{2q}{6-q}\right)^{\frac{6-q}{10-3q}}a^{-\frac{4}{10-3q}}S_q^{-\frac{2q}{10-3q}}c^{\frac{2(6-q)}{10-3q}},
$$
$$
\|u_{\lambda_c}\|_q^q\simeq \left(\frac{2q}{6-q}\right)^{\frac{6-q}{10-3q}}a^{-\frac{3(q-2)}{10-3q}}S_q^{-\frac{2q}{10-3q}}c^{\frac{2(6-q)}{10-3q}}.
$$
If $p\not=\frac{14}{3}$ and $\lim\lambda_c=+\infty$, then $\lim  c^{\frac{1}{14-3p}}=+\infty$, and 
$$
\lambda_c\simeq
\left(\frac{16p^4}{27b^3(p-2)^3(6-p)}\right)^{\frac{p-2}{14-3p}}S_p^{-\frac{4p}{14-3p}}c^{\frac{2(p-2)}{14-3p}},
$$
$$
u_{\lambda_c}(0)\sim c^{\frac{2}{14-3p}},
$$
$$
\|\nabla u_{\lambda_c}\|_2^2\simeq \frac{9b(p-2)^2}{4p^2}\left(\frac{16p^4}{27b^3(p-2)^3(6-p)}\right)^{\frac{6-p}{14-3p}}S_p^{-\frac{2p}{14-3p}}c^{\frac{2(6-p)}{14-3p}},
$$
$$
\| u_{\lambda_c}\|^p_p
\simeq 
\frac{27b^3(p-2)^3}{8p^3}\left(\frac{16p^4}{27b^3(p-2)^3(6-p)}\right)^{\frac{2(6-p)}{14-3p}}S_p^{-\frac{4p}{14-3p}}c^{\frac{4(6-p)}{14-3p}}.
$$
If  $q=\frac{10}{3}$ and $\lim\lambda_c=0$, then 
$$\lim c=m_1=\sqrt{\frac{6-q}{2q}}a^{\frac{3}{4}}S_q^{\frac{q}{2(q-2)}},
$$
and 
$$
\lim u_{\lambda_c}(0)=\lim \|\nabla u_{\lambda_c}\|_2^2=
\lim \|u_{\lambda_c}\|_q^q=0.
$$
If $p=\frac{14}{3}$ and $\lim\lambda_c=+\infty$, then 
$$
\lim c
=m_2=\frac{\sqrt{27b^3(p-2)^3(6-p)}}{4p^2}S_p^{\frac{2p}{p-2}},
$$
and 
$$
\lim u_{\lambda_c}(0)=
\lim \|\nabla u_{\lambda_c}\|_2^2=\lim \| u_{\lambda_c}\|^p_p
=+\infty.$$
The same conclusions hold true for $(u_{\lambda_c^{(i)}},\lambda_c^{(i)}), i=1,2$.}

\smallskip
\noindent
{\bf Remark 6.1.}  In the case that $ b=0$ and $2<q\le p<6$, Jeanjean, Zhang and Zhong \cite{Jeanjean-5} obtain the existence, non-existence and multiplicity of positive normalized solutions to $(P_\lambda)$. The authors in \cite{Jeanjean-5} also obtain some asymptotic behavior of normalized solutions as the Lagrange multiplier $\lambda\to 0$ or $\lambda\to +\infty$. In fact,  by the discussion in Section 5, a direct computation shows that if $b=0$,  $p=q\not=\frac{10}{3}$, then we have 
$$
\lambda_c=\left(\frac{p}{6-p}\right)^{\frac{2(p-2)}{10-3p}}a^{-\frac{3(p-2)}{10-3p}}(S_p/2)^{-\frac{2p}{10-3p}}c^{\frac{4(p-2)}{10-3p}},
$$
$$
\|\nabla u_{\lambda_c}\|^2_2=\frac{3(p-2)}{p}(\frac{p}{6-p})^{\frac{6-p}{10-3p}}a^{-\frac{4}{10-3p}}\left(S_p/2\right)^{-\frac{2p}{10-3p}}c^{\frac{2(6-p)}{10-3p}},
$$
$$
\|u_{\lambda_c}\|_p^p
=(\frac{p}{6-p})^{\frac{6-p}{10-3p}}a^{-\frac{3(p-2)}{10-3p}}\left(S_p/2\right)^{-\frac{2p}{10-3p}}c^{\frac{2(6-p)}{10-3p}}.
$$
For $b=0$, $q<p\not=\frac{10}{3}$ and $\lim\lambda_c=+\infty$,  then we have $\lim c^{\frac{1}{10-3p}}=+\infty$ and
$$
\lambda_c\simeq (\frac{2p}{6-p})^{\frac{2(p-2)}{10-3p}}a^{-\frac{3(p-2)}{10-3p}}S_p^{-\frac{2p}{10-3p}}c^{\frac{4(p-2)}{10-3p}},
$$
$$
\|\nabla u_{\lambda_c}\|_2^2
\simeq \frac{3(p-2)}{2p}(\frac{2p}{6-p})^{\frac{6-p}{10-3p}}a^{-\frac{4}{10-3p}}S_p^{-\frac{2p}{10-3p}}c^{\frac{2(6-q)}{10-3p}},
$$
$$
\| u_{\lambda_c}\|^p_p
\simeq (\frac{2p}{6-p})^{\frac{6-p}{10-3p}}a^{-\frac{3(p-2)}{10-3p}}S_p^{-\frac{2p}{10-3p}}c^{\frac{2(6-p)}{10-3p}}.
$$
For $b=0$, $\frac{10}{3}\not=q<p$ and $\lim\lambda_c=0$, then we have $\lim c^{\frac{1}{10-3q}}=0$ and 
$$
\lambda_c\simeq (\frac{2q}{6-q})^{\frac{2(q-2)}{10-3q}}a^{-\frac{3(q-2)}{10-3q}}S_q^{-\frac{2q}{10-3q}}c^{\frac{4(q-2)}{10-3q}},
$$
$$
\|\nabla u_{\lambda_c}\|_2^2
\simeq \frac{3(q-2)}{2q}(\frac{2q}{6-q})^{\frac{6-q}{10-3q}}a^{-\frac{4}{10-3q}}S_q^{-\frac{2q}{10-3q}}c^{\frac{2(6-q)}{10-3q}},
$$
$$
\|u_{\lambda_c}\|_q^q
\simeq (\frac{2q}{6-q})^{\frac{6-q}{10-3q}}a^{-\frac{3(q-2)}{10-3q}}S_q^{-\frac{2q}{10-3q}}c^{\frac{2(6-q)}{10-3q}}.
$$ 
We mention that Zeng et al. \cite{Zeng-2} extend the results in \cite{Jeanjean-5} to a Kirchhoff equation with general subcritical nonlinearity and obtain some results concerning the existence, non-existence and multiplicity of normalized solutions,  but the exact number  and the precise asymptotic expression of normalized solutions are not addressed there.

\smallskip
\noindent
{\bf Remark 6.2.}   
By Proposition  6.2 and Remark 6.1, we see that in the space dimension $N=3$,  there is a  striking difference between the cases $b=0$ and $b\not=0$ ( See also Figure 1 and Figure 2 ).  More precisely, if $b=0$ then $p_0=\frac{10}{3}$ plays a key role in the existence, non-existence, multiplicity and asymptotic behavior of normalized solutions of \eqref{e15}. 
However, if $b\not=0$, then both $p_0=\frac{10}{3}$ and $p_b=\frac{14}{3}$ play a role in the existence, non-existence, multiplicity and asymptotic behavior of normalized solutions of \eqref{e15}, 
which are completely different from those
for the corresponding nonlinear Schr\"odinger equation and which reveal the special influence of the nonlocal term. We mention that the difference between the Kirchhoff equations with pure power nonlinearity and nonlinear Schr\"odinger equations has also been observed by  Qi and Zou \cite{Qi-1}. But the difference between the Kirchhoff equations with combined  powers nonlinearity and nonlinear Schr\"odinger equations have not been addressed there.

\smallskip
\noindent
{\bf Remark 6.3.} 
Asymptotic behavior of $M(\lambda)$ similar to the cases depicted in Figure 3 and Figure 4 have been observed in nonlinear Sch\"odinger equations with a power nonlinearity, the cases depicted in Figure 3, Figure 4 and Figure 6 have been observed in nonlinear Schr\"odinger equations with general nonlinearity \cite{Jeanjean-5},  while the cases depicted in Figure 3, Figure 4 and Figure 5 have been observed in Kirchhoff equations with a pure power nonlinearity \cite{Qi-1}. If $q=10/3$ and  $p<14/3$,  then there exists positive number $c_1\in (0,m_1)$ such that  for 
any $c\in (c_1,m_1)$, the problem \eqref{e15} has at least  two normalized solutions,  if $c>0$ is sufficiently large, the problem \eqref{e15} has exactly one positive normalized solution,  and if  $c>0$ is sufficiently small,  the problem \eqref{e15}   has  no normalized solution.  This is new  phenomenon, which is not observed before in the literature   and which  does not shared by nonlinear Schr\"odinger equations and Kirchhoff equations with pure power nonlinearity.  Some new phenomenon is also observed in the case  $\frac{10}{3}<q<p=\frac{14}{3}$. See the diagrams of $M(\lambda)$  given below in Figure 3-Figure 8, where $m_1$ and $m_2$ are given in \eqref{e222}. 

\begin{figure}[h]
	\centering
	\includegraphics[width=0.85\linewidth]{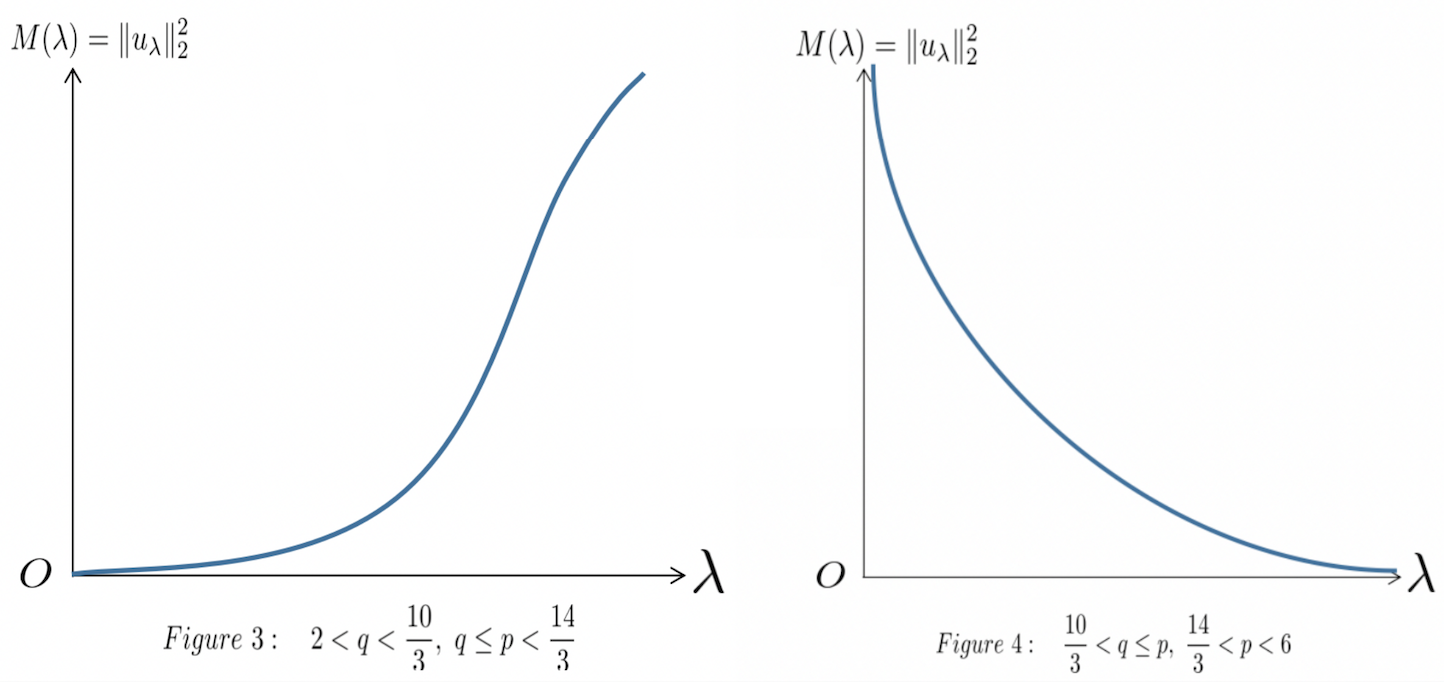}\\
	\includegraphics[width=0.85\linewidth]{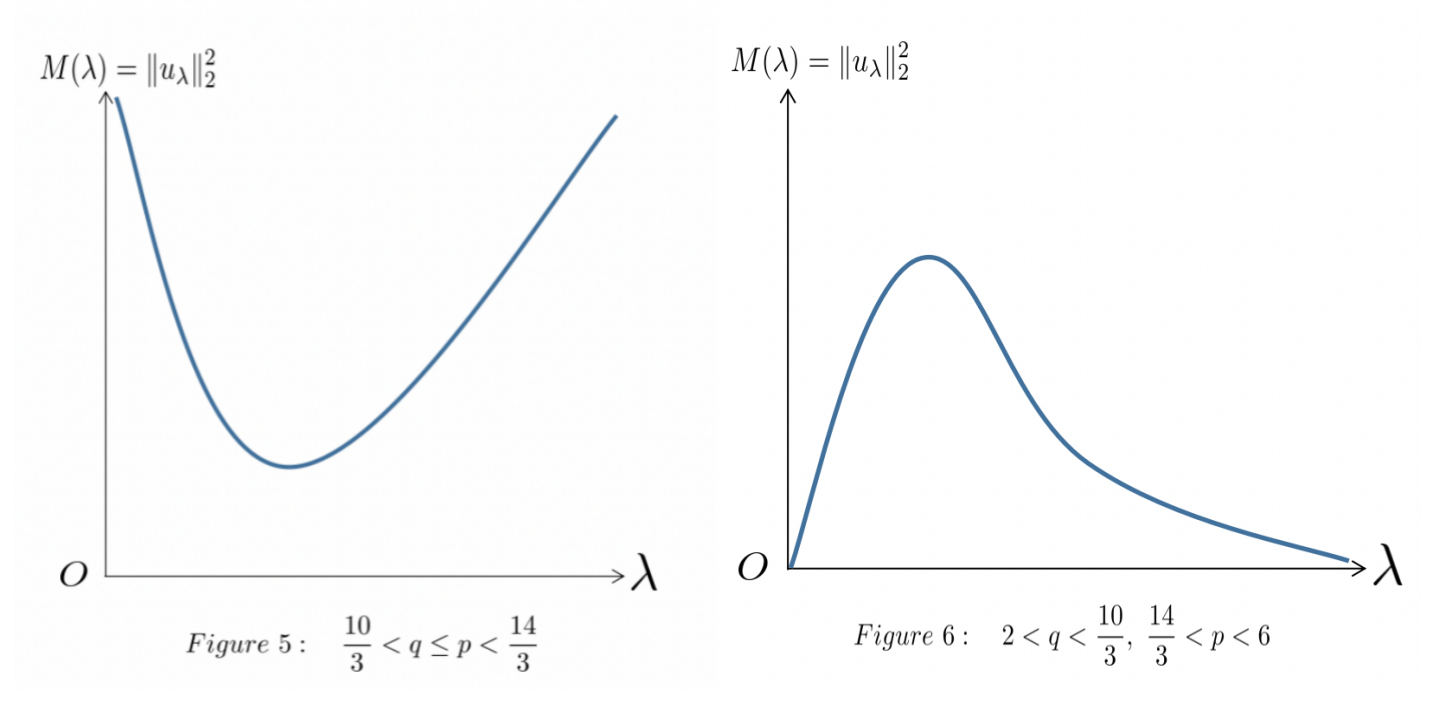}\\
	\includegraphics[width=0.85\linewidth]{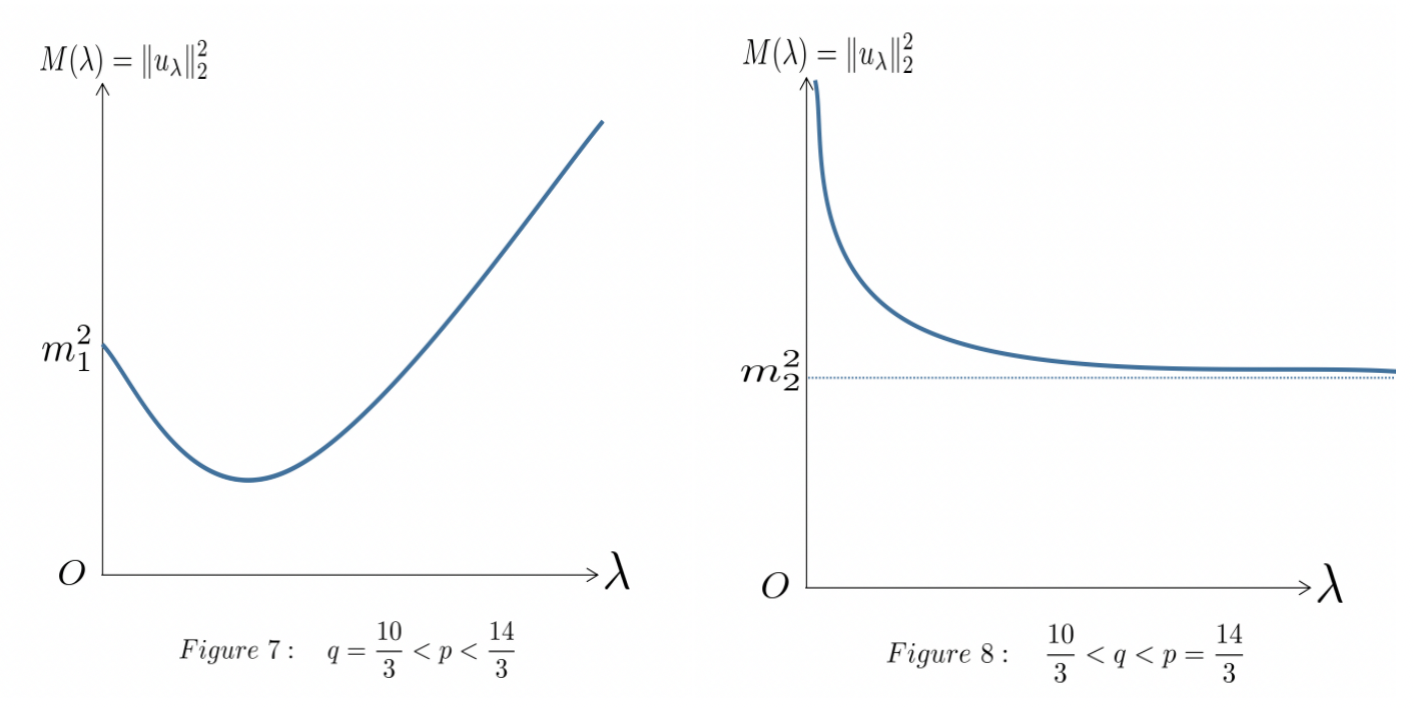}
\end{figure}

We mention that when $2<q<\frac{10}{3}, \frac{14}{3}<p<6$, a nontrivial  variation of $M(\lambda)$ can also be observed in Figure 6, which affects the existence, non-existence and multiplicity of normalized solutions of \eqref{e15}. This special behavior of $M(\lambda)$   is mainly caused by the combined nonlinearity and have been observed in nonlinear Schr\"odinger equations and Kirchhoff equations with general subcritical nonlinearity \cite{Jeanjean-5,  He-1}. We also mention that this type of behavior of $M(\lambda)$ does not appear in the Kirchhoff equations with a pure power nonlinearity \cite{Qi-1}. Typically, the asymptotic behavior of $M(\lambda)$ depicted in Figure 5 is mainly caused by the appearance 
of the nonlocal term $b\int_{\mathbb R^N}|\nabla u|^2$,  which has been reported by Qi and Zou \cite{Qi-1} as a new phenomenon for Kirchhoff equation with a pure power nonlinearity. 
While the asymptotic behaviors of $M(\lambda)$ depicted in Figure 7 and Figure 8 are mainly caused by the combined effect of the nonlocal term and the combined nonlinearity, which have not been reported before in the literature.

  Besides, the value of $b>0$ has also an effect on the existence, non-existence and the number of normalized solutions of \eqref{e15}, which can be seen from Figure 8 and the following diagrams.
  
  \begin{figure}[h]
  	\centering
  	\includegraphics[width=1.0\linewidth]{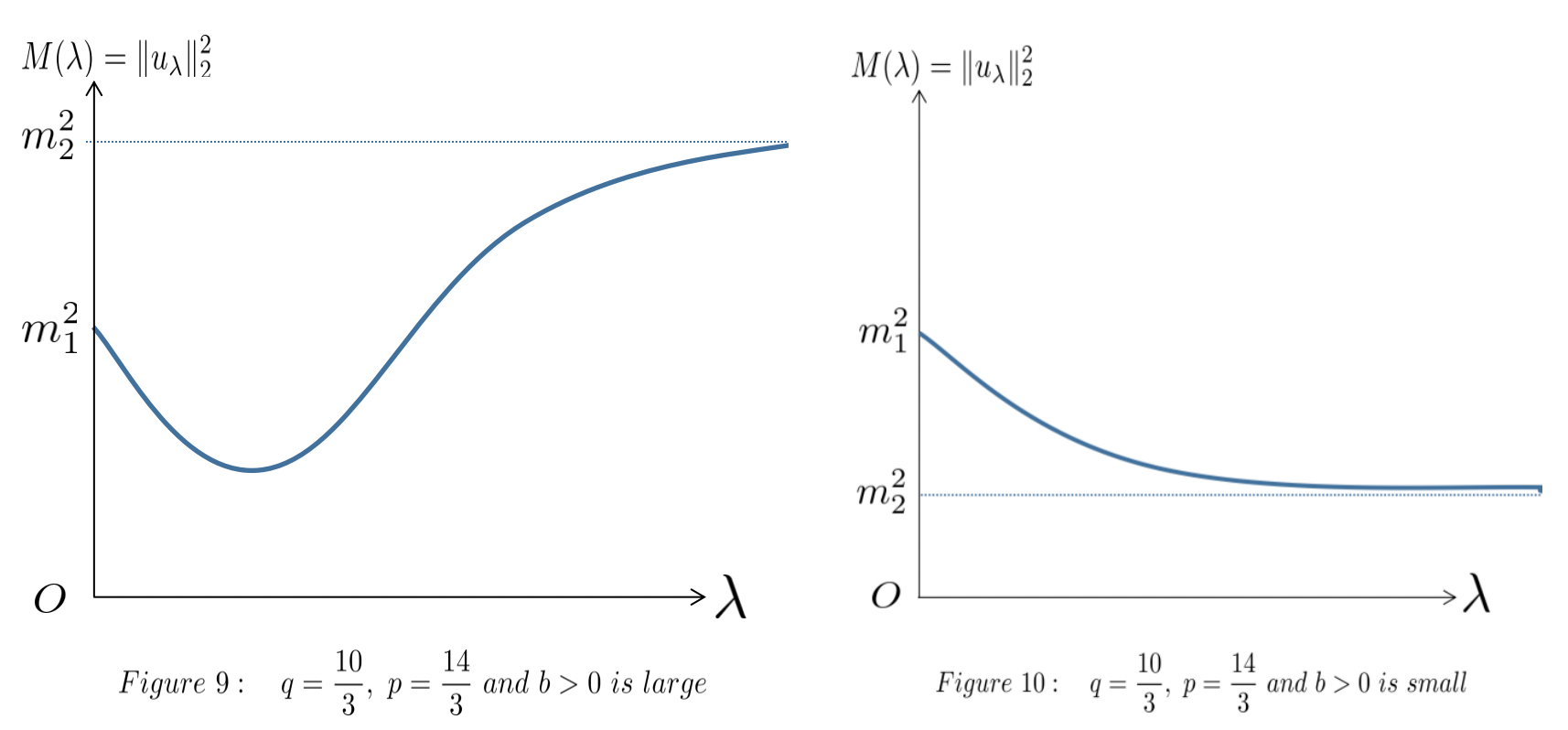}
    \end{figure}

\smallskip
{\small
\noindent {\bf Acknowledgements.} 
Part of this research was carried out while S.M. was visiting Swansea University. S.M. thanks the Department of Mathematics for its hospitality.
S.M. was supported by National Natural Science Foundation of China
(Grant Nos.11571187, 11771182)

\end{document}